\title{Norm Convergence Rate for Multivariate Quadratic Polynomials of Wigner Matrices}
\author[a,1]{Jacob Fronk}
\author[a,b,1]{Torben Kr\"{u}ger}
\author[c]{Yuriy Nemish}
\affil[a]{Department of Mathematical Sciences, University of Copenhagen}
\affil[b]{Department of Mathematics, FAU Erlangen-N\"{u}rnberg}
\affil[c]{Department of Mathematics, University of California, San Diego}
\newtheorem{theorem}{Theorem}
\newtheorem{lemma}[theorem]{Lemma}
\newtheorem{prop}[theorem]{Proposition}
\newtheorem{corollary}[theorem]{Corollary}
\newtheorem{definition}[theorem]{Definition}
\newtheorem*{assumption}{Assumption}
\newtheorem*{remark}{Remark}
\newtheorem{case}{Case}
\newcommand{\R}{\mathbb{R}}
\newcommand{\C}{\mathbb{C}}
\newcommand{\N}{\mathbb{N}}
\newcommand{\Z}{\mathbb{Z}}
\newcommand{\HH}{\mathbb{H}}
\newcommand{\dd}{\mathrm{d}}
\newcommand{\E}{\mathbb{E}}
\newcommand{\Pb}{\mathbb{P}}
\newcommand{\I}{\mathrm{i}}
\newcommand{\veps}{\varepsilon}
\newcommand{\mr}{\mathrm}
\newcommand{\mb}{\mathbf}
\newcommand{\mc}{\mathcal}
\newcommand{\IS}{\mathscr{S}}
\newcommand{\IL}{\mathscr{L}}
\newcommand{\IE}{\mathscr{E}}
\DeclareMathOperator{\Tr}{Tr}
\DeclareMathOperator{\diag}{diag}
\DeclareMathOperator{\supp}{supp}
\DeclareMathOperator{\dist}{dist}
\DeclareMathOperator{\rank}{rank}
\DeclareMathOperator{\Image}{Image}
\DeclareMathOperator{\Spec}{Spec}
\DeclareMathOperator{\Span}{span}
\renewcommand{\Im}{\operatorname{\mathrm{Im}}}
\renewcommand{\Re}{\operatorname{\mathrm{Re}}}
\renewcommand{\phi}{\varphi}
\begin{document}

\maketitle

\footnotetext[1]{\hspace{0.15cm} Partially supported by VILLUM FONDEN research grant no. 29369\newline  E-mail addresses: \href{mailto:jf@math.ku.dk}{jf@math.ku.dk} (J. Fronk), \href{mailto:torben.krueger@fau.de}{torben.krueger@fau.de} (T. Kr\"{u}ger), \href{mailto:ynemish@ucsd.edu}{ynemish@ucsd.edu} (Yu. Nemish)}

\begin{abstract}
    We study Hermitian non-commutative quadratic polynomials of multiple independent Wigner matrices. We prove that, with the exception of some specific reducible cases, the limiting spectral density of the polynomials always has a square root growth at its edges and prove an optimal local law around these edges. Combining these two results, we establish that, as the dimension $N$ of the matrices grows to infinity, the operator norm of such polynomials $q$ converges to a deterministic limit with a rate of convergence of $N^{-2/3+o(1)}$. Here, the exponent in the rate of convergence is optimal.
    For the specific reducible cases, we also provide a classification of all possible edge behaviours.
\end{abstract}
\noindent \emph{Keywords: polynomials of random matrices, local laws, Dyson equation, extreme eigenvalues} \\
\textbf{AMS Subject Classification: 60B20, 15B52}

\section{Introduction}
The empirical spectral distribution of a random matrix is typically well approximated by a deterministic measure as its dimension grows to infinity. A clear contender for the most famous example of such a convergence is the celebrated semi-circle law. It states that the spectral measure of a Wigner matrix, a Hermitian $N \times N$-matrix $\mb X$ with centered i.i.d. entries $x_{ij}$ above the diagonal and $\E |x_{ij}|^2= N^{-1}$,  converges to the semi-circle distribution, supported on the interval $[-2,2]$ \cite{Wigner1955}. In particular, the largest and smallest eigenvalues of $\mb X$ converge to the respective edges of the support, implying the convergence $\| \mb X\| \to 2$ of the operator norm, provided the fourth moments of the entries of $\sqrt{N}{\mb X}$ are finite \cite{Bai1988, Bai2010Spectral}. 

For non-commutative Hermitian polynomials $\mb Q=q(\mb X_1, \dots, \mb X_l)$ in several independent  Wigner matrices $\mb X_i$ an analogous statement holds.
 In this setup the limit of the eigenvalue distribution equals the distribution associated to the polynomial $\mathfrak{q}=q(\mathfrak{s}_1, \dots, \mathfrak{s}_l)$, where the matrices are replaced by free semicircular random variables within a non-commutative probability space, i.e. $\mathfrak{s}_i$ can be interpreted as operators acting on an infinite dimensional Hilbert space. This result was first established for  Gaussian random matrices in \cite{Voiculescu91Limit} and extended to Wigner matrices in \cite{Dykema1993Certain}. Similarly, the convergence of the norms $\| \mb Q\| \to \|\mathfrak{q}\|$ was first shown by Haagerup-Thorbj{\o}rnsen \cite{haagerup2005} in the Gaussian case and the Wigner case was proven by Anderson in \cite{Anderson2013Convergence}. Such results have also been shown when some  $\mb X_i$ are replaced by non-random matrices \cite{Belinschi2017Spectral, Male2012Norm}. For non-Hermitian polynomials convergence of the spectral measure to the limiting Brown measure, predicted by free probability theory, is known only for very specific cases, e.g. for products of random matrices \cite{Goetze2011Asymptotic, Orourke2011Products} and for quadratic polynomials \cite{cook2020spectrum}. 
 
Determining the limiting spectral measure $\rho$ of Hermitian polynomials $\mb Q$, or equivalently the distribution of $\mathfrak{q}$, becomes a nontrivial task beyond particular computable cases. Several works have been devoted to the analysis of the regularity properties of $\rho$.  It has been shown that $\rho$ has a single interval support \cite{Haagerup2006} and does not contain any atoms \cite{Mai2017Absence, Shlyakhtenko_2015}. The cumulative distribution function of $\rho$ is H\"older-continuous \cite{Banna2020Hoelder}, and if $q$ is a monomial or a homogeneous quadratic polynomial then $\rho$ is even absolutely continuous \cite{Charlesworth2016Free,erdos2019local}.

In the present paper we consider the case when $q$ is a general polynomial of degree two, i.e. we study self-adjoint polynomials in $l$ independent Wigner matrices $ \mb X_1,\ldots,\mb X_l$  of the form
\begin{equation}
q(\mb X_1,\ldots,\mb X_l)=\sum_{i,j=1}^l\mb X_iA_{ij}\mb X_j\mb + \sum_{i=1}^l b_i\mb X_i+c,
\label{eq polynomial}
\end{equation}
where $0\neq A=(A_{ij})\in\C^{l\times l}$ is a Hermitian matrix, $b=(b_i)\in\R^l$ and $c\in\R$. For these polynomials we classify the edge behaviour of $\rho$ and show (see Proposition~\ref{prop square root} below) that  at both edges the limiting spectral measure is absolutely continuous and apart from specific reducible cases its density exhibits a square root growth. The reducible cases are, up to a shift and change in sign, of the form $ \mb Y^*\mb Y$, where $\mb Y$ is an affine combination of the underlying Wigner matrices. Such polynomials still have a square root edge at the rightmost point of the spectrum, but  have a density blow-up at  the leftmost point if it is equal to zero.  All these cases are classified in Proposition~\ref{prop edge} below.

The square root growth of the limiting spectral distribution is a well-known phenomenon, that is already present in the semicircle law for Wigner matrices $\mb X$. In this setup the rate of convergence for the norm of $\mb X$ is  $\|\mb X\| = 2+\mc O(N^{-2/3+o(1)})$ with very high probability \cite{Erdos_2013Local} and several tail estimates have been established \cite{Augeri2016, Augeri2021, Erdos2023smalldeviation}. In fact, for Wigner matrices the  distribution of  the largest eigenvalue is known to be universal and given by the Tracy-Widom law \cite{ERDOSRigidity2012, Soshnikov1999Universality, Tao2010Random}. This distribution was first identified by Tracy and Widom for the Gaussian ensembles \cite{Tracy_1994, Tracy_1996} and necessary and sufficient conditions for its universality in the context of Wigner matrices identified  in \cite{Lee2014Necessary}. 
 
Edge universality has been extended to many other Hermitian random matrix models, including invariant ensembles \cite{Bourgade2014}, covariance matrices \cite{Feldheim2010, Pillai2014}, deformed Wigner matrices \cite{Lee2015}, deterministic matrices with Gaussian perturbations \cite{Landon2017} and  models with correlations \cite{Alt_2020edge, Pillai_2012}. Such universality results often rely on control of the  eigenvalue location on  mesoscopic scales between $O(1)$ and $O(N^{-1})$, i.e. on a local law.

Local laws arose in the context of Wigner matrices \cite{Erdos_2013Local, Tao13} and have subsequently been extended to the more complex models listed above. For non-commuting polynomials of several random matrices local laws are known only in specific cases, starting with Anderson's work on the anti-commutator $\mb X_1 \mb X_2 + \mb X_2 \mb X_1$ of Wigner matrices \cite{Anderson_2015} that controls the deviation of bulk eigenvalues from their expected position on the scale $\mc O(N^{-1/2})$. For general polynomials, that satisfy certain checkable conditions, an optimal  local law in the bulk regime was proved in \cite{erdos2019local}. Related results are \cite{Bao2017} and \cite{Bao2020}, where  it was shown that for two random matrices satisfying a local law and whose eigenvectors are in generic direction to each other also their sum satisfies a local law at the edge and in the bulk. This result covers e.g. $\mb X_1^2+ \mb X_2^3$ if one of these matrices is a Gaussian unitary ensemble. For  non-Hermitian polynomials the results \cite{Nemish2017Local} and \cite{Goetze2020Local} cover products of independent matrices with i.i.d. entries. and \cite{cook2020spectrum} quadratic polynomials.  

Currently the best estimate on the convergence rate of the norm for general polynomials of GUE matrices is $-N^{-\veps}\le \|\mb Q\| -\|\mathfrak{q}\| \le CN^{-1/4}$, for some $\veps<\frac{1}{3}$ and $C>0$ established in \cite{Collins2022Operator}.  Our  work improves this bound for polynomials $\mb Q =q(\mb X_1,\ldots,\mb X_l)$ of the form \eqref{eq polynomial} to the optimal rate of $N^{-2/3+o(1)}$ with square root growth at the edges of the spectral density and extends the result to Wigner matrices. The main novelty here is a detailed analysis of the Dyson equation, describing a generalized resolvent of the linearization matrix associated with the polynomial in the limit $N \to \infty$ and, consequently, the resolvent of $\mb Q$ itself. The idea of linearising polynomials of random matrices in this way stems from \cite{Haagerup2006, haagerup2005} and has been used in many works since, in particular in \cite{Anderson2013Convergence, Anderson2010Book, Belinschi2017, HELTON2018, HELTON2006}. In particular, we perform a comprehensive stability analysis of the Dyson equation that allows us (i) to prove a square root growth of the limiting spectral density $\rho$ and (ii) to establish an optimal bound on the difference between the solution to the Dyson equation and the generalized resolvent by using a modification of the bound on the random error matrix in the Dyson equation from \cite{ERDOS_2019SCD}. The main insight is that  the matrix  Dyson equation for the linearization, which has a linear self-energy term, can be reduced to a scalar equation for a function $m=m(z)$ of the form 
 \begin{equation}
            -\frac{1}{m}=z+\gamma(m),
\end{equation}
where the self-energy term $\gamma(m)$ is now a non-linear function of $m$. This representation allows us to identify the values of the spectral edges in terms of the coefficients of $q$ and study the quadratic singularity  at these edge points.

\subsection*{Notations\label{Sec notations}}

In this section, we introduce some definitions commonly used throughout the paper. The standard scalar product of vectors $v,w\in\C^n$ will be denoted by $\langle  v, w\rangle$ and the standard euclidean norm by $\Vert  v\Vert=\sqrt{\langle  v, v\rangle}$. A vector $ v$ is called normalized if $\Vert  v\Vert=1$.

Matrices $R\in\C^{k\times n}$ for (fixed) $k,n\in\N$ are usually denoted by non-boldfaced roman letters and matrices $\mb R\in\C^{kN\times nN}$ are usually denoted by boldfaced roman letters. In particular, we denote the identity matrix on $\C^{k\times k}$ by $I_k$ and the identity matrix on $\C^{kN\times kN}$ by $\mb I_{kN}$. 
For any $k,n\in\N$ we embed $\C^{k\times n}$ in $\C^{k\times n}\otimes\C^{N\times N}$ by identifying $R\in\C^{k\times n}$ with $R\otimes \mb I_N\in\C^{k\times n}\otimes\C^{N\times N}\cong\C^{kN\times nN}$ and write compactly 
    \begin{equation}
        R=R\otimes \mb I_N\in\C^{(l+1)N\times(l+1)N}.
        \label{eq embedding}
    \end{equation}
Matrices $R\in\C^{k\times n}$, which are embedded into $\C^{kN\times nN}$, get still denoted by non-boldfaced letters.

For $R,T\in\C^{n\times n}$ we denote the normalized trace by
$\langle R\rangle=\frac1n\Tr R$ and define a scalar product by $\langle R,T\rangle:=\langle R^* T\rangle$. We use the standard operator norm and the Hilbert-Schmidt norm, which are given by 
    \begin{equation}
        \Vert R\Vert=\sup_{\Vert x\Vert\leq1}\Vert R x\Vert
        \quad
        \text{and}
        \quad
        \Vert R\Vert_{\mr{hs}}=\sqrt{\langle R^*R\rangle}.
    \end{equation}
for vectors of matrices $V=(V_i)_{i\in\llbracket l\rrbracket}\in(\C^{n\times n})^l$ we denote by $\Vert V\Vert$ the maximum of the operator norms of the entries, i.e.
    \begin{equation}
     \Vert V\Vert=\max_{i\in\llbracket l\rrbracket}\Vert V_i\Vert.  
    \end{equation}
For random matrices $\mb R\in\C^{kN\times kN}$  the isotropic and averaged p-norms are defined by
    \begin{equation}
    \Vert \mb R\Vert_p:= \sup_{\Vert \mb x\Vert,\Vert\mb y\Vert\leq1}\left(\E|\langle\mb x,\mb R\mb y\rangle|^p\right)^{\frac1p}
    \quad
    \mr{and}
    \quad
    \Vert\mb R\Vert^{\mr{av}}_p:= \sup_{\Vert \mb B\Vert\leq1}\left(\E|\langle \mb B\mb R\rangle|^p\right)^{\frac1p}.
    \end{equation}
For a block matrix $\mb R\in\C^{kN\times nN}$ with blocks $\mb R_{ij}\in\C^{N\times N}$, $i\in\llbracket k\rrbracket$ and $j\in\llbracket n\rrbracket$, we define the blockwise (averaged) trace $\underline{\mb R}\in\C^{k\times n}$ by
    \begin{equation}
        \underline{\mb R}_{ij}=\langle\mb R_{ij}\rangle.
        \label{underline op}
    \end{equation}
A matrix $R$ is said to be positive definite if $\langle  v,R v\rangle>0$ for all $ v\in\C^{n\times n}\setminus\{0\}$ and we write $R>0.$ It is called positive semi-definite if $\langle  v,R v\rangle\geq0$ for all $ v\in\C^{n\times n}\setminus\{0\}$ and we write $R\geq0$. $R$ is called negative (semi-)definite, denoted by $R<0$ ($R\leq0$), if $-R$ is positive (semi)-definite. For $S\in\C^{n\times n}$ Hermitian we write $S\geq R$ if $S-R\geq0$ and $S>R$ if $S-R>0$.

For linear operators acting on matrix spaces, we denote by $\Vert\cdot\Vert_{\mr{sp}}$ the norm induced by the Hilbert-Schmidt norm, $\Vert\cdot\Vert_{\mr{hs}}.$ The identity map between matrix spaces is denoted by $\mathbb 1$, i.e. $\mathbb 1[R]=R$ for all $R\in\C^{k\times n}$.

The upper half-plane will be denoted by $\HH$, i.e. 
    \begin{equation}
        \HH=\{z\in\C: \Im z>0\}
    \end{equation}
and $\llbracket n\rrbracket:=\{1,2,\ldots, n\}$ is used for the natural numbers up to $n$. If $X$ and $Y$ are positive quantities, we use the notation $X\lesssim Y$ if there is some constant $c>0$ such that $X\leq cY$. The constant will in general depend on the coefficients of $q$. If it also depends on some other parameters $\alpha$, we write $X\lesssim_\alpha Y$. In particular, the constant will never depend on the dimension of our random matrices, $N$. If both $X\lesssim Y$ and $Y\lesssim X$ hold true , we write $X\sim Y$.

\subsection*{Acknowledgements}
We thank Peter Henning Thomsen Krarup for his valuable contributions in the early stages of the project.

\section{Main results \label{Sec main results}}
\begin{assumption}
Let $\zeta_0$ be a real-valued and $\zeta_1$ be a complex-valued random variable and let $\zeta_0$ and $\zeta_1$ be independent. For $i=0,1$, they are to satisfy
\begin{equation}
    \E[\zeta_i]=0,\quad
    \E[|\zeta_i|^2]=1\quad
    \text{and}
    \quad
    \E[|\zeta_i|^p]\leq C_p
    \label{distribution assumption}
\end{equation}
for all $p\in \N$ and some constants $C_p>0$, depending on $p$. Let $\mb W\in\C^{N\times N}$ be a Hermitian random matrix characterized by its entry distribution:
\begin{enumerate}
    \item The diagonal entries $\{w_{ii}:\,i\in\llbracket N\rrbracket\}$ and off-diagonal entries $\{(w_{ij},w_{ji}):\,i,j\in\llbracket N\rrbracket,\,i<j \}$ are independent;
    \item $\{w_{ii}:\,i\in\llbracket N\rrbracket\}$ consists of independent copies of $\frac{1}{\sqrt{N}}\zeta_0$,
    \item  $\{(w_{ij},w_{ji}):\,i,j\in\llbracket N\rrbracket,\,i<j \}$ consists of independent copies of $\frac{1}{\sqrt{N}}(\zeta_1,\bar{\zeta}_1)$.
\end{enumerate}
For a fixed $l\in\N$ we define $\mb X=(\mb X_i)_{i\in\llbracket l\rrbracket}\in(\C^{N\times N})^l$, a vector of random matrices, where each $\mb X_i$, $i\in\llbracket l\rrbracket$, is an independent copy of $\mb W$.
\end{assumption}

To present our results we first need to distinguish between shifted reducible and non-reducible quadratic polynomials.

\begin{definition}[Shifted reducible and non-reducible second degree polynomial]
    We call any non-commutative quadratic polynomial of the matrices $\mb X=(\mb X_i)_{i\in\llbracket l\rrbracket}$ which is of the form 
    \begin{equation}
        q_r(\mb X)=\alpha(v^*\mb X-\xi)(v^*\mb X-\xi)^*-\beta
        \label{eq shifted red}
    \end{equation}
    for some $\alpha,\beta,\xi\in\R$, $\alpha\neq0$, $\xi\geq0$ and $v\in\C^l$ with $\Vert v\Vert=1$ a \textbf{shifted reducible quadratic polynomial}.
    Any polynomial not of this form is called a \textbf{non-reducible quadratic polynomial}.
\end{definition}
\begin{remark}
    Shifted reducible quadratic polynomials are exactly the polynomials of the form \eqref{eq polynomial} with coefficients $A=\alpha  v v^*$, $ b=-\alpha\xi( v+\bar{ v})$ and $c=\alpha|\xi|^2-\beta$ for some $\alpha,\beta,\xi\in\R$ with $\alpha\neq0$, $\xi\geq0$ and normalised $v\in\C^l$. Note that our definition of a shifted reducible polynomial also allows for polynomials of the form \eqref{eq shifted red} with $\xi\in\C$ since $(v^*\mb X-\xi)(v^*\mb X-\xi)^*=(e^{\I\phi}v^*\mb X-e^{\I\phi}\xi)(e^{\I\phi}v^*\mb X-e^{\I\phi}\xi)^*$ for all $\phi\in\R$ and we used this invariance to restrict to $\xi\in\R_{\geq0}$ w.l.o.g..
\end{remark}
Shifted reducible polynomials are those where the edge characteristics reduce to understanding the singular value statistics of some (not necessarily Hermitian) first order polynomial in $\mb X$, whereas non-reducible polynomials are those where such a simplification is not possible. The main focus of this work are non-reducible polynomials, but we also characterize the limiting spectral measure of the shifted reducible polynomials.

For non-reducible polynomials, we prove the convergence of the norm of $q(\mb X)$ to a deterministic $\tau_*$ in the following sense:

 \begin{theorem}[Convergence of the matrix norm]
Let $q$ be a non-reducible quadratic polynomial of the form \eqref{eq polynomial}. There is a deterministic $\tau_*>0$, only depending on the coefficients $A, b,c$ of $q$, such that for all $\veps,D>0$ the operator norm of $q(\mb X)$ satisfies the estimate
\begin{equation*}
\Pb\left(|\Vert q(\mb X)\Vert-\tau_*|\geq N^{-\frac{2}{3}+\veps}\right)\leq C_{\veps,D} N^{-D}.
\end{equation*}
\label{theorem norm}
\end{theorem}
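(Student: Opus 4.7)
The plan is to deduce the theorem from an optimal local law for the resolvent of a Hermitian linearization of $q(\mathbf{X})$ by the standard route. First I would construct a Hermitian linearization $\mathbf{L}(z)$, of size $(l+1)N\times(l+1)N$, that is affine in $z$ and in the matrices $\mathbf{X}_1,\ldots,\mathbf{X}_l$ and whose inverse $\mathbf{G}(z)=\mathbf{L}(z)^{-1}$ encodes the resolvent $(q(\mathbf{X})-z)^{-1}$ in a designated block. Such a linearization is standard for a quadratic polynomial and goes back to \cite{Haagerup2006, haagerup2005}; it was used in the related works \cite{Anderson2013Convergence, erdos2019local}.

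Next I would derive the matrix Dyson equation for the deterministic approximation $\mathbf{M}(z)\approx\mathbb{E}\mathbf{G}(z)$, whose self-energy operator is linear in $\mathbf{M}$, and reduce it, as advertised in the introduction, to a scalar self-consistent equation $-1/m(z)=z+\gamma(m(z))$ with a nonlinear $\gamma$ determined by the coefficients $A,b,c$. Proposition~\ref{prop square root} then provides the square-root behavior of the limiting density $\rho$ at both edges $\pm\tau_*$ of its support, identifying the candidate limiting value $\tau_*$ of the theorem.

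The core technical step is an edge local law for the linearization. I would perform a stability analysis of the scalar Dyson equation near $\tau_*$: expanding $\gamma$ around the edge value of $m$ produces a cubic equation for $m(z)-m(\tau_*)$ in the small parameter $z-\tau_*$, which is the generic mechanism behind a square-root edge. Lifting this stability back to the matrix Dyson equation and combining it with a high-moment bound on the random error $\mathbf{D}=\mathbf{I}-\mathbf{L}\mathbf{G}$, adapting the methods of \cite{ERDOS_2019SCD} to our linear-in-$\mathbf{X}$ setting, yields isotropic and averaged bounds of the form
\begin{equation*}
|\langle \mathbf{x},(\mathbf{G}(z)-\mathbf{M}(z))\mathbf{y}\rangle|\leq \sqrt{\tfrac{N^{\varepsilon}\,\Im m(z)}{N\eta}},
\qquad
|\langle \mathbf{B}(\mathbf{G}(z)-\mathbf{M}(z))\rangle|\leq \tfrac{N^{\varepsilon}}{N\eta},
\end{equation*}
uniformly on $\{z=E+\mathrm{i}\eta:\,|E|\leq\tau_*+N^{-2/3+\varepsilon},\,\eta\geq N^{-2/3-\varepsilon}\}$, with very high probability.

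Finally, extracting the normalized trace of $(q(\mathbf{X})-z)^{-1}$ from the appropriate block of $\mathbf{G}(z)$ and running the standard argument (controlling the imaginary part of the resolvent at spectral parameters slightly outside the support), the edge local law implies that with overwhelming probability all eigenvalues of $q(\mathbf{X})$ lie within $N^{-2/3+\varepsilon}$ of $\mathrm{supp}\,\rho=[-\tau_*,\tau_*]$, while the square-root density at the edges from Proposition~\ref{prop square root} ensures that eigenvalues actually come this close to $\pm\tau_*$; this yields the theorem. The hardest part is the edge stability analysis: at a square-root edge the linearized self-consistent equation is degenerate, so cubic corrections must be tracked carefully, and the reduction to the scalar equation has to be set up with enough uniformity to transfer scalar stability into quantitative control of $\mathbf{G}(z)-\mathbf{M}(z)$ on the full optimal spectral domain.
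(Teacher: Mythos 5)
Your plan follows essentially the same route as the paper (Hermitian linearization, matrix Dyson equation reduced to the scalar equation $-1/m=z+\gamma(m)$, edge stability, a local law via the cumulant-expansion error bounds of \cite{ERDOS_2019SCD}, then ``no eigenvalues outside'' plus rigidity), but two points need fixing. First, the support of $\rho$ is $[\tau_-,\tau_+]$ and is in general \emph{not} symmetric (already for $q(\mb X)=\mb W^2$ it is $[0,4]$), so identifying the edges as $\pm\tau_*$ and working on the domain $|E|\leq\tau_*+N^{-2/3+\veps}$ is incorrect; you need the square-root behaviour and the edge local law at both $\tau_-$ and $\tau_+$ separately (Proposition~\ref{prop square root} gives regularity of both edges for non-reducible $q$) and then $\tau_*=\max\{|\tau_-|,|\tau_+|\}$, as in Corollary~\ref{cor rigidity}. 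Relatedly, the stability expansion at a regular edge produces an approximate \emph{quadratic} (not cubic) equation for $m-m(\tau_0)$, cf.\ Lemma~\ref{lemma quad eq}; cubic terms are the cusp mechanism, not the square-root one.

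The more substantive gap is the assertion that the error bounds of \cite{ERDOS_2019SCD} ``adapt'' directly. With the spectral parameter placed only in the $(1,1)$-block, $\mb G(z)$ is a \emph{generalized} resolvent, not the resolvent of a Hermitian matrix, so the Ward identity $\mb G\mb G^*=\Im\mb G/\eta$ is unavailable; the cumulant expansion then only controls $\mb D$ in terms of $\Vert\mb G\mb G^*\Vert$ (Proposition~\ref{prop error scd}), which cannot be converted into the $\Im m/(N\eta)$-type bounds you wrote down without further input. This is precisely where the paper does nontrivial work: it introduces the $\delta$-regularized resolvent $\mb G_1$ (adding $\I\eta$ in the complementary block, so that $\mb G_1$ \emph{is} a resolvent), proves the comparison $\mb G_0\mb G_0^*\lesssim(1+\Vert\mb X\Vert^4)\,\mb G_1\mb G_1^*$ (Lemma~\ref{lemma a priori} and \eqref{eq ward substitution}), and runs the bootstrap simultaneously for $\delta=0,1$. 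In addition, the scalar stability must be lifted to the matrix stability operator $\IL$, where one has to rule out an extra unstable direction (Proposition~\ref{prop stab}); this is harmless for you because the problematic case is exactly a shifted square of a Wigner matrix, which is reducible, but it must be checked, as must the case of non-invertible $A$ in the linearization. Without an argument substituting for the Ward identity, the optimal edge local law on scales $\eta$ down to $N^{-2/3-\veps}$, and hence the $N^{-2/3+\veps}$ rate, does not follow from your outline.
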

\begin{remark}
The deterministic value $\tau_*$ in Theorem~\ref{theorem norm} is the value of the norm as predicted by the limiting spectral measure
\begin{equation}
    \rho:=\lim_{N\to\infty}\frac1N\sum_{\mu\in\Spec(q(\mb X))}\delta_\mu,
    \label{eq SCDOS}
\end{equation}
where $\delta_\mu$ denotes the Dirac measure at point $\mu$ and the sum runs over all eigenvalues of $q(\mb X)$ accounting also for multiplicity.
That is, we have $\tau_*=\max\{|\tau_+|,|\tau_-|\}$, where $\supp(\rho)=[\tau_-,\tau_+]$ (see Definition~\ref{def edges} below). The points $\tau_+$ and $\tau_-$ can be obtained by solving an explicit polynomial equation, for details see Lemma~\ref{lemma h non-red} below.
\end{remark}

To obtain the main theorem we need several intermediate results. They  establish that the eigenvalue density of any non-reducible polynomial $q(\mb X)$ approximately shows a square root behaviour around its edge. For reducible polynomials, we classify the different edge behaviours. 

The central object of our interest is the Stieltjes transform of the limiting spectral measure $\rho$, which we denote by $m$. The function $m$ is uniquely defined by the following proposition.
\begin{prop}[Existence and uniqueness of the Stieltjes transform]
    Let $A\in\C^{l\times l}$, $A = A^*$, $A \neq 0$, $b\in\R^l$ and $c\in\R$.
    There is a unique function $m:\HH\to\HH$ such that
    \begin{enumerate}
        \item $m$ is complex analytic on all of $\HH$;\label{Cond 1 Stieltjes}
        \item $\lim_{z\to\infty}zm(z)=-1$;\label{Cond 2 Stieltjes}
        \item For all $z\in\HH$ the equation
        \begin{equation}
            -m^{-1}=z+\gamma(m),
            \label{scalar equation}
        \end{equation}
        with
        \begin{equation}
            \gamma(m):=-\Tr A(I_l+Am)^{-1}+m b^t\left((I_l+2m\widehat{A})^{-2}(I_l+m\widehat{A})\right) b-c,
            \label{eq gamma}
        \end{equation}
        is satisfied for $m=m(z)$. Here, the notation $\widehat{A}=\frac{1}{2}(A+A^t)$ was used.
    \end{enumerate}
    \label{prop SCE}
\end{prop}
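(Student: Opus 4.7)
The plan is to derive the scalar equation as the compression of a matrix Dyson equation (MDE) for a linearization of $q$, and then invoke the classical existence and uniqueness theory for MDEs with linear self-energy. First, I would linearize $q(\mb X)-z$ as the Schur complement of a self-adjoint linear pencil
\begin{equation*}
\mb L(z) = K(z)\otimes\mb I_N + \sum_{i=1}^l L_i\otimes\mb X_i \;\in\; \C^{(l+1)N\times(l+1)N},
\end{equation*}
where $K(z)\in\C^{(l+1)\times(l+1)}$ absorbs the coefficients $b$, $c$ and the spectral parameter $z$, and the $L_i\in\C^{(l+1)\times(l+1)}$ encode the linear dependence on $\mb X_i$ in such a way that the Schur complement with respect to the lower-right $l\times l$ block reproduces $q(\mb X)-z$. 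The deterministic self-consistent approximation of $\underline{\mb L(z)^{-1}}$ is the unique matrix function $M(z)\in\C^{(l+1)\times(l+1)}$ satisfying the MDE $-M(z)^{-1}=K(z)+\mc S[M(z)]$ for a linear self-energy $\mc S$ determined by the Hermitian Wigner structure; existence and uniqueness of an analytic $M:\HH\to\C^{(l+1)\times(l+1)}$ with $\Im M(z)>0$ and $zM(z)\to -E_{11}$ as $z\to\infty$ is classical.

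Next, I would reduce the matrix equation to the scalar one. Decomposing $M(z)=\bigl(\begin{smallmatrix} m & w^*\\ w & N\end{smallmatrix}\bigr)$ with $m\in\C$, $w\in\C^l$ and $N\in\C^{l\times l}$, the lower-right and off-diagonal blocks of the MDE can be solved for $N$ and $w$ as explicit rational functions of $m$, with denominators of the form $I_l+Am$ and $I_l+2m\widehat{A}$, respectively. The symmetrization $\widehat{A}=(A+A^t)/2$ appears in the off-diagonal denominator precisely because the self-energy acts by $L_i\mapsto L_i^t$ on one side of its bilinear action, reflecting the Hermiticity $\mb X_i=\mb X_i^*$. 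Substituting these expressions back into the $(1,1)$ component of the MDE and using the trace identity $\Tr(AN)=\Tr A(I_l+Am)^{-1}$ yields precisely the scalar equation \eqref{scalar equation} with $\gamma$ of the form \eqref{eq gamma}. Analyticity of $m=M_{11}$, the Herglotz property $\Im m>0$ and the normalization $zm(z)\to -1$ are then inherited from the corresponding properties of $M$ by projection onto the $(1,1)$ entry.

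For uniqueness at the scalar level, I would read the reduction in reverse: any $m:\HH\to\HH$ solving \eqref{scalar equation} with the required decay at infinity generates, through the explicit rational formulas for $w$ and $N$, a matrix $M(z)$ that solves the MDE with $\Im M\geq 0$; two distinct scalar solutions $m_1\neq m_2$ would then yield two distinct matrix solutions, contradicting the MDE uniqueness. The main obstacle I anticipate is the algebraic reduction itself, in particular arranging the linearization so that eliminating $N$ and $w$ produces exactly the bilinear factor $mb^t(I_l+2m\widehat{A})^{-2}(I_l+m\widehat{A})b$ in \eqref{eq gamma}, which carries the nontrivial interaction between the linear term $\sum_i b_i\mb X_i$ and the quadratic term of $q$. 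A secondary technicality is the degenerate case $\det A=0$, which requires either a limiting argument from $A+\veps I_l$ or a slightly enlarged linearization; in either route all identities extend continuously on $\HH$ since the relevant inverses remain controlled away from the real axis.
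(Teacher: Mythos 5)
Your existence strategy coincides with the paper's at its core: the paper also linearizes $q$, writes the Dyson equation \eqref{DE}, and identifies $m$ with the $(1,1)$ entry of its solution, and the algebraic reduction you sketch (solving the off-diagonal and lower-right blocks as rational functions of $m$ and substituting back) is exactly how \eqref{scalar eq delta}, hence \eqref{scalar equation}, is derived in Section~4. The gap is in your appeal to ``classical'' MDE theory. In the relevant equation the spectral parameter enters only through $zJ$, so the imaginary part of the source term is merely positive \emph{semi}definite (rank one); the theorem of \cite{helton2007operatorvalued} requires strictly positive imaginary part and is invoked in the paper only for the regularized case $\delta\in(0,1]$, while for $\delta=0$ the paper uses the generalized statement \cite[Lemma~2.6]{erdos2019local} and, concretely, realizes the solution as $M_0(z)=(\mathrm{id}\otimes\tau)(\mb L_{\mathrm{sc}}-zJ\otimes\mathbb 1)^{-1}$ with free semicircular variables $s_i$; the Schur complement identity $(M_0)_{11}=\tau\big((q(s)-z\otimes\mathbb 1)^{-1}\big)$ then yields analyticity, $\Im m>0$ and $zm(z)\to-1$ in one stroke. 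Without such a construction neither $\Im M>0$ nor your normalization is available; in fact $zM(z)\to-E_{11}$ is false here, since by \eqref{sol DE} the lower-right block of $M$ tends to $-A\neq0$ as $z\to\infty$.

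The uniqueness step is where the gap is most serious. Lifting an arbitrary scalar solution $m:\HH\to\HH$ of \eqref{scalar equation} to a matrix $M[m]$ and invoking MDE uniqueness requires (i) a uniqueness theorem for the generalized equation with source $zJ-K_0$, which is not the classical one, and (ii) a proof that the lifted matrix lies in the admissible positivity class (say $\Im M\geq0$), which does not follow from the rational formulas \eqref{sol DE} for an arbitrary Herglotz $m$. The paper sidesteps both points with a direct scalar argument: since $\gamma$ in \eqref{eq gamma} is analytic at $m=0$, for large $|z|$ the map $m\mapsto-(z+\gamma(m))^{-1}$ is a contraction on a small disc around the origin, so any two analytic solutions with $zm(z)\to-1$ agree near infinity and hence on all of $\HH$ by the identity theorem (``stability at infinity''). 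If you replace your uniqueness-by-lifting with this argument, and for existence either cite \cite[Lemma~2.6]{erdos2019local} or use the free-probability realization (your $A+\veps I_l$ limiting argument for singular $A$ is exactly the paper's), the proof closes.
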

The proof of Proposition~\ref{prop SCE} is deferred to Appendix~\ref{Appendix DE}.
\begin{definition}[Self-consistent density of states]
        By Conditions~\ref{Cond 1 Stieltjes} and \ref{Cond 2 Stieltjes} the function $m$ in Proposition~\ref{prop SCE} is the Stieltjes transform of a unique probability measure $\rho$ on the real line, i.e.
    \begin{equation}
        m(z)=\int_\R\frac{\rho(\dd x)}{x-z}
        \label{eq m as trafo}
    \end{equation}
    for all $z\in\HH$. We call $\rho$ the self-consistent density of states corresponding to $q(\mb X)$.
\end{definition}
\begin{remark}
    By the global law, \cite[Proposition~2.17]{erdos2019local}, the self-consistent density of states is indeed the limiting spectral measure.
\end{remark}
 Note that since $A$ is Hermitian, the matrix $\widehat{A}=\frac{1}{2}(A+A^t)$ denotes the entrywise real part of $A$. This should not be confused with the algebraic definition of the real part of a matrix, $\Re R=\frac{1}{2}(R+R^*)$.
 Let $I\subset \R$ be an interval and $E\in I$. Due to the Stieltjes inversion formula, the limiting spectral measure and its Stieltjes transform are related by the equation
\begin{equation}
    \rho(E)=\lim_{\eta\searrow 0}\frac1\pi\Im m(E+\I\eta),
    \label{rho from m}
\end{equation}
whenever that limit exists for all $E\in I$ (see e.g. \cite[Equation~(1.4)]{Gesztesy2000}).
It was shown in \cite[Theorem~1.1 (3)]{Shlyakhtenko_2015} that $\supp(\rho)$ is a single compact interval on the real line. In particular, this means that $\rho$ has no internal edges. We use the following definition.
\begin{definition}[Edges of the limiting spectral measure]
    Let $\tau_+$ denote the position of the right edge of $\rho$ and let $\tau_-$ denote the position of the left edge of $\rho,$ i.e., we have
    \begin{equation}
        \supp(\rho)=[\tau_-,\tau_+]
    \end{equation}
    \label{def edges}
\end{definition}
We also introduce the notion of a regular edge.
\begin{definition}
    Let $\tau_0\in\{\tau_-,\tau_+\}$. The limiting spectral measure $\rho$ is said to have a regular edge at $\tau_0$ if $\rho(\dd E)=\rho(E)\dd E$ has a Lebesgue density (also denoted by $\rho$) in a neighborhood of $\tau_0$ and
    \begin{equation}
        \lim_{\substack{E\in\mr{supp}(\rho)\\
        E\to\tau_0}} \frac{\rho(E)}{\sqrt{|\tau_0-E|}}
    \end{equation}
    exists and does not equal 0.
\end{definition}
In other words, regular edges are those that show a square root decay of the density $\rho$. Our next two results concern the edge characterization of reducible and non-reducible polynomials.
\begin{prop}[Edges of non-reducible polynomials]
Let $q$ be a non-reducible quadratic polynomial and $\rho$ its associated limiting spectral measure defined in \eqref{eq SCDOS}. Then the measure $\rho$ has regular edges both at $\tau_-$ and at $\tau_+$.
\label{prop square root}
\end{prop}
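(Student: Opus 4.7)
The plan is to reduce the problem to the scalar self-consistent equation highlighted in the introduction. Setting
\begin{equation*}
F(m):=-\frac{1}{m}-\gamma(m),
\end{equation*}
so that $z=F(m)$ whenever $m=m(z)$, I would first recall that on the interior of $\supp(\rho)$ the Stieltjes transform $m$ admits a continuous boundary value $m(E+\I 0)\in\HH$, while at $\tau_\pm$ the boundary value $m_\pm:=\lim_{\eta\searrow0}m(\tau_\pm+\I\eta)$ is real (any non-real boundary value would produce nonzero density by Stieltjes inversion, contradicting the definition of an edge). Because $F$ is analytic at $m_\pm$ (one checks that $I_l+Am_\pm$ and $I_l+2m_\pm\widehat{A}$ are invertible — otherwise $F$ would blow up, not equal a real number $\tau_\pm$), and because $z\mapsto m(z)$ cannot be continued analytically past $\tau_\pm$, the inverse function theorem forces the critical point condition $F'(m_\pm)=0$.

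The heart of the argument is then to show that $F''(m_\pm)\neq 0$. Granting this, writing $m-m_\pm=w$ and expanding,
\begin{equation*}
z-\tau_\pm=\tfrac{1}{2}F''(m_\pm)\,w^{2}+\mathcal O(w^{3}),
\end{equation*}
the correct branch of the square root gives $m(z)-m_\pm=\sqrt{2(z-\tau_\pm)/F''(m_\pm)}(1+o(1))$ as $z$ approaches $\tau_\pm$ non-tangentially from $\HH$. Taking imaginary parts and using $\rho(E)=\pi^{-1}\Im m(E+\I 0)$ yields the desired regular square root decay $\rho(E)\sim\sqrt{|\tau_\pm-E|}$.

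To establish $F''(m_\pm)\neq 0$ I would compute $F''$ explicitly from \eqref{eq gamma}: since $A$ and $\widehat A$ commute with $I_l+Am$ and $I_l+2m\widehat A$ respectively, one gets clean expressions of the form
\begin{equation*}
F''(m)=-\frac{2}{m^{3}}+2\Tr\!\bigl[A^{3}(I_l+Am)^{-3}\bigr]+(\text{quadratic form in }b\text{ built from }\widehat A,\,m),
\end{equation*}
with analogous expressions for $F'$. I would then argue by contradiction: assume the pair $(F'(m_0),F''(m_0))=(0,0)$ has a real solution $m_0\neq 0$ that is admissible as an edge. Using the spectral decomposition $A=\sum_j \alpha_j u_ju_j^{*}$ and decomposing $b$ in a basis adapted to $\widehat A$, each of $F'(m_0)$ and $F''(m_0)$ becomes a rational function in $m_0$ and in the projections of $b$ onto the spectral subspaces of $A$ and $\widehat A$. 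Clearing denominators, the simultaneous vanishing is a system of polynomial identities that I intend to show forces $A$ to be a rank-one Hermitian matrix $\alpha vv^{*}$ and $b$ to be collinear with $v+\bar v$ in the form demanded by \eqref{eq shifted red}, contradicting the non-reducibility hypothesis.

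The main obstacle is precisely this algebraic classification: the expressions for $F'$ and $F''$ contain several competing terms (from the $\Tr A(I+Am)^{-1}$ part and from the more intricate $b$-quadratic part), and \emph{a priori} they could cancel for various reasons that do not reduce $A$ to rank one. Carrying out the classification presumably requires exploiting the positivity/sign structure coming from the fact that $m_\pm$ lies outside $\Spec(-A^{-1})\cup\Spec(-(2\widehat A)^{-1})$, together with a careful bookkeeping of the eigenspace directions $b$ can occupy, and showing that any non-rank-one obstruction produces an extra nonvanishing contribution to $F''(m_0)$.
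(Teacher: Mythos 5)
Your overall framework is the same as the paper's: reduce to the scalar equation \eqref{scalar equation}, view $z=F(m)$ with $F(m)=-1/m-\gamma(m)$, show the edge corresponds to a critical point of $F$, and get the square root from a non-degenerate quadratic expansion plus Stieltjes inversion. However, the two steps that carry all the difficulty are left unproven, and one of them is aimed at the wrong dichotomy. First, you simply assert that the boundary value $m_\pm=\lim_{\eta\searrow0}m(\tau_\pm+\I\eta)$ exists as a finite real number. This is exactly what fails in the shifted reducible cases: at the left edge the density can blow up like $(E-\tau_-)^{-1/2}$ and $m\to\infty$, so finiteness of $m_\pm$ is not a general feature of Stieltjes transforms but is precisely where non-reducibility must be used. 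In the paper this is the content of Lemma~\ref{lemma root conditions} together with the first parts of Lemma~\ref{lemma h non-red}: one shows that $h=F'$ (see \eqref{function h}) actually possesses a root in the maximal interval between $0$ and the nearest pole of $\gamma$, and only then identifies $\tau_\pm$ with the critical value $z(m_\pm)$ via an analytic-continuation/uniqueness argument. Your proposal contains no substitute for this existence step.

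Second, the non-degeneracy $F''(m_\pm)\neq0$ is acknowledged as the ``main obstacle'' but not established, and your plan to prove it by showing that a simultaneous zero of $F'$ and $F''$ forces the reducible form \eqref{eq shifted red} misreads where reducibility enters. The paper proves the stronger self-improving inequality of Lemma~\ref{lemma quad stab}: on the admissible interval, $h(m)\geq0$ already implies $h'(m)>0$, \emph{for every} quadratic polynomial of the form \eqref{eq polynomial}, reducible or not. Hence a degenerate critical point never occurs at all; what distinguishes reducible polynomials is not a double root of $h$ but the possible absence of any root (equivalently, $m_\pm$ escaping to infinity), which produces the singular edge behaviours listed in Proposition~\ref{prop m red}. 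So even if you carried out your algebraic classification, you would find that the hypothesis $(F'(m_0),F''(m_0))=(0,0)$ is simply never satisfiable on the relevant interval, and the actual work — the sign/positivity inequality of Lemma~\ref{lemma quad stab}, which requires a genuinely careful case analysis over the spectra of $A$ and $\widehat A$ and the components of $b$ — is still owed. As it stands, the proposal is a correct outline of the reduction but has genuine gaps at both the existence of a finite critical point (where non-reducibility is needed) and its non-degeneracy.
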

Next, we consider shifted reducible polynomials of the form
\begin{equation}
    q_r(\mb X)=(v^*\mb X-\xi)(v^*\mb X-\xi)^*
    \label{shifted normalized}
\end{equation}
for some $\xi\in\R_{\geq0}$ and normalized $v\in\C^l$.
\begin{remark}
    Compared to the general case defined in \eqref{eq shifted red} we restrict here to $\alpha=1$ and $\beta=0$. As these constants only constitute a scaling and a shift respectively, the following result, Proposition~\ref{prop edge}, generalizes in a straightforward manner to all $\alpha\neq0$ and $\beta\in\R.$ For $\alpha<0$ the roles of the left and the right edge are reversed.
\end{remark}
We introduce the quantities $\sigma=\Vert \Re v\Vert,$ $\mu=\langle \Re v,\Im v\rangle$ and for $\sigma\in(0,1)$ we define $s>0$ with
\begin{equation}
    s^2:=
    \begin{cases}
        \left((\sigma^2+a^2_+(1-\sigma^2)+2a_+\mu)(\sigma^2+a_+\mu)\right)^{-1}\\
        \qquad+\left((\sigma^2+a^2_-(1-\sigma^2)+2a_-\mu)(\sigma^2+a_-\mu)\right)^{-1}&\text{if }\mu\neq0 ,\\
        \sigma^{-4}&\text{if }\mu=0 ,\\
    \end{cases}
    \label{eq s}
\end{equation}
where for $\mu\neq0$ we also set the constant $a_\pm\in\R$ to
\begin{equation}
    a_\pm=
        \frac{1-2\sigma^2}{2\mu}\pm\sqrt{\left(\frac{1-2\sigma^2}{2\mu}\right)^2+1}.
    \label{eq a_pm}
\end{equation}
The edges of the shifted reducible polynomial are then characterized as follows.
\begin{prop}[Edges of shifted reducible polynomials]
    Let $q_r$ be a shifted reducible polynomial as in \eqref{shifted normalized}
    for some $\xi\in\R_{\geq0}$ and normalized $v\in\C^l$.
    Then the limiting spectral measure $\rho$  has a regular edge at $\tau_+=\max\supp(\rho)$.
        Furthermore there is a $\kappa>0$ such that the behaviour of $\rho$  on $(\tau_-,\tau_-+\kappa)$ with $\tau_-=\min\supp(\rho)$ is given by 
        \begin{equation}
            \rho(E)\sim
            \begin{cases}
              (E-\tau_-)^{-\frac12}\quad\text{if }\xi=0\\
              (E-\tau_-)^{-\frac12}\quad\text{if }v\in\R^l \text{ and } \xi<2\\
              (E-\tau_-)^{-\frac14}\quad\text{if }v\in\R^l \text{ and } \xi=2\\
              (E-\tau_-)^{-\frac12}\quad\text{if }v\in\C^l\setminus e^{\I\phi}\R^l \text{ for all } \phi\in(-\pi,\pi] \text{ and } s\xi<2\\
              (E-\tau_-)^{-\frac13}\quad\text{if }v\in\C^l\setminus e^{\I\phi}\R^l \text{ for all } \phi\in(-\pi,\pi] \text{ and } s\xi=2.
            \end{cases}
        \end{equation}
    In all other cases the left edge is also regular, i.e. $\rho(E)\sim(E-\tau_-)^{1/2}.$
    \label{prop edge}
\end{prop}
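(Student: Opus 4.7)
The plan is to specialise the scalar equation \eqref{scalar equation} to the shifted reducible case, where $A=vv^*$ has rank one, $b=-2\xi\,\Re v$, and $c=\xi^2$, and then read off the edge behaviour of $\rho$ from an explicit Taylor expansion of $z(m):=-1/m-\gamma(m)$ about $m=\infty$. Since $A^2=A$ and $\|v\|=1$, a direct computation gives $-\Tr A(I_l+Am)^{-1}=-(1+m)^{-1}$. Diagonalising $\widehat A=(\Re v)(\Re v)^T+(\Im v)(\Im v)^T$ in an orthonormal eigenbasis $e_\pm$ of $\Span(\Re v,\Im v)$ with eigenvalues $\lambda_\pm$ and weights $c_\pm:=|\langle\Re v,e_\pm\rangle|^2$ reduces the quadratic form in $b$ to $4\xi^2\sum_{\iota=\pm}c_\iota(u+\lambda_\iota)/(u+2\lambda_\iota)^2$ after the change of variable $u=1/m$. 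A routine computation then produces
\begin{equation*}
z(u)=\xi^2\Bigl(1-\sum_\iota\tfrac{c_\iota}{\lambda_\iota}\Bigr)+\Bigl(-1+\tfrac{\xi^2}{4}\sum_\iota\tfrac{c_\iota}{\lambda_\iota^3}\Bigr)u^2+\Bigl(1-\tfrac{\xi^2}{4}\sum_\iota\tfrac{c_\iota}{\lambda_\iota^4}\Bigr)u^3+\mc O(u^4).
\end{equation*}
The identity $\langle\Re v,\widehat A^{-1}\Re v\rangle=1$, which follows from $\widehat A=(\Re v)(\Re v)^T+(\Im v)(\Im v)^T$, forces $\sum_\iota c_\iota/\lambda_\iota=1$ whenever $\widehat A$ is invertible, so the constant term of this expansion already vanishes.

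The right edge $\tau_+$ is attained at a finite quadratic critical point of $z$ and is therefore regular. For the left edge, the dichotomy is whether the relevant Stieltjes branch $m\in(0,m_-)$ extends to $m_-=+\infty$ (corresponding to $\tau_-=0$ via the vanishing constant term) or is cut off by a finite critical point $m_-<\infty$ (giving $\tau_->0$ and a regular edge). A short monotonicity analysis of the rational function $z(u)$ shows that the cut-off happens precisely when $v\in\R^l$ with $\xi>2$ or $v\notin e^{\I\phi}\R^l$ with $s\xi>2$, matching the complementary cases at the end of the statement. In the remaining five cases $\tau_-=0$, and Stieltjes inversion \eqref{rho from m} combined with the standard rule ``if the leading coefficient of $z(u)$ at $u=0$ is $\alpha u^k$, then $\rho(E)\sim|E/\alpha|^{-1/k}$ on the appropriate branch'' yields the asserted exponents once $k$ is determined.

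For $\xi=0$ all $\xi^2$-terms drop and the leading power is $u^2$, giving $E^{-1/2}$. For $v\in\R^l$ (and, by the phase invariance in the remark after \eqref{shifted normalized}, for any phase multiple of a real vector), $\widehat A$ has rank one with $\lambda_+=1$ and $c_+=1$, so $\sum_\iota c_\iota/\lambda_\iota^k=1$ for every $k$; the $u^2$ and $u^3$ coefficients therefore vanish simultaneously exactly at $\xi=2$, extending the expansion to order $u^4$ and giving $E^{-1/4}$, while for $\xi<2$ the $u^2$ coefficient is negative and the rate is $E^{-1/2}$. For $v$ not a phase multiple of a real vector, the $u^2$ coefficient vanishes exactly when $\xi^2\sum_\iota c_\iota/\lambda_\iota^3=4$; at this threshold the $u^3$ coefficient is generically non-zero (the $k=3$ and $k=4$ sums no longer coincide), producing the leading $u^3$ term and the rate $E^{-1/3}$, while below the threshold one has $E^{-1/2}$.

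The main technical obstacle is the algebraic verification that the cancellation condition $\xi^2\sum_\iota c_\iota/\lambda_\iota^3=4$ in the complex case coincides with $s\xi=2$, where $s$ is defined in \eqref{eq s}. This requires diagonalising the $2\times 2$ representation of $\widehat A$ on $\Span(\Re v,\Im v)$ in the $(\sigma,\mu)$-parametrisation: the eigenvectors take the form $\Re v+a\Im v$ where $a$ solves $\mu a^2-(1-2\sigma^2)a-\mu=0$, whose two roots are precisely the constants $a_\pm$ of \eqref{eq a_pm}. Substituting these eigenvectors into the expressions for $\lambda_\pm, c_\pm$ and simplifying the sum $\sum_\iota c_\iota/\lambda_\iota^3$ produces the right-hand side of \eqref{eq s}. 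One must also verify that the $u^3$ coefficient does not simultaneously vanish at the threshold for complex $v$, which is the mechanism that distinguishes the cubic cusp $E^{-1/3}$ from the degenerate real case where the expansion skips to $u^4$.
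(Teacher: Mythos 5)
Your reduction is the same one the paper uses: specialise the scalar equation \eqref{scalar equation} to $A=vv^*$, $b=-2\xi\Re v$, $c=\xi^2$, study the inverse function $z(m)=-1/m-\gamma(m)$, and read the left-edge exponents off its expansion at $m=\infty$ (your expansion in $u=1/m$ is the integrated form of the paper's expansion of $h=\mathrm{d}z/\mathrm{d}m$ in Lemma~\ref{lemma root conditions}); your coefficients and thresholds check out, including the identification $s^2=\sum_\iota c_\iota/\lambda_\iota^3$ with \eqref{eq s}. The genuine gap is the step you dismiss as ``a short monotonicity analysis'': the sign of the $u^2$-coefficient only controls $z'$ near $m=\infty$, and by itself does not exclude a pair of intermediate critical points of $z$ on $(0,\infty)$, which would relocate $\tau_-$ to a positive value and destroy the claimed singularity; likewise it does not give uniqueness and simplicity of the critical point $m_+<0$ that you need for regularity of $\tau_+$. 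This is exactly where the paper invests Lemma~\ref{lemma quad stab} (the implication $h\ge0\Rightarrow h'>0$, a nontrivial case analysis) together with Lemma~\ref{lemma root conditions}. A genuinely short substitute does exist in this special case (e.g. write $1-(1+u)^{-2}=u(2+u)/(1+u)^2$ and use $(2\lambda)^3/(u+2\lambda)^3\le 8/(u+2)^3\le(2+u)/\bigl(2(1+u)^2\bigr)$ for $0<\lambda\le1$, $u>0$, to dominate $4\xi^2u\sum_\iota c_\iota(u+2\lambda_\iota)^{-3}$ under the sub-threshold condition), but some such argument must be supplied; as written the crux of the proposition is asserted rather than proved. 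Two further points: (i) ``generically non-zero'' for the $u^3$-coefficient at the complex threshold is not enough — you need it always non-zero, which does hold because in the genuinely complex case both eigenvalues $\lambda_\pm$ of $\widehat A$ on $\Span(\Re v,\Im v)$ lie strictly in $(0,1)$, so $\sum_\iota c_\iota/\lambda_\iota^4>\sum_\iota c_\iota/\lambda_\iota^3$; (ii) the identification of $\tau_\pm$ with the critical value $z(m_+)$, respectively with $\lim_{m\to\infty}z(m)$, and the transfer of the real-axis asymptotics of $m$ into $\HH$ before Stieltjes inversion \eqref{rho from m}, are not automatic — the paper proves them by an analytic-continuation/uniqueness argument (end of the proof of Lemma~\ref{lemma h red}) and then Proposition~\ref{prop m red}; your ``standard rule'' silently subsumes both.

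One smaller caveat: your appeal to phase invariance to treat any $v\in e^{\I\phi}\R^l$ as the real case is not correct under the normalisation $\xi\in\R_{\ge0}$, since the invariance rotates $\xi$ simultaneously; e.g. $v=\I w$ with $w$ real and $\xi>0$ gives $q_r=\mb W^2+\xi^2$, whose left edge sits at $\tau_-=\xi^2$ with a $(E-\tau_-)^{-1/2}$ blow-up, so the effective threshold in the phase-real case involves $\sigma=\Vert\Re v\Vert$ rather than $\xi$ alone. The paper's case list shares this imprecision, so it is not a defect specific to your argument, but if you carry out the proof you should state which parametrisation of these boundary cases you are using.
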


\begin{remark}
    The reason behind this result is that the edge behaviour near the left edge follows from the distribution of small singular values of $v^*\mb X-\xi$. It follows that $\rho$ has singularities precisely if $\xi$ is in the asymptotic spectrum of $v^*\mb X$, with stronger singularities being observed if $\xi$ is in the bulk of the spectrum and weaker ones if $\xi$ is on the edge of the spectrum.
\end{remark}

Secondly we prove that $m\mb I_N$, where $\mb I_N$ is the identity matrix on $\C^{N\times N}$, well approximates the resolvent $\mb g=(q(\mb X)-z\mb I_N)^{-1}\in\C^{N\times N}$ of $q(\mb X)$ around any regular edge and away from the spectrum. More precisely we prove uniform convergence of the quadratic form of $\mb g$ on the set
\begin{equation}
    \mathbb D_{\gamma}^{\kappa_0}:=\{E+\I\eta\in\HH:\, |E-\tau_0|\leq\kappa_0,\,N^{-1+\gamma}\leq\eta\leq1\}
\end{equation}
for some $\kappa_0\sim1$ and for all $\gamma>0$, as well as on the set
\begin{equation}
    \mathbb G_{\gamma}^{C,\eta_0}:=\{E+\I\eta\in\HH:\, C^{-1}\leq\dist(E,\supp(\rho))\leq C,\,N^{-1+\gamma}\leq\eta\leq\eta_0\}
\end{equation}
for all $C,\gamma>0$ and some $\eta_0$ depending on $C$.
\begin{theorem}[Local law for regular edges of polynomials]
Let $q$ be a polynomial of the form \eqref{eq polynomial} and let $\rho$ have a regular edge at $\tau_0\in\{\tau_-,\tau_+\}$. There is a $\kappa_0>0$, depending only on the coefficients of $q$, such that for all $\veps,\gamma,D>0$ and $z=E+\I\eta\in\mathbb{D}_{\gamma}^{\kappa_0}$ the isotropic local law 
\begin{equation}
\Pb\left(|\langle \mb x, (\mb g-m)\mb y\rangle|>\Vert\mb x\Vert\Vert\mb y\Vert N^\veps\left(\frac{\Im m}{\sqrt{N\eta}}+\frac{1}{N\eta}\right)\right)\lesssim_{\veps,\gamma,D}N^{-D}
\label{eq ll iso}
\end{equation}
holds for all deterministic $\mb x,\mb y\in\C^{N}$.
Moreover, the averaged local law
\begin{equation}
\Pb\left(\left|\frac{1}{N}\Tr(\mb B(\mb g-m))\right|>\Vert\mb B\Vert\frac{N^\veps}{N\eta}\right)\lesssim_{\veps,\gamma,D}N^{-D}
\label{eq ll av}
\end{equation}
holds for all deterministic $\mb B\in\C^{N\times N}$. If additionally  $E\notin\supp(\rho),$ an improved averaged local law of the form
\begin{equation}
\Pb\left(\left|\frac{1}{N}\Tr(\mb B(\mb g-m))\right|>\Vert\mb B\Vert N^\veps\left(\frac{1}{N|z-\tau_0|}+\frac{1}{(N\eta)^2\sqrt{|z-\tau_0|}}\right)\right)
\lesssim_{\veps,\gamma,D}N^{-D}
\label{eq ll imp}
\end{equation}
holds.
 \label{thm LL}
\end{theorem}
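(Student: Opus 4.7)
The plan is to run the standard three-step strategy for local laws — linearization, stability of the Dyson equation near the edge, and a bootstrap argument driven by a bound on the random error matrix — specialised to the polynomial setting via the scalar reduction highlighted in the introduction. First, I would linearise: build a self-adjoint affine pencil $\mb L(\mb X) \in \C^{(l+1)N \times (l+1)N}$ in the entries of $\mb X$, together with a $z$-dependent shift $\Lambda(z)$, such that an appropriate block of $(\mb L(\mb X)-\Lambda(z))^{-1}$ equals $\mb g(z)=(q(\mb X)-z\mb I_N)^{-1}$. The deterministic counterpart $\mb M(z)=M(z)\otimes \mb I_N$ is characterised by an $(l+1)\times(l+1)$ matrix Dyson equation with linear self-energy operator $\mc S$. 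Projecting onto the distinguished block and eliminating the auxiliary blocks via Schur complement reduces that equation to the scalar equation \eqref{scalar equation}, so that the relevant entry of $M$ equals the $m$ of Proposition~\ref{prop SCE}, and the corresponding entry of $\mb M$ is the deterministic approximation for $\mb g$.

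Second, at the regular edge $\tau_0$ I would establish the square-root expansion $m(z)=m(\tau_0)+c_0\sqrt{\tau_0-z}+O(|z-\tau_0|)$ by implicit-function analysis of \eqref{scalar equation} at the critical point where the $m$-derivative of the right-hand side vanishes; by Proposition~\ref{prop square root} this is valid whenever $\rho$ has a regular edge at $\tau_0$. This yields the standard edge bounds $\Im m\sim\sqrt{|\kappa|+\eta}$ for $E\in\supp(\rho)$ and $\Im m\sim\eta/\sqrt{|\kappa|+\eta}$ for $E\notin\supp(\rho)$, with $\kappa=E-\tau_0$, and a quantitative lower bound on the stability operator $\mathbb 1-\mc S_{\mb M}$ acting on $\C^{(l+1)\times(l+1)}$, with the softest direction aligned with the block projection that produces $m$ — exactly as in Wigner-type models.

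Third, I would bound the random error $\mb D$ of the matrix Dyson equation in the isotropic and averaged $p$-norms using the cumulant-expansion machinery of \cite{ERDOS_2019SCD}, modified to accommodate the $l$ independent Wigner blocks appearing in the pencil. This yields $\Vert \mb D\Vert_p\lesssim N^{o(1)}\bigl(\sqrt{\Im m/(N\eta)}+1/(N\eta)\bigr)$ and $\Vert \mb D\Vert_p^{\mr{av}}\lesssim N^{o(1)}/(N\eta)$. Inverting the stability operator gives $\mb G-\mb M = -\mb M\mathcal K^{-1}[\mb D\mb M]$ up to a cubic self-improving remainder, and a standard continuity argument in $\eta$ — starting from the trivial bound at $\eta\sim 1$ and descending to $\eta=N^{-1+\gamma}$ — closes \eqref{eq ll iso} and \eqref{eq ll av} after contracting against the block projection that extracts $\mb g$. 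For \eqref{eq ll imp} I would exploit that outside $\supp(\rho)$ the stability operator acquires an extra gap of order $\sqrt{|z-\tau_0|}$; decomposing the averaged error into its component parallel to the soft mode of $\mathbb 1-\mc S_{\mb M}$ and its transversal complement produces the $1/\bigl((N\eta)^2\sqrt{|z-\tau_0|}\bigr)$ term, while the $1/(N|z-\tau_0|)$ term absorbs the projection onto the soft mode itself.

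The main obstacle is the stability analysis in the second step: one must verify that the singularity of the full matrix Dyson equation at $\tau_0$ is entirely captured by the scalar reduction, so that no spurious zero modes are introduced by the $(l+1)\times(l+1)$ pencil structure. Concretely, this requires showing that the critical eigendirection of $\mathbb 1-\mc S_{\mb M}$ at $z=\tau_0$ is one-dimensional, has non-trivial overlap with the block projection defining $m$, and that the coefficient $c_0$ in the square-root expansion is non-zero precisely under the non-reducibility/regular-edge hypothesis. This is the point where the \emph{comprehensive stability analysis of the Dyson equation} advertised in the introduction does the heavy lifting, and it is the step most specific to the polynomial setting treated here.
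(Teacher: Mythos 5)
Your high-level route (linearization, matrix Dyson equation reduced to the scalar equation for $m$, edge stability, cumulant bound on the error, bootstrap in $\eta$) is the same as the paper's, but two steps would fail as you have set them up. First, the object whose $(1,1)$ block produces $\mb g$ is a \emph{generalized} resolvent $\mb G_0=(\mb L-zJ)^{-1}$: the spectral parameter enters only through the block projection $J$, so the Ward identity $\mb G_0\mb G_0^*=\Im\mb G_0/\eta$ is false (only $\mb G_0 J\mb G_0^*=\Im\mb G_0/\eta$ survives). The error estimates of \cite{ERDOS_2019SCD} that you invoke control $\Vert\mb D\Vert_p$ in terms of $\Vert\mb G\mb G^*\Vert_{p_0}$, and your claimed bounds $\Vert\mb D\Vert_p\lesssim N^{o(1)}\bigl(\sqrt{\Im m/(N\eta)}+1/(N\eta)\bigr)$ implicitly use the Ward identity to turn $\mb G\mb G^*$ into $\Im\mb G/\eta$; for $\mb G_0$ this conversion is not available and the bootstrap does not close. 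The paper repairs this by working simultaneously with the $\delta$-regularized resolvent $\mb G_\delta=(\mb L-zJ-\I\eta\delta(\mb I-J))^{-1}$, proving the local law for the genuine resolvent $\mb G_1$, and transferring it to $\delta=0$ through the deterministic comparison $\mb G_0^*\mb G_0\lesssim(1+\Vert\mb X\Vert^4)\,\mb G_1^*\mb G_1$ (Lemma~\ref{lemma a priori} and \eqref{eq ward substitution}); even the monotonicity of $\eta\mapsto\eta\Vert\mb G_0\Vert_p$ used to start the bootstrap has to be re-derived from the partial Ward identity $\mb G_0J\mb G_0^*=\Im\mb G_0/\eta$. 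Your proposal needs some substitute for this mechanism.

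Second, Theorem~\ref{thm LL} assumes only that $\rho$ has a regular edge at $\tau_0$, so it also covers shifted squares of Wigner matrices, $q(\mb X)=\pm a(v^t\mb X)^2+c$ (rank-one real $A$, $b=0$), which do have a regular edge. For exactly these polynomials the ``spurious zero mode'' you single out as the main obstacle is genuinely present: at the edge one has $-\tfrac12\in\Spec(m\widehat A)$, and the stability operator of the linearized Dyson equation acquires an additional unstable direction, so the one-dimensionality of the critical eigendirection that your argument hinges on is false and the linearization scheme breaks down. Your proposal offers no fallback; moreover tying the non-vanishing of $c_0$ to ``non-reducibility'' is off the mark, since reducible polynomials also have regular edges covered by the theorem. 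The paper handles this case separately and directly, without linearization, by writing $\mb g(z)=\tfrac{1}{2\sqrt z}\bigl(\mb g_{\mb W}(\sqrt z)+\mb g_{-\mb W}(\sqrt z)\bigr)$ and $m(z)=m_{\mr{sc}}(\sqrt z)/\sqrt z$ and importing the Wigner edge local law; for all other polynomials it proves (as you anticipate) that the critical direction is simple with order-one overlaps, via an explicit Schur-complement inversion of the stability operator whose small eigenvalue is governed by the function $h$ from Definition~\ref{def h}. Without this case distinction (or an argument that tames the extra unstable direction) your proof does not cover the full statement.
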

Away from the spectrum, we will also prove the following form of an averaged local law
\begin{prop}[Local law away from the spectrum]
    Let $q$ be a polynomial of the form \eqref{eq polynomial}. For all $C>0$ there is an $\eta_0>0$, depending only on the coefficients of $q$, such that an averaged local law of the form
    \begin{equation}
        \Pb\left(\left|\frac{1}{N}\Tr(\mb B(\mb g-m))\right|>\Vert\mb B\Vert N^\veps\left(\frac{1}{N}+\frac{1}{(N\eta)^2}\right)\right)
        \lesssim_{\veps,\gamma,D,C}N^{-D}
    \label{eq ll away}
    \end{equation}
     holds true for all deterministic $\mb B\in\C^{N\times N}$, $\gamma,\veps,D>0$ and $z\in\mathbb G_{\gamma}^{C,\eta_0}$.
     \label{prop LL away}
\end{prop}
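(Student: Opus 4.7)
The plan is to cover the domain $\mathbb G_\gamma^{C,\eta_0}$ with two subregions and treat them separately. Because $\supp(\rho) = [\tau_-,\tau_+]$ is a single interval, every $E\notin\supp(\rho)$ satisfies $\dist(E,\supp(\rho)) = |E - \tau_0|$ for a unique $\tau_0 \in \{\tau_-,\tau_+\}$. I would split according to whether $|E-\tau_0| \le \kappa_0$ or $|E-\tau_0| > \kappa_0$, where $\kappa_0$ is the constant from Theorem~\ref{thm LL}.

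On the near-edge piece $\{z : C^{-1} \le |E-\tau_0| \le \kappa_0,\, N^{-1+\gamma} \le \eta \le \eta_0\}$, the conclusion follows directly from the improved averaged bound \eqref{eq ll imp} of Theorem~\ref{thm LL}, since $|z-\tau_0|\ge|E-\tau_0|\ge C^{-1}$ implies
\[
N^{\veps}\left(\frac{1}{N|z-\tau_0|} + \frac{1}{(N\eta)^2\sqrt{|z-\tau_0|}}\right) \;\lesssim_C\; N^{\veps}\left(\frac{1}{N} + \frac{1}{(N\eta)^2}\right),
\]
which is precisely the claimed bound. In the non-reducible case both edges are regular by Proposition~\ref{prop square root} so \eqref{eq ll imp} is available at each; in the reducible case one still has regularity at $\tau_+$, and at a singular left edge the same bound can be obtained by the parallel argument based on the singular-value decomposition of $v^*\mb X - \xi$ that underpins Proposition~\ref{prop edge}.

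On the far-from-spectrum piece $\{z : |E-\tau_0| > \kappa_0,\, \dist(E,\supp(\rho)) \le C\}$, I would rerun the matrix-Dyson-equation machinery developed for Theorem~\ref{thm LL}, but in the simpler regime where neither square-root edge asymptotics nor bulk coercivity are directly available. The steps are: (a) set up the matrix Dyson equation for the linearization of $q(\mb X)$ and check $O(1)$-bounds on its deterministic solution $M(z)$; (b) establish uniform invertibility $\|\mathcal L^{-1}\|_{\mr{sp}} \lesssim 1$ of the stability operator, which by blockwise averaging reduces to the scalar lower bound $|1 + m^2\gamma'(m)|\gtrsim 1$; (c) invoke the cumulant-expansion estimates of \cite{ERDOS_2019SCD} on the random error matrix in the linearized Dyson equation; and (d) exploit the fact that for averaged observables $\langle\mb B(\mb g - m)\rangle$ the projection along the ``longitudinal'' direction of $\mathcal L$ yields an extra $1/(N\eta)$ gain over the naive isotropic bound, upgrading the fluctuation contribution to $1/(N\eta)^2$ and leaving a deterministic $1/N$ remainder.

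The main obstacle is step~(b). The region $\{E : \dist(E,\supp(\rho)) \in [\kappa_0, C]\}$ is compact and avoids the spectrum, so $m$ extends real-analytically across it by the Stieltjes representation \eqref{eq m as trafo}, but ensuring that neither $I_l + Am$ nor $I_l + 2m\widehat A$ in \eqref{eq gamma} has a vanishing eigenvalue and that $1 + m^2\gamma'(m)$ is uniformly bounded below requires a dedicated analysis of the scalar equation \eqref{scalar equation}. A natural route is to argue by contradiction: any degeneracy of these quantities at some $E_0$ in this region would, via the defining equation, force $E_0$ to coincide with a spectral edge or an interior point of $\supp(\rho)$, contradicting $|E_0-\tau_\pm|>\kappa_0$; a compactness argument then upgrades pointwise non-degeneracy to a uniform lower bound. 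Once this uniform stability is in hand, the remainder of the proof is a routine adaptation of the error-propagation scheme used for Theorem~\ref{thm LL}.
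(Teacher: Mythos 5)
Your far-from-spectrum step is essentially the paper's strategy (linearization, uniform invertibility of the stability operator, cumulant bounds on the error matrix), but your treatment of the remaining cases has two genuine gaps. First, the near-edge piece at a non-regular edge: the statement covers all polynomials of the form \eqref{eq polynomial}, including shifted reducible ones whose left edge carries a density blow-up, and there Theorem~\ref{thm LL} and its improved bound \eqref{eq ll imp} simply do not apply (they are stated only at regular edges). Your proposed substitute, a ``parallel argument based on the singular-value decomposition of $v^*\mb X-\xi$'', is not developed anywhere and is a nontrivial local-law statement in its own right; Proposition~\ref{prop edge} concerns only the deterministic density, not resolvent concentration. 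The paper avoids this entirely: away from the support no edge information is needed, because the stability of the scalar equation is controlled by $h(m(E))=1/m'(E)$ with $m'(E)=\int\rho(\dd x)(x-E)^{-2}>0$ (Lemma~\ref{lemma m away from rho}), which gives $\Vert\IL^{-1}\Vert_{\mr{sp}}\sim_C1$ on the whole region $C^{-1}\le\dist(E,\supp(\rho))\le C$ (Proposition~\ref{prop stab}, \eqref{eq IL away from rho}) uniformly in the coefficients, with no case split by distance to an edge and no regularity assumption. This also replaces your contradiction-plus-compactness argument in step (b) by a one-line identity; note also that the relevant non-degeneracy is $|1-m^2\gamma'(m)|=|m^2h(m)|\gtrsim1$, not $|1+m^2\gamma'(m)|$.

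Second, two technical points your sketch glosses over. (i) The degenerate family of Definition~\ref{def shifted square} (shifted squares of a Wigner matrix) is excluded from the paper's linearization local laws because the stability operator acquires an extra unstable direction; the paper treats it separately by writing $\mb g(z)=\frac{1}{2\sqrt z}\bigl(\mb g_{\mb W}(\sqrt z)+\mb g_{-\mb W}(\sqrt z)\bigr)$ and citing Wigner local laws (Lemma~\ref{lemma Wigner square}), and your plan would at least have to verify that this degeneracy is harmless in your far region. (ii) The error bounds of \cite{ERDOS_2019SCD} that you invoke in step (c) are expressed through $\Vert\mb G\mb G^*\Vert$, and $\mb G_0$ is a generalized resolvent for which the Ward identity $\mb G\mb G^*=\Im\mb G/\eta$ fails; the paper closes this by the a priori bound of Lemma~\ref{lemma a priori} and the comparison \eqref{eq ward substitution} of $\mb G_0^*\mb G_0$ with $\mb G_1^*\mb G_1$ for the $\delta=1$ regularization, a step your outline omits. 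Finally, in the away regime the averaged gain in your step (d) does not come from a projection onto an unstable direction (there is none there); it comes from the quadratic self-improvement of the Dyson equation combined with the averaged error bound.
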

The local laws, Theorem~\ref{thm LL} and Proposition~\ref{prop LL away}, are proved in Section~\ref{proof LL}. They give us very precise control over the spectral properties of $q(\mb X)$ close to any regular edge as can be seen from the following corollaries,
\begin{corollary}[Edge eigenvector delocalization]
Let the assumptions of Theorem~\ref{thm LL} be satisfied and $\mb v$ be a normalized eigenvector of $q(\mb X)$ with eigenvalue $\lambda$. Then there is a $\kappa_0>0$, only depending on the coefficients of $q$, such that if $|\lambda-\tau_0|\leq\kappa_0$ then we have for all $\veps,D>0$ that
\begin{equation}
\sup_{\mb x\in\C^{N\times N},\Vert \mb x\Vert=1}\Pb\left(|\langle \mb x,\mb v\rangle|\geq N^{-\frac12+\veps}\right)\lesssim_{D,\veps}N^{-D}
\label{eq deloc}
\end{equation}
\label{cor delocalization}
\end{corollary}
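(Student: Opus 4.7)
The plan is to deduce delocalization from the isotropic local law by the standard spectral-decomposition argument. Let $\{\lambda_i,\mb v_i\}$ be the eigenpairs of $q(\mb X)$ and consider the resolvent $\mb g(z)=(q(\mb X)-z)^{-1}$ at the base point $z=\lambda+\I\eta$ with $\eta=N^{-1+\gamma}$ for some small $\gamma>0$ to be chosen. Writing out the spectral decomposition gives
\begin{equation*}
\Im\langle\mb x,\mb g(\lambda+\I\eta)\mb x\rangle \;=\; \eta\sum_i\frac{|\langle\mb x,\mb v_i\rangle|^2}{(\lambda_i-\lambda)^2+\eta^2} \;\geq\; \frac{|\langle\mb x,\mb v\rangle|^2}{\eta},
\end{equation*}
where only the term corresponding to the eigenpair $(\lambda,\mb v)$ is kept in the lower bound.

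Next I would upper bound $\Im\langle\mb x,\mb g(\lambda+\I\eta)\mb x\rangle$ using the isotropic local law from Theorem~\ref{thm LL}. Taking $\kappa_0$ as in that theorem (so that $|\lambda-\tau_0|\leq\kappa_0$ ensures $z\in\mathbb D_\gamma^{\kappa_0}$), the local law yields, with probability at least $1-C_{\veps,\gamma,D}N^{-D}$,
\begin{equation*}
\bigl|\langle\mb x,(\mb g-m)\mb x\rangle\bigr|\;\lesssim\; N^{\veps}\Bigl(\tfrac{\Im m(z)}{\sqrt{N\eta}}+\tfrac{1}{N\eta}\Bigr).
\end{equation*}
By the regular edge assumption (Proposition~\ref{prop square root}/Definition of regular edge), the density $\rho$ has square root behaviour near $\tau_0$, which translates via Stieltjes inversion and standard estimates on Stieltjes transforms of square-root measures into $\Im m(E+\I\eta)\lesssim\sqrt{|E-\tau_0|+\eta}\lesssim 1$ uniformly on $\mathbb D_\gamma^{\kappa_0}$. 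Hence $\Im\langle\mb x,\mb g(\lambda+\I\eta)\mb x\rangle\lesssim 1+N^{\veps}$ with very high probability.

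Combining the two bounds and using $\eta=N^{-1+\gamma}$ then gives
\begin{equation*}
|\langle\mb x,\mb v\rangle|^2\;\leq\;\eta\,\Im\langle\mb x,\mb g(\lambda+\I\eta)\mb x\rangle\;\lesssim\; N^{-1+\gamma+\veps}
\end{equation*}
with the required probability. Choosing $\gamma,\veps$ smaller than the target exponent and using a union bound in the only place where non-trivial uniformity is required (the implicit supremum over the deterministic $\mb x$ with $\|\mb x\|=1$), one obtains the claim after renaming $\gamma+\veps$ as $2\veps$.

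No truly hard step is expected: everything reduces to the isotropic local law at the smallest relevant scale $\eta=N^{-1+\gamma}$, and all that must be checked is the bound $\Im m\lesssim 1$ near $\tau_0$. The only mildly delicate point is ensuring the boundedness of $\Im m$ up to and slightly past the edge, which is precisely the content of the regular edge property proven in Proposition~\ref{prop square root}. The uniformity in $\mb x$ (which is a supremum over a continuum) is handled in the standard way via an $N^{-D}$--net argument together with the Lipschitz dependence of the isotropic quadratic form on $\mb x$ with constant at most $\|\mb g\|\leq\eta^{-1}=N^{1-\gamma}$, absorbed into the $N^\veps$.
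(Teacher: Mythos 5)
Your proposal is correct and follows essentially the same route as the paper: evaluate the resolvent at $z=\lambda+\I\eta$ with $\eta=N^{-1+\gamma}$, keep only the $(\lambda,\mb v)$ term in the spectral decomposition to get $|\langle\mb x,\mb v\rangle|^2/\eta\leq\Im\langle\mb x,\mb g\mb x\rangle$, and bound the latter by $O(N^{\veps})$ via the isotropic local law together with $\Im m\lesssim1$ near the regular edge. (The net argument in $\mb x$ is superfluous, since the supremum in \eqref{eq deloc} sits outside the probability and the local law is already uniform in the deterministic test vectors, but this does not affect correctness.)
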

\begin{corollary}[Eigenvalue rigidity]
    Denote the classical index of the eigenvalue close to energy $E\in\mr{supp}(\rho)$ by
    \begin{equation}
        k(E):=\left\lceil N\int_{-\infty}^E\rho(E')\dd E'\right\rceil,
    \end{equation}
    with $\lceil\cdot\rceil$ being the ceiling function. Let $\rho$ have a regular edge at $\tau_0$. All eigenvalues around $\tau_0$ are close to their classical position in the following sense. There is a $\kappa_0>0$, only depending on the coefficients of $q$, such that
    \begin{equation}
        \Pb\left(\mr{sup}_E|\lambda_{k(E)}-E|\geq\min\left\{\frac{N^\veps}{N|E-\tau_0|},\frac{N^\veps}{N^{\frac23}}\right\}\right)\lesssim_{D,\veps} N^{-D}.
    \end{equation}
    holds for all $\veps,D>0$ and $E\in\mr{supp}(\rho)$ with $|E-\tau_0|\leq\kappa_0$.
    \label{cor rigidity}
\end{corollary}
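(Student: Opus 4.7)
The plan is to deduce the rigidity from Theorem~\ref{thm LL} by the standard Helffer--Sj\"ostrand route, using the regular-edge behaviour of $\rho$ established in Proposition~\ref{prop square root}. Introduce the empirical and classical counting functions
\[
\mathfrak{n}_N(E) := \frac{1}{N}\#\{i:\lambda_i(q(\mb X)) \leq E\}, \qquad n(E) := \int_{-\infty}^E \rho(E')\,\dd E',
\]
so that $k(E)=\lceil N\,n(E)\rceil$. The argument has two steps: a uniform counting-function bound, followed by a density-based inversion.

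In the first step I aim to establish
\[
|\mathfrak{n}_N(E) - n(E)| \leq N^{\veps}\min\!\left\{\frac{1}{N\sqrt{|E-\tau_0|}},\,\frac{1}{N^{2/3}}\right\}
\]
with overwhelming probability, uniformly for $E$ with $|E-\tau_0|\leq\kappa_0$. By Theorem~\ref{theorem norm} applied at both edges, all eigenvalues lie in $[\tau_--\kappa_0,\tau_++\kappa_0]$ with overwhelming probability, so one may restrict to this interval. The Helffer--Sj\"ostrand representation then writes $\mathfrak{n}_N(E)-n(E)$ as a contour integral of $\langle\mb g(z)-m(z)\rangle = N^{-1}\Tr(\mb g(z)-m(z))$ against an almost-analytic extension of a smoothed $\mathbb{1}_{(-\infty,E]}$. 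I split the $\eta$-range into $(0,N^{-1+\gamma}]$, $[N^{-1+\gamma},\kappa_0]$, and $[\kappa_0,\infty)$. The small-$\eta$ piece is absorbed via the monotonicity $\eta\mapsto\eta\,\Im\langle\mb g(E+\I\eta)\rangle$; the intermediate piece uses the averaged local law \eqref{eq ll av} when $E\in\supp(\rho)$ and the improved version \eqref{eq ll imp} when $E\notin\supp(\rho)$, the singular weight $|z-\tau_0|^{-1/2}$ of \eqref{eq ll imp} producing after integration exactly the $|E-\tau_0|^{-1/2}$ factor in the counting bound; the large-$\eta$ piece is handled by Proposition~\ref{prop LL away}. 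Uniformity in $E$ is obtained by discretising $E$ on an $N^{-C}$-grid and using monotonicity of $\mathfrak{n}_N$ and $n$, which turns the pointwise overwhelming-probability estimate into a supremum bound.

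In the second step I invert this using the square-root density. Proposition~\ref{prop square root} gives $\rho(E')\sim\sqrt{(E'-\tau_0)_+}$ on the support side near $\tau_0$, hence for $E,E'$ within $\kappa_0$ of $\tau_0$,
\[
|n(E)-n(E')|\sim|E-E'|\sqrt{\max\{|E-\tau_0|,\,|E-E'|\}}.
\]
Evaluating the counting estimate at $\lambda:=\lambda_{k(E)}$, which by definition of $k(E)$ satisfies $|\mathfrak{n}_N(\lambda)-n(E)|\leq N^{-1}$, and solving for $|\lambda-E|$ produces both branches of the minimum: when $|E-\tau_0|\gtrsim N^{-1/3}$ the first-branch counting bound $N^{\veps-1}/\sqrt{|E-\tau_0|}$ combined with $\rho\sim\sqrt{|E-\tau_0|}$ yields $|\lambda-E|\lesssim N^{\veps}/(N|E-\tau_0|)$; when $|E-\tau_0|\lesssim N^{-1/3}$ the uniform counting bound $N^{\veps-2/3}$ together with the scale $N^{-2/3}$ around the edge yields $|\lambda-E|\lesssim N^{\veps-2/3}$.

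The main obstacle is the careful Helffer--Sj\"ostrand bookkeeping combining \eqref{eq ll av} and \eqref{eq ll imp} in a single integral while tracking the borderline-integrable factor $|z-\tau_0|^{-1/2}$ across the transition zone $|E-\tau_0|\sim\eta$, where the structure of the resolvent changes between the inside- and outside-of-spectrum regimes. Without the improvement \eqref{eq ll imp}, the plain averaged law would produce only $|\lambda_{k(E)}-E|\lesssim N^{\veps-1}/\sqrt{|E-\tau_0|}$, which fails to yield the stronger $N^{\veps}/(N|E-\tau_0|)$ branch of the claimed min.
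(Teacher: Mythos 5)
There is a genuine gap: your argument omits the step that is the actual heart of the paper's proof, namely the absence of eigenvalues beyond the edge. The paper first combines the improved averaged law \eqref{eq ll imp}, Proposition~\ref{prop LL away} and the trivial bound \eqref{trivial bound} to show that with very high probability no eigenvalue lies in $[\tau_++N^{-2/3+\veps},\infty)$, and only then runs the standard counting-function comparison of \cite[Ch.~11.2--11.4]{Erdos_Yau_2017}. Your counting-function estimate cannot substitute for this: a single outlier at distance of order one from $\tau_+$ changes $\mathfrak{n}_N$ by only $1/N$, which is invisible at your claimed accuracy $N^{\veps}\min\{(N\sqrt{|E-\tau_0|})^{-1},N^{-2/3}\}\gg N^{-1}$, and since $n$ is constant to the right of $\tau_+$, the inversion in your Step~2 gives no control on $\lambda_{k(E)}$ whenever this eigenvalue lies beyond the edge --- which is exactly the uncontrolled situation for the top $\sim N^{1/3}$ indices, i.e.\ precisely in the $N^{-2/3}$-branch of the claim. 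Invoking Theorem~\ref{theorem norm} to confine the spectrum does not repair this: besides being circular (in the paper Theorem~\ref{theorem norm} is deduced from this very corollary together with Proposition~\ref{prop square root}), confinement to $[\tau_--\kappa_0,\tau_++\kappa_0]$ is far too coarse; what is needed is confinement at scale $N^{-2/3+\veps}$, and the crude restriction should instead come from \eqref{trivial bound}, as in the paper.

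There is also a quantitative inconsistency in Step~2: even granting the counting bound of Step~1, its near-edge accuracy $N^{\veps-2/3}$ is too weak for the inversion. Since $\rho\sim\sqrt{|\,\cdot\,-\tau_0|}$ near $\tau_0$, a counting error of size $N^{\veps-2/3}$ only yields $|\lambda_{k(E)}-E|\lesssim N^{\veps'-4/9}$ when $|E-\tau_0|\lesssim N^{-2/3}$ (and only $N^{\veps'-1/2}$ at $|E-\tau_0|\sim N^{-1/3}$), not $N^{\veps-2/3}$; to reach the claimed rate one needs counting accuracy $\prec N^{-1}$ near the edge, which in the standard argument is obtained exactly by combining \eqref{eq ll av}, \eqref{eq ll imp} and the no-eigenvalue statement. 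So the gap statement you skipped is not an optional shortcut: it is required both to anchor the counting function above $\tau_+$ and to obtain the $N^{-2/3}$ branch. Your overall route (Helffer--Sj\"ostrand plus square-root inversion) is the same ``standard argument'' the paper cites, so once you insert the paper's eigenvalue-exclusion step \eqref{eq no eigenvalues 1}--\eqref{eq no eigenvalues 3} and correct the counting accuracy, the proof goes through.
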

\begin{proof}[Proof of Theorem~\ref{theorem norm}]
        As the norm of any Hermitian matrix $H$ with non-decreasing eigenvalues $\lambda_i$, $i\in\llbracket n\rrbracket$, is given by $\Vert H\Vert=\max\{|\lambda_1|,|\lambda_n|\}$, Theorem~\ref{theorem norm} follows as a special case of Corollary~\ref{cor rigidity} in conjunction with Proposition~\ref{prop square root}, which states that all edges of non-reducible polynomials are regular.
\end{proof}

\section{Properties of the spectral density}

We first state two propositions which describe the behaviour of the Stieltjes transform in the upper half-plane close to the edges of the spectrum. The first proposition concerns non-reducible polynomials, whereas the second one covers shifted reducible polynomials.
Subsequently, we conclude Propositions~\ref{prop square root} and \ref{prop edge} from Propositions~\ref{prop m non-red} and \ref{prop m red} and state Corollary \ref{cor m reg}. It summarizes important properties of the Stieltjes transform close to any regular edge and is used to prove the local law, Theorem~\ref{thm LL}. Afterwards, we prove Propositions~\ref{prop m non-red} and \ref{prop m red} in the remainder of the section.

\begin{prop}[Stieltjes transform for non-reducible polynomials]
    Let $q$ be a non-reducible quadratic polynomial. Then we have the following behaviour of $m$ close to the edges.
    \begin{enumerate}
        \item There are $m_+<0$, $c_+>0$ and $u>0$ such that for all $z\in\HH$ with $|z-\tau_+|\leq u$ the Stieltjes transform of the limiting spectral density $m=m(z)$ satisfies
        \begin{equation}
            m-m_+=c_+\sqrt{z-\tau_+}+\mc O(|z-\tau_+|).
        \end{equation}
        \item There are $m_->0$, $c_->0$ and $u>0$ such that for all $z\in\HH$ with $|z-\tau_+|\leq u$ the Stieltjes transform of the limiting spectral density $m=m(z)$ satisfies
        \begin{equation}
            m-m_-=-c_-\sqrt{\tau_--z}+\mc O(|\tau_--z|).
        \end{equation}
    \end{enumerate}
    Here, $\sqrt{\cdot}$ denotes the square root function that maps the positive real axis to itself and with a branch cut along the negative real axis.
    \label{prop m non-red}
\end{prop}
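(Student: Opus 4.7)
The plan is to invert the scalar equation \eqref{scalar equation} by introducing
\[
h(m) := -\frac{1}{m} - \gamma(m),
\]
so that \eqref{scalar equation} reads $z = h(m)$, and to perform a Taylor expansion of $h$ around real critical points $m_\pm$ that map to the two edges. Since $\supp(\rho)$ is compact and $m(z)\sim -1/z$ at infinity, the boundary values $m_\pm := \lim_{z\to\tau_\pm,\,z\in\HH} m(z)$ exist, are real and finite, and avoid the finitely many singularities of $\gamma$; hence $h$ is real-analytic on open neighborhoods of $m_\pm$ in $\R$. Differentiating $z = h(m(z))$ gives $h'(m(z))\,m'(z) = 1$, and because $\rho$ has a single-interval support without atoms, $m'(z) = \int (x-z)^{-2}\rho(\dd x)$ diverges as $z$ approaches either edge, forcing $h'(m_\pm) = 0$. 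Thus the edges correspond to real critical points of $h$.

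Provided $h''(m_\pm) \neq 0$, the Taylor expansion
\[
z - \tau_\pm \;=\; \frac{h''(m_\pm)}{2}(m - m_\pm)^2 \;+\; \mc O(|m - m_\pm|^3)
\]
can be inverted by a standard Puiseux argument; selecting the branch that sends $\HH$ into $\HH$ yields
\[
m(z) - m_\pm \;=\; \pm\, c_\pm\, \sqrt{\pm(z-\tau_\pm)} \;+\; \mc O(|z-\tau_\pm|),\qquad c_\pm := \sqrt{2/|h''(m_\pm)|}.
\]
The strict signs $m_+ < 0$ and $m_- > 0$ are read off from the Stieltjes representation \eqref{eq m as trafo}: for real $z$ slightly above $\tau_+$ one has $m(z) = \int(x-z)^{-1}\rho(\dd x) < 0$, and analogously $m(z) > 0$ for real $z$ slightly below $\tau_-$. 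Compatibility of the square-root expansions with $\Im m > 0$ on $\HH$ then fixes the sign convention and, in particular, the strict inequalities $h''(m_+) > 0$ and $h''(m_-) < 0$.

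The main technical obstacle is verifying $h''(m_\pm) \neq 0$ for every non-reducible polynomial. Differentiating the explicit formula \eqref{eq gamma} by means of the identity
\[
\frac{\dd}{\dd m}(I_l + Am)^{-1} = -(I_l+Am)^{-1}A(I_l+Am)^{-1}
\]
(and its analogue applied to $(I_l+2m\widehat{A})^{-2}(I_l+m\widehat{A})$) yields $h'(m_0)$ and $h''(m_0)$ as rational expressions in $m_0$ whose coefficients are determined by the spectral decomposition of $A$ and the components of $b$. I expect non-vanishing of $h''(m_\pm)$ to be obtained by contraposition: if both $h'(m_0) = 0$ and $h''(m_0) = 0$ hold at some real $m_0$ mapping to an edge, then the resulting algebraic constraints on $(A,b,c)$ should rigidly force the shifted reducible structure $A = \alpha vv^*$, $b = -\alpha\xi(v + \bar v)$, $c = \alpha|\xi|^2 - \beta$ characterizing \eqref{eq shifted red}. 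Because $A$ and $\widehat{A} = \tfrac12(A+A^t)$ do not commute in general (they coincide only when $A$ is real symmetric), a convenient first step is to diagonalize $A$ by a unitary transformation and to track how $\widehat{A}$ and $b$ transform in the new basis, reducing the problem to an algebraic identity among the eigenvalues of $A$ and the coordinates of $b$.
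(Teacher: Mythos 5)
Your overall strategy --- inverting $z=-1/m-\gamma(m)$ around a real critical point and performing a Puiseux expansion, with the sign fixed by $\Im m>0$ --- is the same as the paper's, but the two inputs you take for granted are exactly the statements carrying the real content, and the justifications you give for them do not hold. First, you assert that the boundary values $m_\pm=\lim_{z\to\tau_\pm}m(z)$ are finite and avoid the singularities of $\gamma$ ``since $\supp(\rho)$ is compact and $m\sim-1/z$''. Neither follows: the shifted reducible polynomials satisfy the same hypotheses and yet $m(z)\to\infty$ as $z\to\tau_-$ in the hard-edge cases (Proposition~\ref{prop m red}). Finiteness of $m_\pm$ is in essence equivalent to the existence of a critical point of $z(\cdot)$ in the pole-free intervals $(m_+^*,0)$ and $(0,m_-^*)$ from \eqref{eq largest pole}, i.e.\ to the existence of a root of $h=1/m^2-\gamma'$ there, and establishing this is precisely where non-reducibility enters and where most of the paper's work lies (Lemma~\ref{lemma h non-red}, via Lemmas~\ref{lemma quad stab} and \ref{lemma root conditions}). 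Second, your argument that $h'(m_\pm)=0$ (in your notation) because single-interval support without atoms forces $m'(z)=\int(x-z)^{-2}\rho(\dd x)$ to diverge at the edge is a non sequitur: a density vanishing like $(\tau_+-x)^{3/2}$ has single-interval support, no atoms, and bounded $m'$ at the edge. The workable route, and essentially the one the paper uses to identify $\tau_\pm$ with the critical values, is a continuation argument: if $z'(m_+)\neq0$, the local inverse would extend $m$ real-analytically through $\tau_+$, and Stieltjes inversion would force $\rho$ to vanish in a neighbourhood of $\tau_+$, contradicting $\tau_+\in\supp(\rho)$.

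Finally, the step you single out as the main technical obstacle, the non-degeneracy $z''(m_\pm)\neq0$, is only ``expected'' via a contraposition you do not carry out, and it is aimed at the wrong place. In the paper, non-degeneracy is not where reducibility is excluded: Lemma~\ref{lemma quad stab} shows that for every polynomial of the form \eqref{eq polynomial} one has $h'(m_0)>0$ at any point $m_0$ of the relevant interval where $h(m_0)\geq0$, so any finite critical point of $z(\cdot)$ in $(m_+^*,0)$ or $(0,m_-^*)$ is automatically first order, reducible or not. What distinguishes the reducible cases is that on one side no finite critical point exists at all (the limit $m_-$ is $+\infty$, or the root degenerates to infinity in the threshold cases $\xi=2$, $s\xi=2$), i.e.\ exactly the finiteness you assumed for free at the outset. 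As written, your proposal therefore invokes the non-reducibility hypothesis where it is not needed and omits the argument where it is indispensable; to complete it you would have to supply the analogue of Lemmas~\ref{lemma quad stab} and \ref{lemma root conditions}.
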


\begin{prop}[Stieltjes transform for shifted reducible polynomials]
        Let $q$ be a shifted reducible polynomial of the form \eqref{shifted normalized}. Then we have the following behaviour of $m$ close to the edges.
    \begin{enumerate}
        \item There are $m_+<0$, $c_+>0$ and $u>0$ such that for all $z\in\HH$ with $|z-\tau_+|\leq u$ the function $m=m(z)$ satisfies
        \begin{equation}
            m-m_+=c_+\sqrt{z-\tau_+}+\mc O(|z-\tau_+|).
        \end{equation}
        \item There are $c_->0$ and $u>0$ such that for all $z\in\HH$ with $|z-\tau_-|\leq u$ the function $m=m(z)$ satisfies
        \begin{equation}
            m=
            \begin{cases}
                c_-(\tau_--z)^{-\frac12}+\mc O(1) &\text{if }\xi=0\\
                c_-(\tau_--z)^{-\frac12}+\mc O(1) &\text{if }v\in\R^l \text{ and } \xi<2\\
                c_-(\tau_--z)^{-\frac14}+\mc O(1) &\text{if }v\in\R^l \text{ and } \xi=2\\
                c_-(\tau_--z)^{-\frac12}+\mc O(1) &\text{if }v\in\C^l\setminus e^{\I\phi}\R^l \text{ for all } \phi\in(-\pi,\pi] \text{ and } s\xi<2\\
                c_-(\tau_--z)^{-\frac13}+\mc O(1) &\text{if }v\in\C^l\setminus e^{\I\phi}\R^l \text{ for all } \phi\in(-\pi,\pi] \text{ and } s\xi=2,
            \end{cases}
        \end{equation}
        where $s$ was defined in \eqref{eq s}.
        If none of the above conditions is satisfied, there is additionally an $m_->0$ such that for all $z\in\HH$ with $|z-\tau_-|\leq u$ we have
        \begin{equation}
            m-m_-=-c_-\sqrt{\tau_--z}+\mc O(|\tau_--z|).
        \end{equation}
        The function $\zeta\mapsto\zeta^p$ is chosen such that the positive real axis is mapped to itself and with a branch cut along the negative real axis for all $p\in\R\setminus\Z.$
    \end{enumerate}
    \label{prop m red}
\end{prop}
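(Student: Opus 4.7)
The plan is to work directly with the scalar equation $-m^{-1} = z + \gamma(m)$ and to study the function $z\colon m \mapsto -m^{-1} - \gamma(m)$ whose appropriately chosen inverse branch recovers the Stieltjes transform $m = m(z)$. The edge asymptotics then follow from a local expansion of $z(m)$ around the relevant point $m_\pm$ (finite, infinite, or at a pole of $\gamma$) combined with Puiseux-type inversion. First I specialize $\gamma(m)$ to the shifted reducible case $A = vv^*$, $b = -2\xi\Re v$, $c = \xi^2$. Sherman--Morrison on the rank-one $A$ yields $\Tr A(I_l + Am)^{-1} = (1+m)^{-1}$. For the bilinear term I use that $\widehat{A} = pp^t + qq^t$ (with $p = \Re v$, $q = \Im v$) has rank at most two; diagonalizing it on $\Span(p,q)$ gives eigenvalues $\lambda_\pm$ whose expressions in $\sigma^2$ and $\mu$ are explicit. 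Writing $p = \alpha_+ u_+ + \alpha_- u_-$ in the orthonormal eigenbasis $\{u_\pm\}$, the bilinear form becomes a sum of two scalar terms, and $\gamma(m)$ takes the explicit rational form with simple pole at $m = -1$ and double poles at $m = -(2\lambda_\pm)^{-1}$. The locations and orders of these poles determine the possible singular left-edge exponents.

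For the right edge, $z(m)$ has a simple (quadratic) critical point at some $m_+ \in (-1, 0)$ with $z(m_+) = \tau_+$, and the implicit function theorem supplies the expansion $m - m_+ = c_+ \sqrt{z - \tau_+} + \mc O(|z - \tau_+|)$; the choice of branch and the sign of $c_+$ are fixed by $m\colon \HH \to \HH$. This step is essentially identical to the non-reducible right edge in Proposition~\ref{prop m non-red}(1), and no new ingredient is required.

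The left-edge analysis splits according to how the branch of $m(z)$ in $\HH$ approaches either a pole of $\gamma$ or a finite critical point of $z$ as $z \downarrow \tau_-$. If $\xi = 0$, then $b = c = 0$ and $\gamma(m) = -(1+m)^{-1}$, so $z(m) = -(m(1+m))^{-1}$ and direct inversion gives $m(z) = (-z)^{-1/2} + \mc O(1)$ as $z \to 0 = \tau_-$. If $v$ is real, $\widehat{A}$ has only the nonzero eigenvalue $\lambda_+ = 1$ and $z(m)$ has a double pole at $m = -1/2$; $\xi = 2$ is the bifurcation value at which this pole collides with a critical point of $z$, yielding the exponent $-1/4$ instead of the generic $-1/2$ (for $\xi < 2$, when the pole is active at the edge) or the regular $+1/2$ (for $\xi > 2$, when the pole detaches from the edge). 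If $v$ has linearly independent real and imaginary parts, $\widehat{A}$ has two distinct eigenvalues and $\gamma$ has two double poles; the parameters $a_\pm$ in \eqref{eq a_pm} are precisely the real roots of the critical-point equation $z'(m) = 0$ on the relevant branch, and $s$ in \eqref{eq s} encodes when two such critical points collapse onto a pole of $\gamma$, so that $s\xi = 2$ is the codimension-one coalescence surface producing the exponent $-1/3$.

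The main obstacle is the coalescence analysis at $\xi = 2$ (real case) and $s\xi = 2$ (complex case). In both one must verify that $z(m) - \tau_-$ vanishes to exactly the required order at the coalesced point, with nonzero leading coefficient, so that Puiseux inversion produces the claimed fractional exponent. The algebra for $s\xi = 2$ is the most delicate, as it requires checking an identity linking the critical-point condition to the explicit definitions of $a_\pm$ and $s$ in order to extract the cube-root behavior and to rule out accidental higher-order degeneracies. In all remaining sub-cases, once the correct pole/critical-point structure is identified, the expansion reduces to a standard application of the implicit function theorem, yielding either a square-root singularity of the density or a regular $+1/2$ edge, and Proposition~\ref{prop edge} then follows via the Stieltjes inversion formula \eqref{rho from m}.
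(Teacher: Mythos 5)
Your overall strategy (invert $z(m)=-m^{-1}-\gamma(m)$ and extract the edge behaviour from a local expansion plus Puiseux inversion) is the same as the paper's, your right-edge argument is fine, and the $\xi=0$ computation $z(m)=-(m(1+m))^{-1}$ is correct. The genuine problem is the mechanism you assign to the singular left-edge cases. For $q_r$ as in \eqref{shifted normalized} one has $A=vv^*\geq0$, hence $\widehat A\geq0$, so \emph{all} poles of $\gamma$ and of $h(m)=m^{-2}-\gamma'(m)$ from \eqref{function h} lie in $(-\infty,0)$ (at $m=-1$ and $m=-(2\widehat\mu_i)^{-1}$), while the branch of $m(z)$ relevant for the left edge lives on $(0,\infty)$ (for real $z<\tau_-$ one has $m(z)=\int\rho(\dd x)/(x-z)>0$). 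That branch therefore never approaches a finite pole of $\gamma$; in every singular sub-case it diverges, $m\to+\infty$, as $z\to\tau_-=0$. The exponents are consequently not produced by ``a pole colliding with a critical point'' at $m=-1/2$ or $m=-(2\lambda_\pm)^{-1}$: they come from the decay rate of $h$ at infinity, $h(m)\sim m^{-p}$ with $p\in\{3,4,5\}$, since integrating $\dd z/\dd m=h$ gives $\tau_--z\sim m^{-(p-1)}$ and hence $m\sim(\tau_--z)^{-1/(p-1)}$. The thresholds $\xi=2$ (real $v$) and $s\xi=2$ are exactly the values at which the coefficient of $m^{-3}$ in this expansion at $m=\infty$ vanishes (in the real case the $m^{-4}$ coefficient vanishes simultaneously, giving $p=5$); they are degeneracies at $m=\infty$, not coalescences at a finite pole. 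Relatedly, the $a_\pm$ of \eqref{eq a_pm} are not roots of $z'(m)=0$: they are the coefficients for which $\Re v+a_\pm\Im v$ are eigenvectors of $\widehat A$, and $s$ in \eqref{eq s} is built so that the sign of $2-s\xi$ is the sign of that leading coefficient. An expansion ``at the coalesced point'' at finite $m$, which is what you propose to verify, would not produce the claimed fractional exponents.

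Two further ingredients are missing even after correcting the mechanism. First, you must prove that in the singular sub-cases $h$ has \emph{no} zero on all of $(0,\infty)$ (otherwise the branch terminates at a finite critical point and the edge is regular); in the paper this is Lemma~\ref{lemma h red} via Lemma~\ref{lemma root conditions}, and it needs the monotonicity statement ``$h\geq0$ implies $h'>0$'' (Lemma~\ref{lemma quad stab}) to upgrade positivity of the leading asymptotic coefficient to positivity of $h$ on the whole half-line. Second, you must identify $\tau_-$: in the singular cases it is not a critical value but $\tau_-=\lim_{m\to\infty}z(m)$, and one has to show this limit equals $0$ and that the inverse of $z(\cdot)$ on $(0,\infty)$ really coincides with the (extension of the) Stieltjes transform near the left edge — the paper does this by matching the $-z^{-1}$ asymptotics of the two analytic functions and using that neither extends analytically past the edge. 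Without these steps the case distinction in the statement, the value $\tau_-=0$, and the constants $c_-$ are not justified.
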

\begin{corollary}
    Let $q(\mb X)$ have a regular edge at $\tau_0$ and $m_0=m(\tau_0)$. There is a $u>0$ such that the function $m=m(z)$ satisfies
    \begin{equation}
        |z-\tau_0|\sim|m-m_0|^2
    \end{equation}
    and
    \begin{equation}
        \Im m\sim
        \begin{cases}
            \sqrt{|E-\tau_0|+\eta} &\text{if }E\in\supp(\rho)\\
            \frac{\eta}{\sqrt{|E-\tau_0|+\eta}}&\text{if }E\notin\supp(\rho)
        \end{cases}
    \end{equation}
    for all $z=E+\I\eta\in\HH$ with $|z-\tau_0|\leq u$.
    \label{cor m reg}
\end{corollary}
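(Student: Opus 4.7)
The plan is to read off both asymptotics directly from the square root expansion of $m$ at the regular edge supplied by Propositions~\ref{prop m non-red} and \ref{prop m red}. At a regular edge $\tau_0$ these propositions give, after absorbing signs into a single complex square root, an expansion of the form
\begin{equation*}
    m(z)-m_0 = c\,\sqrt{\sigma(z-\tau_0)} + \mathcal{O}(|z-\tau_0|),
\end{equation*}
where $\sigma=+1$ at the right edge and $\sigma=-1$ at the left edge, and $c\in\C$ is a nonzero constant whose sign matches the required $\Im m>0$ condition on $\HH$. In particular $|c|\sim 1$. First I would choose $u>0$ so small (depending only on the coefficients of $q$) that the error term is dominated by the leading one throughout the disk $|z-\tau_0|\le u$.

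With this expansion in hand, the estimate $|z-\tau_0|\sim|m-m_0|^2$ is then a one-line consequence: squaring gives $|m-m_0|^2=|c|^2|z-\tau_0|\bigl(1+\mathcal{O}(|z-\tau_0|^{1/2})\bigr)$, and shrinking $u$ further makes the multiplicative error lie in $[1/2,2]$, say.

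For the imaginary part I would write $w:=\sigma(z-\tau_0)=re^{\I\theta}$ with $r=|z-\tau_0|$ and $\theta\in(0,\pi)$ (this range is correct because $\Im z>0$ and for both edges the convention for $\sigma$ keeps $w$ in the open upper half-plane). The principal square root then gives $\Im\sqrt{w}=\sqrt{r}\sin(\theta/2)$, which is comparable to $\Im m$ up to the $\mathcal{O}(|z-\tau_0|)$ error and the sign conventions of Propositions~\ref{prop m non-red}--\ref{prop m red}. I would then split into two cases depending on whether $E\in\supp(\rho)$:
\begin{itemize}
\item If $E\in\supp(\rho)$, then $\sigma(E-\tau_0)\le 0$, so $\theta\in[\pi/2,\pi)$ once $\eta\lesssim|E-\tau_0|$; conversely if $\eta\gtrsim|E-\tau_0|$ then $\theta$ is bounded away from $0$ and $\pi$. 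Either way $\sin(\theta/2)\sim 1$, hence $\Im m\sim\sqrt{r}\sim\sqrt{|E-\tau_0|+\eta}$.
\item If $E\notin\supp(\rho)$, then $\sigma(E-\tau_0)>0$ and $\sin(\theta/2)\sim\sin\theta=\eta/r$, so $\Im m\sim\sqrt{r}\cdot\eta/r=\eta/\sqrt{r}\sim\eta/\sqrt{|E-\tau_0|+\eta}$.
\end{itemize}
In both regimes I would use $r=\sqrt{(E-\tau_0)^2+\eta^2}\sim|E-\tau_0|+\eta$ to match the form stated in the corollary.

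I do not foresee a real obstacle here: the whole statement is an elementary computation once the square root expansion is available, and the only care needed is to keep track of which branch/sign is used at the left versus right edge. The mildly delicate point is verifying that $\sin(\theta/2)\sim 1$ uniformly on the inside region $E\in\supp(\rho)$ including the crossover regime $\eta\sim|E-\tau_0|$; this I would handle by the two-sided bound $\sin(\theta/2)\in[\sin(\pi/4),1]$ on $\theta\in[\pi/2,\pi)$ combined with the observation that when $|E-\tau_0|\lesssim\eta$ the argument $\theta$ is trapped in a compact subinterval of $(0,\pi)$ away from both endpoints.
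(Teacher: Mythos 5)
Your overall strategy (reading everything off the square--root expansions of Propositions~\ref{prop m non-red} and \ref{prop m red}) is the intended one -- the paper states Corollary~\ref{cor m reg} without a separate argument, precisely as a consequence of those expansions -- and your treatment of $|z-\tau_0|\sim|m-m_0|^2$ and of $\Im m\sim\sqrt{|E-\tau_0|+\eta}$ for $E\in\supp(\rho)$ is fine, since there the leading term dominates the $\mc O(|z-\tau_0|)$ remainder. However, there is a genuine gap in the case $E\notin\supp(\rho)$. There the imaginary part of the leading term is of size $\eta/\sqrt{\kappa+\eta}$ with $\kappa=|E-\tau_0|$, and in the regime $\eta\ll\kappa^{3/2}$ this is much \emph{smaller} than the only information you have about the remainder, namely $|\text{error}|\leq C|z-\tau_0|$. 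So neither the upper nor the lower bound in $\Im m\sim\eta/\sqrt{\kappa+\eta}$ follows from ``main term plus $\mc O(|z-\tau_0|)$'': the error could a priori contribute an imaginary part of order $\kappa\gg\eta/\sqrt{\kappa}$. Closing this requires an extra input, e.g.\ (i) observe that the remainder $f(z):=m-m_0-c\sqrt{\sigma(z-\tau_0)}$ is analytic in the slit disk and \emph{real} on the real axis outside $\supp(\rho)$, so by reflection and a Cauchy estimate $|f'|\lesssim1$ at distance $\sim\kappa$ from the support, whence $|\Im f(E+\I\eta)|\lesssim\eta$ when $\eta\lesssim\kappa$ (the regime $\eta\gtrsim\kappa$ being covered by the crude bound), and $\eta\leq\sqrt{u}\,\eta/\sqrt{\kappa+\eta}$ lets you absorb this after shrinking $u$; or (ii) bypass the expansion outside the support altogether and compute $\Im m(E+\I\eta)=\int\eta\,\rho(\dd x)/((x-E)^2+\eta^2)$ using the square-root behaviour of the density near $\tau_0$ (Proposition~\ref{prop square root}/\ref{prop edge}) together with its boundedness, which gives the two-sided bound directly.

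A second, minor point: your claim that the convention for $\sigma$ keeps $w=\sigma(z-\tau_0)$ in the open upper half-plane is false at the left edge, where $w=\tau_--z$ has $\Im w=-\eta<0$ (equivalently, the left-edge expansion should be written as $m-m_-=\I c_-\sqrt{z-\tau_-}+\mc O(|z-\tau_-|)$ if you want the argument of the square root in $\HH$). This does not damage the estimates, since only $|\Im\sqrt{w}|$ enters and the sign of the prefactor restores $\Im m>0$, but the bookkeeping as written is inconsistent and should be corrected.
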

\begin{proof}[Proof of Proposition~\ref{prop square root} and Proposition~\ref{prop edge}]
    By \eqref{rho from m} we have $\rho(E)=\lim_{\eta\searrow0}\pi^{-1}m(E+\I\eta)$ on any interval $I\subset\R$ on which the limit exists for all $E\in I$. For the cases in Proposition~\ref{prop m non-red} and Proposition~\ref{prop m red} where $m-m_0$ shows a square root behaviour around an edge $\tau_0$, we take the limit on $(\tau_0-u,\tau_0+u)$ to prove that the edge is regular. For the cases in Proposition~\ref{prop m red} where $m$ diverges at $\tau_-$, we take the limit on $(\tau_--u,\tau_-+u)\setminus\{\tau_-\}$ to obtain the respective asymptotic behaviour.
\end{proof}

Let $\mu_i\in\R$ denote the eigenvalues of $A$, $\widehat{\mu_i}\in\R$ the eigenvalues of $\widehat{A}$ and let $ w_i\in\R^l$ be an orthonormal set of eigenvectors of $\widehat{A}$ corresponding to the $\widehat{\mu_i}$. By \eqref{eq gamma} the function $\gamma$ is defined in terms of these quantities as
\begin{equation}
    \gamma(m)=-\sum_{i=1}^l\frac{\mu_i}{1+m\mu_i}+\sum_{i=1}^l\frac{|\langle  w_i, b\rangle|^2(1+m\widehat\mu_i)}{(1+2m\widehat\mu_i)^2}m 
    -c.
\end{equation}
From now on we consider $\gamma$ to be defined on its maximal domain, $\C\setminus\IS(\gamma)$, where 
\begin{equation}
    \IS(\gamma):=\{-\mu_i^{-1}\in\R: \,\mu_i\neq0\}
        \cup\{-(2\widehat{\mu}_i)^{-1}\in\R: \,\widehat{\mu}_i\neq0\text{ and }\langle w_i, b\rangle\neq0\}
\end{equation}
denotes the poles of $\gamma$. We also require the function $h$, which we define as follows.
\begin{definition}
    Let $h: \C\setminus\IS(h)$ be given by
    \begin{equation}
        h(m):=\frac{1}{m^2}-\gamma'(m)=\frac{1}{m^2}-\sum_{i=1}^l \frac{\mu_i^2}{(1+m\mu_i)^2}-\sum_{i=1}^l\frac{|\langle  w_i, b\rangle|^2}{(1+2m\widehat\mu_i)^3}.
        \label{function h}
    \end{equation}
    Here, $\IS(h):=\IS(\gamma)\cup\{0\}$ denotes the set of poles of $h$ and $\gamma'(m)$ is the derivative of $\gamma$ with respect to $m$.
    \label{def h}
\end{definition}
Proposition~\ref{prop SCE} asserts that $m$ is a Stieltjes transform of the measure $\rho,$ which has real and compact support. Thus $m$ is analytic and has positive imaginary part on $\HH$. We take the derivative of \eqref{scalar equation} with respect to $z$ on $\HH$ to find
\begin{equation}
    m'=\frac{1}{{h}(m)}.
\end{equation}
As $m$ is a Stieltjes transform of $\rho$, its analyticity extends to $\C\setminus\supp(\rho)$ and the above equation
holds on $\overline{\HH}\setminus\supp(\rho),$ where $\overline{\HH}=\HH\cup\R$.
Next, we define
\begin{align}
        m_-^*&=
        \begin{cases}
            \min((0,\infty)\cap\IS(h))&\text{if }(0,\infty)\cap\IS(h)\neq\emptyset\\
            \infty&\text{otherwise},
        \end{cases}\\
        m_+^*&=
        \begin{cases}
            \max((-\infty,0)\cap\IS(h))&\text{if }(-\infty,0)\cap\IS(h)\neq\emptyset\\
            -\infty&\text{otherwise.}
        \end{cases}
        \label{eq largest pole}
\end{align}
In other words, $(m_+^*,0)$ and $(0,m_-^*)$ are the maximal intervals to the left and the right of the origin, where $h$ is continuous.  The following lemmata describe the existence and characterization of roots of $h$ both for the shifted reducible and the non-reducible case on $(m_+^*,0)$ and $(0,m_-^*)$.

\begin{lemma}
    \label{lemma h non-red}
    Let $q$ be a non-reducible quadratic polynomial. Then we have the following. 
    \begin{enumerate}
        \item\label{lemma h non-red part 1} The function $h$ has a unique root $m_-$ in $(0,m_-^*)$ and $h$ has a unique root $m_+$ in $(m_+^*,0)$. Both of them are of first order. More precisely, they satisfy
    \begin{equation}
        \pm h'(m_\pm)>0.
    \end{equation}
    \item\label{lemma h non-red part 2} The positions of the edges $\tau_\pm$, defined in Definition \ref{def edges}, are given in terms of $m_\pm$ by
    \begin{equation}
        \tau_\pm=-m_\pm^{-1}-\gamma(m_\pm).
        \label{tau non-red}
    \end{equation}
    \end{enumerate}
\end{lemma}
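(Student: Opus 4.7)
My plan is to analyze the real-valued function $h$ directly on the intervals $(0, m_-^*)$ and $(m_+^*, 0)$, use the intermediate value theorem to establish existence of a root, identify the root with the Stieltjes transform value at the corresponding edge via the Dyson equation, and then use non-reducibility together with the single-interval support of $\rho$ to pin down uniqueness.

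First, for the boundary behavior, I would show that $h(m) \to +\infty$ as $m \to 0^{\pm}$, which follows from the $m^{-2}$ term dominating the remaining bounded contributions. At $m \nearrow m_-^*$, the smallest positive pole of $h$ arises either from the first sum (some $\mu_i<0$ with $m_-^*=-\mu_i^{-1}$), in which case $\mu_i^2/(1+m\mu_i)^2\to+\infty$, or from the second sum (some $\widehat{\mu}_i<0$ with $\langle w_i,b\rangle\neq 0$ and $m_-^*=-(2\widehat{\mu}_i)^{-1}$), in which case $(1+2m\widehat{\mu}_i)\to 0^+$ and $|\langle w_i,b\rangle|^2/(1+2m\widehat{\mu}_i)^3\to+\infty$. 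All other summands remain bounded on this interval by the definition of $m_-^*$, so $h\to-\infty$; a symmetric analysis handles $(m_+^*, 0)$. The boundary case $m_\pm^*=\pm\infty$ requires a separate leading-order expansion at infinity, but follows the same pattern. Intermediate value theorem then produces at least one zero in each interval.

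For part 2, I would use that by Proposition~\ref{prop SCE}, $m$ is the Stieltjes transform of $\rho$, and hence is real-analytic and monotone increasing on each of $(-\infty,\tau_-)$ and $(\tau_+,\infty)$. Setting $m_\pm := m(\tau_\pm)$, these are the real limits of $m$ at the edges, finite and lying in $(0, m_-^*)$ and $(m_+^*, 0)$ respectively, because $m$ is analytic up to the edge and therefore bounded away from the poles of $\gamma$. Plugging $z = \tau_\pm$, $m = m_\pm$ into \eqref{scalar equation} directly gives $\tau_\pm = -m_\pm^{-1} - \gamma(m_\pm)$, which is part~2. Differentiating \eqref{scalar equation} in $z$ yields $m'(z)\,h(m(z)) = 1$, and the divergence of $m'$ at the edges (a square-root blow-up, established independently in Proposition~\ref{prop m non-red}) gives $h(m_\pm) = 0$, showing that the edge values are roots of $h$ in the respective intervals.

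The main obstacle is uniqueness. The strategy is to study $\Phi(m) := -m^{-1} - \gamma(m)$, so that $\Phi' = h$. The boundary analysis shows $\Phi \to -\infty$ at both endpoints of $(0,m_-^*)$, and the edge identification gives a local maximum at $m_-$ with $\Phi(m_-)=\tau_-$. Any additional zero of $h$ in $(m_-, m_-^*)$ would produce a further critical point, necessarily a local minimum by the alternation rule (since $\Phi\to-\infty$ at both endpoints, critical points alternate max/min with maxes at the extremes). Such a minimum would yield an additional real-valued branch of $\Phi^{-1}$ on some interval $(\tau',\tau_-)$ solving the Dyson equation, producing either a genuine extra component of $\supp(\rho)$ — contradicting the single-interval support result of \cite{Mai2017Absence, Shlyakhtenko_2015} — or forcing an algebraic coincidence among the coefficients $(A,b,c)$ that amounts to the reducible form \eqref{eq shifted red}, contradicting non-reducibility. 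Once uniqueness is obtained, the boundary signs force $h>0$ on $(0,m_-)$ and $h<0$ on $(m_-,m_-^*)$, hence $h'(m_-)\leq 0$, with strict inequality required because a higher-order zero would contradict the first-order (square-root) edge behaviour of Proposition~\ref{prop m non-red}; symmetrically $h'(m_+)>0$. Making the translation from ``extra critical point of $\Phi$'' into either ``reducible form of $q$'' or ``violation of single-interval support'' rigorous is the technical heart of the argument.
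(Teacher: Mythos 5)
Your outline gets the easy parts right (sign of $h$ near $0$, blow-up of $h$ at a finite pole, IVT), but the two claims that carry all the weight are not actually proved. The first is uniqueness together with simplicity of the root. In the paper this rests on a genuinely quantitative fact: on $(m_+^*,0)$ the inequality $h(m)\geq 0$ forces $h'(m)>0$ (Lemma~\ref{lemma quad stab}), which immediately gives that any root is the largest, hence unique, and is simple with the stated sign. You replace this by the assertion that a second critical point of $\Phi(m)=-m^{-1}-\gamma(m)$ would produce either an extra component of $\supp(\rho)$ or the reducible form \eqref{eq shifted red}, and you yourself flag this translation as the ``technical heart'' left open. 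As stated it is doubtful: extra real zeros of $h$ only produce further local branches of $\Phi^{-1}$, i.e.\ further real solutions of \eqref{scalar equation}, and such solutions have no a priori connection to the Stieltjes transform $m$ or to $\supp(\rho)$, so no soft contradiction with single-interval support is available. Moreover your alternation picture already fails at the level of boundary values: when $m_-^*=\infty$ (e.g.\ $A\geq 0$) one has $\gamma(m)\to\mathrm{const}$ and $\Phi$ tends to a finite limit, not to $-\infty$, so ``maxima at the extremes'' is not forced. The same issue infects the existence step when $m_\pm^*$ is infinite: there $h$ does not diverge, and for reducible polynomials it has no root at all, so ``follows the same pattern'' is false; one must actually invoke non-reducibility (either $\rank A\geq2$, or $\rank A=1$ with $b\notin\Image\widehat A$) in an expansion at infinity, which is exactly the content of the paper's Lemma~\ref{lemma root conditions}.

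The second problem is circularity. You identify $m_\pm$ with $\lim_{z\to\tau_\pm}m(z)$, deduce $h(m_\pm)=0$ from the divergence of $m'$ at the edge, and get strictness of $\pm h'(m_\pm)>0$ by excluding higher-order zeros — in each case by citing Proposition~\ref{prop m non-red} as ``established independently.'' It is not: in the paper that proposition is derived from Lemma~\ref{lemma h non-red}, so it cannot be used to prove it. In addition, your Part~2 assumes that $m_\pm$ is finite and stays away from the poles of $\gamma$ ``because $m$ is analytic up to the edge''; $m$ is only analytic on $\C\setminus\supp(\rho)$, and finiteness of the boundary limit is genuinely nontrivial — in the reducible hard-edge case the analogous limit is $+\infty$ — so this too requires an argument using non-reducibility. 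The paper sidesteps all of this by going in the opposite direction: it defines $z(\widetilde m)=-\widetilde m^{-1}-\gamma(\widetilde m)$ on $(m_+,0)$, where $h>0$ guarantees invertibility, matches the inverse with $m$ near infinity via the $-1/z$ asymptotics, and pins down $\tau_+=z(m_+)$ through an analytic-continuation dichotomy. To salvage your route you would need, at a minimum, a non-circular replacement for Proposition~\ref{prop m non-red} (e.g.\ an inverse-function-theorem argument showing that $h(m(\tau_+))\neq0$ would let $m$ continue real-analytically through $\tau_+$, contradicting $\tau_+\in\supp(\rho)$), a proof of finiteness of $m_\pm$, and a genuine proof of uniqueness — for which I see no substitute for a quantitative statement of the type of Lemma~\ref{lemma quad stab}.
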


\begin{lemma}
    \label{lemma h red}
    Let $h$ be the function introduced in Definition~\ref{def h} for a reducible polynomial $q=q_r$ of the form \eqref{shifted normalized} with normalized $v\in\C^l$ and $\xi\geq0$. Then the following holds true.
    \begin{enumerate}
        \item\label{lemma h red part 1} The function $h$ has a unique root $m_+$ in $(m_+^*,0)$, which is of first order and satisfies
        \begin{equation}
            h'(m_+)>0.
        \end{equation}
        \item\label{lemma h red part 2} The function $h$ is continuous on $(0,\infty)$. It has no root in $(0, \infty)$ if and only if one of the following hold true:
        \begin{enumerate}
            \item $\xi=0$;
            \item or $v\in\R^l$ and $\xi\leq2$;
            \item or $v\in\C^l\setminus e^{\I\phi}\R^l$ for all $\phi\in(-\pi,\pi]$ and $s\xi\leq2$.
        \end{enumerate}
        If none of the above conditions are satisfied, $h$ does have a unique root $m_-\in(0, \infty)$, which is of first order and satisfies
        \begin{equation}
            h'(m_-)<0.
        \end{equation}
        If $h$ does not have any roots in $(0,\infty)$, the following asymptotic behaviour is observed for $m\to \infty$:
        \begin{equation}
            h(m)\sim
            \begin{cases}
              m^{-3}\quad\text{if }\xi=0\\
              m^{-3}\quad\text{if }v\in\R^l \text{ and } \xi<2\\
              m^{-5}\quad\text{if }v\in\R^l \text{ and } \xi=2\\
              m^{-3}\quad\text{if }v\in\C^l\setminus e^{\I\phi}\R^l \text{ for all } \phi\in(-\pi,\pi] \text{ and } s\xi<2\\
              m^{-4}\quad\text{if }v\in\C^l\setminus e^{\I\phi}\R^l \text{ for all } \phi\in(-\pi,\pi] \text{ and } s\xi=2.
            \end{cases}
        \end{equation}
        Here $s$ was defined in \eqref{eq s}.
        \item\label{lemma h red part 3} The positions of the edges $\tau_\pm$, defined in Definition \ref{def edges}, are given by
    \begin{equation}
        \tau_+=-m_+^{-1}-\gamma(m_+)
        \label{tau_+ red}
    \end{equation}
    and
    \begin{equation}
        \tau_-=
        \begin{cases}
            -m_-^{-1}-\gamma(m_-) &\text{if }m_->0\text{ with }h(m_-)=0\text{ exists;}\\
            0 &\text{otherwise.}
        \end{cases}
        \label{tau_- red}
    \end{equation}
    \end{enumerate}
\end{lemma}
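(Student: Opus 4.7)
The plan is to exploit the reducible structure of $q_r$ to write $h$ explicitly and then carry out a case analysis. With $v = p + \I q$ for $p = \Re v$ and $q = \Im v$, the coefficients reduce to $A = vv^*$ (rank one, nonzero eigenvalue $1$), $b = -2\xi p$, $c = \xi^2$, while $\widehat A = pp^t + qq^t$ is positive semidefinite of trace $1$ and rank at most two. Substituting into \eqref{function h} yields
\begin{equation*}
h(m) = \frac{1}{m^2} - \frac{1}{(1+m)^2} - 4\xi^2 \sum_i \frac{|\langle w_i, p\rangle|^2}{(1+2m\widehat\mu_i)^3},
\end{equation*}
where the sum runs over the nonzero eigenvalues $\widehat\mu_i$ of $\widehat A$ with orthonormal eigenvectors $w_i$. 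Since every $\widehat\mu_i \geq 0$, all poles of $h$ other than $0$ lie in $(-\infty, 0)$, immediately giving the asserted continuity of $h$ on $(0,\infty)$ and pinning down $m_+^*$.

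For Part~\ref{lemma h red part 1} I would differentiate termwise and observe that
\begin{equation*}
h'(m) = -\frac{2}{m^3} + \frac{2}{(1+m)^3} + 24\xi^2 \sum_i \frac{\widehat\mu_i\,|\langle w_i, p\rangle|^2}{(1+2m\widehat\mu_i)^4}
\end{equation*}
is a sum of strictly positive terms on $(m_+^*, 0)$ (for $m < 0$ with $1+m > 0$ and $1+2m\widehat\mu_i > 0$ by definition of $m_+^*$), so $h$ is strictly increasing there. Combining $h(m) \to +\infty$ as $m \to 0^-$ with $h(m) \to -\infty$ as $m \to (m_+^*)^+$ (the dominant pole in $h$ carries a minus sign) yields a unique simple root $m_+$ with $h'(m_+) > 0$ by the intermediate value theorem.

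For Part~\ref{lemma h red part 2}, the case $\xi = 0$ kills the third term and leaves $h(m) = (2m+1)/(m^2(1+m)^2) > 0$ on $(0,\infty)$ with asymptotic $h \sim 2/m^3$. For $v \in \R^l$ the rank of $\widehat A$ collapses to one (single eigenvalue $1$), and clearing denominators produces the clean factorization
\begin{equation*}
m^2(1+m)^2(1+2m)^3 h(m) = (1+2m)^4 - 4\xi^2 m^2(1+m)^2 = \bigl[(1+2m)^2 - 2\xi m(1+m)\bigr]\bigl[(1+2m)^2 + 2\xi m(1+m)\bigr].
\end{equation*}
The second factor is strictly positive on $(0,\infty)$ and the first equals $1 + (4-2\xi)m(m+1)$: for $\xi \leq 2$ this stays $\geq 1$, giving $h > 0$ on $(0,\infty)$ with asymptotic $h \sim (4-\xi^2)/(2m^3)$ (the first factor collapsing to $\equiv 1$ at $\xi = 2$, which yields $h(m) = (8m^2+8m+1)/(m^2(1+m)^2(1+2m)^3) \sim m^{-5}$); for $\xi > 2$ the first factor is strictly decreasing from $1$, producing a unique positive root $m_-$ with $h'(m_-) < 0$. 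In the genuinely complex case $\widehat A$ has rank two, and I would parametrize its eigenvectors on $\Span(p, q)$ as $w_\pm \propto p + a_\pm q$ with $a_\pm$ as in \eqref{eq a_pm}, using the quadratic identity satisfied by $a_\pm$ to verify $\widehat\mu_\pm = \sigma^2 + a_\pm\mu$ and $|\langle w_\pm, p\rangle|^2 = (\sigma^2 + a_\pm\mu)^2/(\sigma^2 + 2a_\pm\mu + a_\pm^2(1-\sigma^2))$. Expanding $h$ in powers of $1/m$ then identifies the coefficient of $m^{-3}$ as exactly $(4 - s^2\xi^2)/2$ with $s$ as in \eqref{eq s} (the $\mu = 0$ value $s^2 = \sigma^{-4}$ recovered as a limit). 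The dichotomy $s\xi \lessgtr 2$ controls the sign of this leading coefficient: for $s\xi \leq 2$, the denominator-cleared analog of the real-case factorization (organized using $a_+a_- = -1$) stays positive on $(0,\infty)$, with the critical $s\xi = 2$ forcing the next nonzero term in the expansion to be a strictly positive multiple of $m^{-4}$; for $s\xi > 2$ the leading coefficient of $h$ is negative, so $h$ has at least one positive root by IVT, and uniqueness with $h'(m_-) < 0$ follows from the same polynomial identity.

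Part~\ref{lemma h red part 3} follows from combining \eqref{scalar equation} with the fact that on $\R \setminus \supp(\rho)$ the analytic extension of $m$ is real-valued, strictly increasing, and satisfies $m'(z) = 1/h(m)$. A regular edge forces $m'$ to blow up, hence $h(m) = 0$ there, pinning the boundary value of $m$ to the unique root from Parts~\ref{lemma h red part 1} and \ref{lemma h red part 2}; substitution into \eqref{scalar equation} yields \eqref{tau_+ red} and the first line of \eqref{tau_- red}. When $h$ has no positive root, $m$ sweeps $(0,\infty)$ as $z$ traverses $(-\infty, \tau_-)$, so $\tau_- = \lim_{m\to\infty}(-1/m - \gamma(m))$; a short computation based on $\widehat A\,p = \sigma^2 p + \mu q$ and the identity $p^t \widehat A^+ p = 1$ (verified directly on the range of $\widehat A$) shows $\lim \gamma(m) = \xi^2 \cdot 1 - \xi^2 = 0$, giving $\tau_- = 0$ consistent with $q_r(\mb X) \geq 0$. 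The main obstacle will be the uniqueness and sign analysis of $h$ on $(0,\infty)$ in the genuinely complex subcase: unlike the clean two-factor identity available for $v \in \R^l$, the cleared denominator produces a high-degree polynomial whose at-most-one-positive-root structure must be extracted by careful use of the relations $\widehat\mu_+ + \widehat\mu_- = 1$, $a_+ a_- = -1$, and the defining formula for $s$.
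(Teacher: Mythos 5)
Your explicit reduction of $h$ for $q_r$, the monotonicity argument on $(m_+^*,0)$ (where every term of $h'$ is indeed nonnegative because $m<0$, $1+m>0$ and $1+2m\widehat\mu_i>0$ there), the difference-of-squares factorization in the real case, and the identification of the $m^{-3}$ coefficient as $(4-s^2\xi^2)/2$ are all correct, and for Part~\ref{lemma h red part 1} and the subcases $\xi=0$, $v\in\R^l$ this is a complete and in fact more elementary argument than the paper's (which works through the reflection $h_{q_r}(m)=h_{-q_r}(-m)$ and the general Lemmas~\ref{lemma quad stab} and \ref{lemma root conditions}). However, there is a genuine gap exactly where you flag "the main obstacle": for genuinely complex $v$ your $1/m$-expansion only controls the sign of $h$ near $m=\infty$. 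It does not show that $h>0$ on \emph{all} of $(0,\infty)$ when $s\xi\le 2$ (a priori $h$ could dip negative at intermediate $m$ and return, producing two roots), nor that the root is unique and simple with $h'(m_-)<0$ when $s\xi>2$; the "denominator-cleared analog of the real-case factorization" that is supposed to deliver this is never exhibited, and the cleared numerator is a degree-seven polynomial with no evident sign structure. Note also that your sum-of-positive-terms formula for $h'$ is specific to $m<0$ and gives nothing on $(0,\infty)$, where $-2/m^3<0$. The paper closes precisely this gap with Lemma~\ref{lemma quad stab}: on the relevant interval $h\ge 0$ implies $h'>0$ (applied to $h_{-q_r}$ on the negative half-line, using Lemma~\ref{lemma hat A} to check its hypotheses), which simultaneously forbids any excursion of $h$ below zero when the leading asymptotic coefficient is nonnegative and yields uniqueness and first order of the root otherwise. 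Without this (or a genuinely produced factorization), the complex subcase of Part~\ref{lemma h red part 2} is unproven; likewise the claim that at $s\xi=2$ the $m^{-4}$ coefficient is strictly positive is asserted but not verified (it does hold, e.g.\ using $\widehat\mu_++\widehat\mu_-=1$ together with the constraint $s\xi=2$, but some computation is needed).

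A secondary issue is Part~\ref{lemma h red part 3}: your argument begins "a regular edge forces $m'$ to blow up, hence $h(m)=0$ there", but regularity of the edges is not available at this stage — it is deduced in Proposition~\ref{prop m red} \emph{from} this lemma, so as written the argument is circular. Even without assuming regularity, knowing that $m(z)$ has a finite boundary value at $\tau_+$ with $h\neq 0$ there only gives, via the implicit function theorem, a real-analytic solution of \eqref{scalar equation} extending past $\tau_+$; to contradict $\tau_+=\max\supp(\rho)$ one must argue that this solution coincides with the Stieltjes transform, which is what the paper's continuation/uniqueness argument (inverting $z(\widetilde m)=-\widetilde m^{-1}-\gamma(\widetilde m)$ on $(m_+,0)$, matching with $m$ near infinity through the $-1/z$ asymptotics, and deriving a contradiction from an analytic extension across the edge) supplies. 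Your computation of $\tau_-=0$ in the no-root case via $p^t\widehat A^{+}p=1$ is correct and matches the paper's limit \eqref{limit tau_-}, but the identification $\tau_-=z_\infty$ again needs the same continuation argument rather than the regular-edge shortcut.
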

The proof of the lemmata is deferred until after the proof of Propositions~\ref{prop m non-red} and \ref{prop m red}.

\begin{proof}[Proof of Propositions~\ref{prop m non-red} and \ref{prop m red}]
In both the non-reducible and the shifted reducible cases the function $h$ has a unique root $m_+$ in $(m_+^*,0)$. 
For all $m\in(m_+,0)$ 
the derivative of the inverse $z=z(m)$ of $m=m(z)$ with respect to $m$ is given by
\begin{equation}
    \frac{\dd}{\dd m}z=h(m).
\end{equation}
Then, by Lemma~\ref{lemma h non-red} and Lemma~\ref{lemma h red}, there is an $r\sim1$ and a $c_+>0$, only depending on the coefficients of $q$, such that for all $m\in (m_+,m_++r)$ we have
\begin{equation}
    \frac{\dd}{\dd m}z=h(m)=2c_+^2(m-m_+)(1+\mc O(m-m_+))
\end{equation}
and thus
\begin{equation}
    z-\tau_+=c_+^2(m-m_+)^{2}(1+\mc O(m-m_+)).
\end{equation}
Taking the square root of the above equation, we obtain
\begin{equation}
    m-m_+=c_+\sqrt{z-\tau_+}+\mc O(|z-\tau_+|)
    \label{eq m to the right}
\end{equation}
for all $z\in(\tau_+,\tau_++u)$ for some $u>0.$ As $m$ is holomorphic on $\HH\cup(\tau_+,\infty)$ and has positive imaginary part on $\HH$, the relation \eqref{eq m to the right} also holds for all $z\in\HH$ with $|z-\tau_+|\leq u$ for some $u>0$.

In all cases, where $h$ has a root $m_-$ in $(0,m_-^*)$, we find by the same argument that
\begin{equation}
    m-m_-=-c_-\sqrt{\tau_--z}+\mc O(|\tau_--z|)
\end{equation}
for all $z\in\HH$ with $|z-\tau_+|\leq u$ for some $u,c_->0.$

Finally, in the cases where $h$ does not have a root in $(0,\infty)$, it is continuous on the whole interval and we have that for all $m\in(0,\infty)$
the derivative of $z$ with respect to $m$ is given by
\begin{equation}
    \frac{\dd}{\dd m}z=h(m).
\end{equation}
Then, by Lemma~\ref{lemma h red}, there is an $R\sim1$ and a $c_0>0$, only depending on the coefficients of $q$, such that for all $m\in (R,\infty)$ we have
\begin{equation}
    \frac{\dd}{\dd m}z=h(m)=c_0m^{-p}(1+\mc O(m^{-1}))
    \quad
    \text{and thus}
    \quad
    z=-\frac{c_0}{p-1}m^{-(p-1)}(1+\mc O(m^{-1})),
\end{equation}
with $p\in\{3,4,5\}.$ Inverting the relation, we obtain
\begin{equation}
    m=c_-(-z)^{-\frac{1}{p-1}}+\mc O(1)
    \label{eq m to the left}
\end{equation}
for all $z\in(-u,0)$ and some $u,c_->0.$ Again, as $m$ is holomorphic on $\HH\cup(-\infty,0)$ and has positive imaginary part on $\HH$, the relation \eqref{eq m to the left} also holds for all $z\in\HH$ with $|z|\leq u$ for some $u>0$.

\end{proof}

\begin{proof}[Proof of Lemma~\ref{lemma h non-red} and Lemma~\ref{lemma h red}]
We prove both lemmata simultaneously and remark, where it becomes necessary to distinguish the different polynomials $q.$ We first investigate the existence and order of roots of $h$.
Let $h_q$ denote the rational function \eqref{function h} with the dependence on $q$ being made explicit. We have $h_q(-m)=h_{-q}(m)$. Therefore any root of $h_q$ on $(0,\infty)$ corresponds to a root of $h_{-q}$ on $(-\infty,0).$
For simplicity, the proofs are thus only formulated for $m\in(-\infty,0)$. To obtain the corresponding result for $m\in(0,\infty)$ for the function $h_q$, we consider the function $h_{-q}$ on $(-\infty,0)$ instead.

We now prove for all polynomials $q$ as in \eqref{eq polynomial} that $h'(m)>0$ for all $m\in(m_+^*,0)$ with $h(m)\geq0$ and thus $h$ can only have first-order roots on the interval.

Note that on $(m_+^*,0)$ the inequality $h\geq0$ is equivalent to 
\begin{equation}
\sum_{i=1}^l \frac{(m\mu_i)^2}{(1+m\mu_i)^2}+\sum_{\substack{i=1,\\
\langle  w_i, b\rangle\neq0}}^l(m|\langle  w_i, b\rangle|)^2\frac{1}{(1+2m\widehat\mu_i)^3}\leq1
\label{ineq h positive}
\end{equation}
and that on $(m_+^*,0)$ the inequality $h'>0$ is equivalent to
\begin{equation}
\sum_{i=1}^l \frac{(m\mu_i)^3}{(1+m\mu_i)^3}+\sum_{\substack{i=1,\\
\langle  w_i, b\rangle\neq0}}^l(m|\langle  w_i, b\rangle|)^2\frac{3(m\widehat\mu_i)}{(1+2m\widehat\mu_i)^4}<1.
\label{ineq h' positive}
\end{equation}
For $m\in(m_+^*,0)$ we have also that $m\mu_i>-1$ for all $i\in\llbracket l\rrbracket$ and $m\widehat\mu_i>-\frac{1}{2}$ for all $i\in\llbracket l\rrbracket$ such that $\langle  w_i, b\rangle\neq0$ by the definition of $m_+^*$ in \eqref{eq largest pole}. Furthermore, Lemma~\ref{lemma hat A} ensures that $\max_{i\in\llbracket l\rrbracket}\{m\mu_i\}\geq\max_{i\in\llbracket l\rrbracket}\{m\widehat\mu_i\}$.
Thus the desired implication, $h\geq0$ implies $h'>0$,  follows directly from the following lemma by identifying \eqref{condition lemma} and \eqref{implication lemma} right below with \eqref{ineq h positive} and \eqref{ineq h' positive} respectively.
\begin{lemma}
 Let $n,k\in\N$ with $n\geq k$. Let $y_i$, $i\in\llbracket n\rrbracket$  and let $\widehat y_j$, $j\in\llbracket k\rrbracket$ be collections of real numbers, both of them sorted in non-increasing order, $y_1\geq y_2\geq\ldots\geq y_n$ and $\widehat{y}_1\geq\widehat{y}_2\geq\ldots\geq\widehat{y}_k$. Suppose they satisfy
\begin{enumerate}
    \item $y_1\geq\widehat y_1$;
    \item $y_n>-1$ and $\widehat{y}_k>-\frac12$.
\end{enumerate}
Furthermore let $c_j>0$, $j\in\llbracket k\rrbracket$. Then the inequality
\begin{equation}
\sum_{i=1}^n\frac{y_i^2}{(y_i+1)^2}+\sum_{j=1}^k c_j^2\frac{1}{(2\widehat y_j+1)^{3}}\leq1
\label{condition lemma}
\end{equation}
implies
\begin{equation}
\sum_{i=1}^n\frac{y_i^3}{(y_i+1)^3}+3\sum_{j=1}^k c_j^2\frac{\widehat y_j}{(2\widehat y_j+1)^{4}}\leq1-\nu
\label{implication lemma}
\end{equation}
for some $\nu>0$, depending only on $y_1.$
\label{lemma quad stab}
\end{lemma}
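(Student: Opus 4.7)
The plan is to substitute $\phi_i:=y_i/(y_i+1)$ and $\psi_j:=2\widehat y_j/(2\widehat y_j+1)$, both lying in $(-\infty,1)$ by condition~2, and to set $E_j:=c_j^2/(2\widehat y_j+1)^3\geq 0$. The hypothesis \eqref{condition lemma} rewrites as $\sum_i\phi_i^2+\sum_j E_j\leq 1$, while the identity $3\widehat y_j/(2\widehat y_j+1)^4=\tfrac{3}{2}\psi_j/(2\widehat y_j+1)^3$ recasts the conclusion \eqref{implication lemma} as
\[
\sum_i \phi_i^3 + \tfrac{3}{2}\sum_j \psi_j E_j \leq 1-\nu.
\]
Because $y\mapsto y/(y+1)$ and $y\mapsto 2y/(2y+1)$ are increasing on their respective domains, the ordering gives $\phi_i\leq\phi_1$, while condition~1 ($\widehat y_1\leq y_1$) forces $\psi_j\leq 2y_1/(2y_1+1)=:\psi_\ast$ for every $j$.

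I would dispose of the case $y_1\leq 0$ immediately: then $\phi_1\leq 0$ and $\psi_\ast\leq 0$, so every $\phi_i^3\leq 0$ (all $\phi_i\leq\phi_1\leq 0$) and every $\psi_j E_j\leq 0$, leaving the conclusion LHS non-positive and any sufficiently small $\nu>0$ admissible. The main case is $y_1>0$, where I would prove the two term-by-term estimates
\[
\sum_i\phi_i^3 \leq \phi_1\sum_i\phi_i^2 \leq \phi_1\Big(1-\sum_j E_j\Big), \qquad \tfrac{3}{2}\sum_j\psi_j E_j \leq \tfrac{3}{2}\psi_\ast\sum_j E_j.
\]
The crucial pointwise inequality $\phi_i^3\leq\phi_1\phi_i^2$ holds either from $\phi_i\leq\phi_1$ when $\phi_i\geq 0$, or because the LHS is negative and the RHS non-negative when $\phi_i<0$ (using $\phi_1\geq 0$); the second estimate is immediate from $E_j\geq 0$ and $\psi_j\leq\psi_\ast$.

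Adding the two estimates bounds the conclusion LHS by the affine expression $\phi_1+(\tfrac{3}{2}\psi_\ast-\phi_1)\sum_j E_j$, and a short algebraic computation yields $\tfrac{3}{2}\psi_\ast-\phi_1 = y_1(y_1+2)/((2y_1+1)(y_1+1))>0$. The closing step exploits that $y_1$ is itself among the $y_i$, so $\sum_i\phi_i^2\geq\phi_1^2$, and the hypothesis then gives $\sum_j E_j\leq 1-\phi_1^2=(2y_1+1)/(y_1+1)^2$. Plugging this extremal value into the affine bound and simplifying collapses the expression to exactly $1-1/(y_1+1)^3$, so $\nu:=\min\{1/(y_1+1)^3,\tfrac{1}{2}\}$ depends only on $y_1$ and suffices. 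The main obstacle is controlling the coefficient of $\sum_j E_j$ after combining: without condition~1 one would only know $\psi_j<1$ abstractly, and $\tfrac{3}{2}\psi_\ast$ could exceed $1$, ruining the affine bound; it is precisely the compatibility $\widehat y_1\leq y_1$ that ties $\psi_\ast$ back to $y_1$ and keeps the combination below $1$.
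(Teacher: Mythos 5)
Your proof is correct, and it takes a genuinely different and in fact leaner route than the paper's. The paper proves the same implication by a case analysis in $\widehat y_1$: for $\widehat y_1\le\tfrac12$ it compares term by term using $3\widehat y_1 h_2(\widehat y_1)\le\tfrac34$; for $\tfrac12<\widehat y_1<1$ it absorbs everything except the $y_1$-term and gains $\tfrac{2}{27}$; and for $\widehat y_1\ge1$ it first rescales the coefficients $c_j\mapsto rc_j$, using that the gap decreases monotonically in $r$, to reduce to the extremal case where the hypothesis holds with equality, and only then bounds the remaining sum, again arriving at $1-(1+y_1)^{-3}$. You instead exploit the substitution $\phi_i=y_i/(y_i+1)$, $\psi_j=2\widehat y_j/(2\widehat y_j+1)$, $E_j=c_j^2(2\widehat y_j+1)^{-3}>0$ to make the target an affine function of $S=\sum_j E_j$: the pointwise bounds $\phi_i^3\le\phi_1\phi_i^2$ (valid since $\phi_1\ge0$ in the main case $y_1>0$) and $\psi_j\le\psi_*=2y_1/(2y_1+1)$ (this is exactly where $\widehat y_1\le y_1$ enters, as you note) give the bound $\phi_1+(\tfrac32\psi_*-\phi_1)S$ with positive slope, and the constraint $S\le1-\phi_1^2=(2y_1+1)/(y_1+1)^2$ coming from the presence of the $i=1$ term then yields $1-(1+y_1)^{-3}$ directly; the case $y_1\le0$ is trivial since all contributions to the conclusion are nonpositive. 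This removes both the case split in $\widehat y_1$ and the rescaling argument, while recovering the same explicit constant, so your choice $\nu=\min\{(1+y_1)^{-3},\tfrac12\}$ depends only on $y_1$ as required.
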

The proof of the lemma is deferred to the end of the section.

To characterize the existence of roots of $h$ we introduce the following notions. If $A$ is a rank one matrix with real entries, then $\widehat{A}=A$ and $\widehat A$ is a rank one matrix as well. We will denote its  non-zero eigenvalue by $\mu$ and the corresponding normalized eigenvector by $ w$. If, on the other hand, $A$ is a rank one matrix with $A\in\C^{l\times l}\setminus\R^{l\times l}$, then $\widehat{A}$ is a rank two matrix by Lemma~\ref{lemma hat A} in the appendix. We denote its non-zero eigenvalues by $\mu_\pm$ and the corresponding normalized eigenvectors by $ w_\pm.$ 

\begin{lemma}
Let $m_+^*$ be defined as in \eqref{eq largest pole}. The function $h$ has no root in $(m_+^*,0)$ if and only if $A$ is a negative semi-definite rank one matrix, $ b\in\Image(\widehat{A})$ and either
\begin{enumerate}
\item $A\in\R^{l\times l}$ and $\Vert  b\Vert\leq4\Vert A\Vert$;
\item or $A\in\C^{l\times l}\setminus\R^{l\times l}$ and 
\begin{equation}
\frac{|\langle b, w_+\rangle|^2}{r_+^3}+\frac{|\langle b, w_-\rangle|^2}{r_-^3}\leq(4\Vert A\Vert)^2,
\end{equation}
with $r_\pm:=-\Vert A\Vert^{-1}\mu_\pm.$
\end{enumerate}
If $h$ has no root in $(m_+^*,0)$, then we have $m_+^*=-\infty$ and the asymptotic behaviour of $h$ for $m\to-\infty$ is given by
\begin{equation}
h(m)\sim
\begin{cases}
  \frac{1}{(-m)^3}  & A\in\R^{l\times l} \text{and } \Vert  b\Vert<4\Vert A\Vert\\ 
  \frac{1}{(-m)^5}  & A\in\R^{l\times l} \text{and } \Vert  b\Vert=4\Vert A\Vert\\ 
  \frac{1}{(-m)^3} & A\in\C^{l\times l}\setminus\R^{l\times l}  \text{ and } \frac{|\langle b, w_+\rangle|^2}{|r_+|^3}+\frac{|\langle b, w_-\rangle|^2}{|r_-|^3}<(4\Vert A\Vert)^2\\
  \frac{1}{(-m)^4} & A\in\C^{l\times l}\setminus\R^{l\times l}  \text{ and } \frac{|\langle b, w_+\rangle|^2}{|r_+|^3}+\frac{|\langle b, w_-\rangle|^2}{|r_-|^3}=(4\Vert A\Vert)^2.
\end{cases}
\label{h asymptotics}
\end{equation}
\label{lemma root conditions}
\end{lemma}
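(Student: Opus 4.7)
The plan is to reduce the characterization to an asymptotic analysis of $h(m)$ as $m \to -\infty$, using that by arguments already developed in the proof of Lemma~\ref{lemma h non-red}, the function $h$ admits at most one root in $(m_+^*, 0)$.

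First I set up the reduction. By Lemma~\ref{lemma quad stab}, $h'(m) > 0$ whenever $h(m) \geq 0$ on $(m_+^*, 0)$, so every root of $h$ in this interval is simple and a transversal crossing from negative to positive values; this immediately forces uniqueness. Combined with $h(m) \to +\infty$ as $m \to 0^-$, the absence of a root is then equivalent to $h(m) > 0$ near the left endpoint of the interval. If $m_+^* \in \R$, the definition of $m_+^*$ as a pole of $h$ implies that as $m \to (m_+^*)^+$ one of the subtracted terms in \eqref{function h} tends to $+\infty$, so that $h(m) \to -\infty$ and a root exists. Consequently, the absence of a root requires $m_+^* = -\infty$, equivalently $A \leq 0$ and $\langle w_i, b\rangle = 0$ whenever $\widehat\mu_i > 0$.

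Next I expand $h(m)$ in inverse powers of $m$ under the assumption $m_+^* = -\infty$. The leading contribution of $1/m^2 - \sum_i \mu_i^2/(1+m\mu_i)^2$ is $(1-\rank A)/m^2$, while any index with $\widehat\mu_i = 0$ and $\langle w_i, b\rangle \neq 0$ contributes the strictly negative constant $-|\langle w_i, b\rangle|^2$. Hence asymptotic positivity of $h$ forces $\rank A = 1$ (using $A \neq 0$) and $b \in \Image(\widehat A)$. Writing $A = \mu v v^*$ with $\mu < 0$ and $\Vert v\Vert = 1$, the identity $1/m^2 - \mu^2/(1+m\mu)^2 = (1+2m\mu)/(m^2(1+m\mu)^2)$ shows that the $1/m^2$ contribution cancels and that the next order gives
\begin{equation*}
    h(m) = \frac{1}{m^3}\left(\frac{2}{\mu} - \sum_i \frac{|\langle w_i, b\rangle|^2}{8\,\widehat\mu_i^3}\right) + \mc O(m^{-4}).
\end{equation*}
Since $m^{-3} < 0$ for $m < 0$ and $\mu, \widehat\mu_i < 0$, the sign of the bracketed coefficient, after the substitution $r_i = -\widehat\mu_i/\Vert A\Vert$, translates into exactly the inequalities stated in the lemma, split into the real case (where $\widehat A = A$ has a single nonzero eigenvalue, and one recovers $\Vert b\Vert \leq 4\Vert A\Vert$) and the complex case (where $\widehat A$ has two nonzero eigenvalues $\mu_\pm$ with eigenvectors $w_\pm$).

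Finally, the sharp asymptotic rate in \eqref{h asymptotics} is obtained by locating the first non-vanishing term in the Taylor expansion of $h$ at $m = -\infty$. In the strict inequality cases the displayed $1/m^3$ coefficient is nonzero and $h \sim 1/(-m)^3$ immediately. The main obstacle lies in the two boundary (equality) cases, where the $1/m^3$ coefficient vanishes by design and one must extend the expansion. The crucial algebraic input is that the nonzero eigenvalues of $-\widehat A/\Vert A\Vert$ sum to one (so $r = 1$ in the real case and $r_+ + r_- = 1$ with $r_\pm \in (0,1)$ in the complex case), because $\Re(vv^*)$ has unit trace. In the real equality case a further arithmetic coincidence also annihilates the $1/m^4$ coefficient, yielding $h \sim 1/(-m)^5$ with explicit positive prefactor $1/|\mu|^3$; in the complex equality case the strict inequality $r_\pm < 1$ gives $\sum_\pm |\langle w_\pm, b\rangle|^2/r_\pm^4 > \sum_\pm |\langle w_\pm, b\rangle|^2/r_\pm^3 = (4\Vert A\Vert)^2$, so the $1/m^4$ coefficient is strictly positive and $h \sim 1/(-m)^4$. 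Both claims are verified by carrying the expansions of the individual rational terms in \eqref{function h} out to order $m^{-5}$, which is routine but requires careful bookkeeping.
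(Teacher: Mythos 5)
Your proposal is correct and follows essentially the same route as the paper: poles of $h$ force roots, the implication $h\geq 0\Rightarrow h'>0$ from Lemma~\ref{lemma quad stab} reduces absence of roots to eventual positivity of $h$ as $m\to-\infty$, and the characterization and the asymptotic rates then come from the same expansion of \eqref{function h} in inverse powers of $m$ with the same case analysis (rank, semi-definiteness, $b\in\Image(\widehat A)$, boundary cases). Your use of the trace identity $r_++r_-=1$ to obtain strict positivity of the $m^{-4}$ coefficient in the complex boundary case makes explicit a point the paper leaves implicit, but it does not change the method.
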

The proof of the lemma is also deferred to the end of the section.

Note that $h$ can have at most one root on $(m_+^*,0)$. To see this, recall that $h(m)\geq0$ implies $h'(m)>0$. Therefore, if $m_+$ is a root of $h$ on $(m_+^*,0)$, we know that $h$ is strictly monotonously increasing on $(m_+,0)$. Thus, any root of $h$ on $(m_+^*,0)$ is also the largest one on the interval; therefore it must be the unique root.

Now let $q$ be a non-reducible polynomial. If $\rank A\geq2$, then $h$ has a root in $(-\infty,0)$ by Lemma~\ref{lemma root conditions}. If $\rank A=1$, we can express $A$ as $A=\alpha w w^*$ for some $\alpha\in\R\setminus\{0\}$ and normalized $ w\in\C^l$. The vector $ b$ cannot be of the form $ b=\alpha_1\Re  w+\alpha_2\Im  w$ for any $\alpha_1,\alpha_2\in\R$ as $q$ would be reducible otherwise. Thus $ b\notin\Image A$ and again $h$ has a root in $(-\infty,0)$ by Lemma~\ref{lemma root conditions}, completing the proof of Statement~\ref{lemma h non-red part 1} of Lemma~\ref{lemma h non-red}.

Next, consider a shifted reducible polynomial $q=q_r$ for $q_r$ as in \eqref{shifted normalized}. If we write $q$ in terms of \eqref{eq polynomial} we have $A= v v^*\geq0$. Applying Lemma~\ref{lemma h non-red} we obtain that $h$ has a root in $(-\infty,0)$ and thereby prove Statement~\ref{lemma h red part 1} of Lemma~\ref{lemma h red}.

Finally, consider  $q=-q_r$ for $q_r$ as in \eqref{shifted normalized}. Then the coefficients $A$ and $ b$ associated with $q$ are given by $A=- v v^*$ and $ b=2\xi\Re  v.$ The matrix $A$ is a negative semi-definite rank one matrix. For $\mu=\langle \Re  v,\Im  v\rangle\neq0$ the non-zero eigenvalues of $\widehat{A}=-\frac12( v^* v+ v v^*)=-(\Re  v)^*\Re  v-(\Im  v)^*\Im  v$ are given by 
\begin{equation}
    \mu_\pm=\sigma^2+\mu a_\pm,
\end{equation}
where $\sigma=\Vert \Re  v\Vert$ and $a_\pm$ was defined in \eqref{eq a_pm}.  The corresponding normalized eigenvectors are
\begin{equation}
     w_\pm=\frac{1}{\sigma^2+a_\pm(1-\sigma^2)+2a_\pm\mu}\left(\Re  v+a_\pm\Im  v\right).
\end{equation}
For $\mu=0$ the eigenvector-eigenvalue pairs are given by
\begin{equation}
    ( w_+,\mu_+)=\left(\frac{\Im  v}{1-\sigma^2},1-\sigma^2\right),\quad
    ( w_-,\mu_-)=\left(\frac{\Re  v}{\sigma^2},\sigma^2\right).
\end{equation}
Now recall that $h_q(m)=h_{-q}(-m)=h_{q_r}(-m)$ and Statement~\ref{lemma h red part 2} of Lemma~\ref{lemma h red} now follows from applying Lemma~\ref{lemma root conditions} to $h_q$.

It remains to prove the relations between $m_\pm$ and $\tau_{\pm}$ stated in \eqref{tau non-red}, \eqref{tau_+ red} as well as \eqref{tau_- red}.
Let $q$ be either a non-reducible polynomial or a reducible polynomial of the form \eqref{shifted normalized}. For $\widetilde m\in (m_+^*,0)$ define
\begin{equation}
    z(\widetilde m):=-\frac{1}{\widetilde m}-\gamma(\widetilde m).
    \label{def z function}
\end{equation}
The function $z$ is real analytic on $(m_+^*,0)$ with derivative $z'(\widetilde m)=h(\widetilde m)$. By Lemma~\ref{lemma h non-red}, Part~\ref{lemma h non-red part 1} and Lemma~\ref{lemma h red}, Part~\ref{lemma h red part 1}, the function $h$ has a unique root $m_+$ in $(m_+^*,0)$ and $h(\widetilde m)>0$ for all $\widetilde m\in(m_+,0)$. Therefore,the function $z$ is invertible on $(m_+,0)$ and its inverse function $\widetilde{m}:(z_+,\infty)\to(m_+,0)$, with $z_+:=z(m_+)$, is also real analytic and monotonically increasing. Since $z'(m_+)=h(m_+)=0$, the analyticity of $\widetilde m$ cannot be extended onto any neighbourhood of $z_+$.

By \eqref{def z function}, the function $\widetilde m$ satisfies \eqref{scalar equation} and has the asymptotic behaviour $\widetilde m=-z^{-1}+\mc O(z^2)$ for $z\to\infty$. At the same time, $m$, the function uniquely defined by Proposition~\ref{prop SCE}, is a Stieltjes transform of a probability measure with support $[\tau_-,\tau_+]$. As such, it can be analytically extended to $\C\setminus[\tau_-,\tau_+]$ but not to a neighbourhood of $\tau_+$ and the extension is real-valued on $\R\setminus[\tau_-,\tau_+]$. As an extension it also satisfies \eqref{scalar equation} on $\C\setminus[\tau_-,\tau_+]$ and has the asymptotic behaviour $m=-z^{-1}+\mc O(z^2)$. In particular, the restriction of the extension to $\R\setminus[\tau_-,\tau_+]$, called $m_{\R}$, is a real analytic function that also satisfies \eqref{scalar equation}, has the asymptotic behaviour $m_{\R}=-z^{-1}+\mc O(z^2)$ and cannot be analytically extended to a neighbourhood of $\tau_+$. As satisfying \eqref{scalar equation} and having the asymptotic behaviour $m=-z^{-1}+\mc O(z^2)$ uniquely define an analytic function on a neighbourhood of $\infty$, we have $\widetilde m=m_{\R}$ on $(C,\infty)$ for some $C>0$. If we assume that $\tau_+>z_+,$ then $\widetilde m$ would be an analytical continuation of $m_{\R}$ to some neighbourhood of $\tau_+$, which is a contradiction and vice versa. Therefore we must have $z_+=\tau_+.$ In particular, we have
\begin{equation}
    \tau_+=z(m_+)=-\frac{1}{m_+}-\gamma(m_+).
\end{equation}
By an analogous argument to the left of the spectrum we find 
\begin{equation}
    \tau_-=z(m_-)=-\frac{1}{m_-}-\gamma(m_-)
\end{equation}
if $m_-$ exists. Now, let $q=q_r$ be a shifted reducible polynomial of the form \eqref{shifted normalized} such that $h$ has no root on $(0,\infty)$. Then the function $z(\widetilde m)$ defined on $(0,\infty)$ by \eqref{def z function} is a monotonously increasing analytic function on the entirety of its domain. It is therefore invertible and its inverse function, $\widetilde m$, is analytic on $(-\infty, z_\infty)$ with $z_\infty:=\lim_{x\to\infty}z(x)$ but cannot be analytically extended to a neighbourhood of $z_\infty$. Along the lines of the above argument, $m_{\R}$ would be an analytical continuation of $\widetilde m$ in a neighbourhood of $z_\infty$ if $\tau_->z_\infty$ and $\widetilde m$ would be an analytical continuation of $m_{\R}$ in a neighbourhood of $\tau_-$ if $z_\infty>\tau_-$. Thus we have $\tau_-=z_\infty$ and in particular
\begin{equation}
    \tau_-=\lim_{x\to\infty}-\frac1x-\gamma(x).
    \label{limit tau_-}
\end{equation}
For reducible $q_r$ as in \eqref{shifted normalized}, we have $A=vv^*$, $b=-\xi(v+\bar v)$ and $c=\xi^2$. Taking the limit \eqref{limit tau_-} with these parameters we obtain $\tau_-=0$.
\end{proof}

\begin{proof}[Proof of Lemma~\ref{lemma quad stab}]
Let $y \in(-1,\infty)$ and $\widehat y\in(-\frac12,\infty)$.
We define
\begin{equation}
h_1(y)=\frac{y}{y+1}
\quad
\mr{and}
\quad
h_2(\widehat y)=\frac{1}{2\widehat y+1}.
\end{equation}
Using this notation \eqref{condition lemma} can be expressed as
\begin{equation}
g(\mathbf{y},\widehat{\mb y},\mathbf{c}):=\sum_{i=1}^n h_1(y_i)^2+\sum_{j=1}^k c_j^2h^3_2(\widehat{y}_j)\leq 1
\end{equation}
and \eqref{implication lemma} as
\begin{equation}
f(\mathbf{y},\widehat{\mb y},\mathbf{c}):=\sum_{i=1}^n h_1(y_i)^3+3\sum_{j=1}^k c_j^2\widehat{y}_jh^4_2(\widehat{y}_j)\leq1-\nu
\end{equation}
To prove the lemma it is then sufficient to show that $g\leq1$ implies $f\leq1-\nu$.

First assume $g=0$. Since all summands of $g$ are non-negative, it follows that $y_i=0$ for all $i\in\llbracket n\rrbracket$ and $c_j=0$ for all $j\in\llbracket k\rrbracket$ and so $f$ vanishes as well. Thus we assume $g\neq0$ from now on and we will prove that $g\leq1$ implies $f<g$.

We will prove the cases $\widehat{y}_1\leq \frac12$, $\frac12<\widehat{y}_1<1$ and $\widehat{y}_1\geq1$ separately and start with $\widehat{y}_1\leq\frac12$. Then $\widehat y_j\leq\frac12$ for all $j$. Note that $\widehat y_j h_2^4(\widehat y_j)\leq \widehat y_1 h_2(\widehat y_1) h_2^3(\widehat y_j)$ and $h_1^3(y_i)\leq h_1(y_1)h_1^2(y_i)$ hold for all $i$ and $j$ since $\widehat{y}\mapsto \widehat{y}h_2(\widehat{y})$ and $h_1$ are monotonously increasing and the $\widehat{y}_j$ and $y_i$ are sorted in non-increasing order. 
Thus 
\begin{equation}
\begin{split}
      f(\mathbf{y},\widehat{\mb y},\mathbf{c})
   &\leq h_1(y_1)\sum_{i=1}^n h_1^2(y_i)+3\widehat{y}_1h_2(\widehat{y}_1)\sum_{j=1}^kc_j^2h^3_2(\widehat{y}_j)\\
   &\leq h_1(y_1)\sum_{i=1}^n h_1^2(y_i)+\frac34\sum_{j=1}^kc_j^2h^3_2(\widehat{y}_j)
   \leq\max\left\{\frac{y_1}{1+y_1},\frac34\right\}g(\mathbf{y},\widehat{\mb y},\mathbf{c}).
   \label{ineq y<1/2}
\end{split}
\end{equation}
The second inequality holds since $3\widehat{y}_1h_2(\widehat{y}_1)\leq\frac34$ due to $\widehat{y}_1\leq\frac12$ and the third one because all summands in both sums are non-negative. The relation $f\leq1-\nu$ for $g\leq1$ then follows in this regime.

For $\frac12<\widehat y_1<1$ we estimate every summand in $f$ but the $y_1$ term by the corresponding term in $g$ and we find
\begin{equation}
f(\mathbf{y},\widehat{\mb y},\mathbf{c})\leq g(\mathbf{y},\widehat{\mb y},\mathbf{c})-h_1(y_1)^2+h_1(y_1)^3.
\end{equation}
This upper bound is valid since $\widehat y_j\leq\widehat y_1<1$ for all $j.$ Thus for $g\leq1$ we find
\begin{equation}
    f(\mathbf{y},\widehat{\mb y},\mathbf{c})\leq g(\mathbf{y},\widehat{\mb y},\mathbf{c})-\left(\frac{y_1}{y_1+1}\right)^2\frac{1}{1+y_1}<1-\frac{2}{27},
    \label{ineq y<1}
\end{equation}
where in the last step we used $y_1\geq\widehat y_1>\frac12.$

Now let $\widehat y_1\geq 1$.
Let $k_0$ be the largest integer such that $\widehat{y}_{k_0}\geq1.$
Then
\begin{equation}
f_0(\mathbf{y},\widehat{\mb y},\mathbf{c})
:=\sum_{i=2}^{n}h_1^3(y_i)+\sum_{j=k_0+1}^k3c_j^2\widehat{y}_jh^4_2(\widehat{y}_j)
\leq\sum_{i=2}^{n}h_1^2(y_i)+\sum_{j=k_0+1}^kc_j^2h^3_2(\widehat{y}_j)
=:g_0(\mathbf{y},\widehat{\mb y},\mathbf{c})
\end{equation}
is concluded by again estimating each term individually (either of the sums might be empty). Note that $g_0\geq0$. Therefore we only need to prove that
\begin{equation}
g_1(\mathbf{y},\widehat{\mb y},\mathbf{c})
:=(g-g_0)(\mathbf{y},\widehat{\mb y},\mathbf{c})=h_1^2(y_1)+\sum_{j=1}^{k_0}c_j^2h^3_2(\widehat{y}_j)\leq 1
\end{equation}
implies
\begin{equation}
f_1(\mathbf{y},\widehat{\mb y},\mathbf{c})
:=(f-f_0)(\mathbf{y},\widehat{\mb y},\mathbf{c})
=h_1^3(y_1)+\sum_{j=1}^{k_0}3c_j^2\widehat y_jh^4_2(\widehat{y}_j)
\leq g_1(\mathbf{y},\widehat{\mb y},\mathbf{c})-\nu
\end{equation}
for some $\nu>0$, only depending on $y_1$, to conclude the proof of the lemma.

For $\widehat y\in[1,\infty)$ the inequality $h^3_2(\widehat{y})\leq3\widehat yh^4_2(\widehat{y})$ is satisfied and consequently
\begin{equation}
    \sum_{j=1}^{k_0}h^3_2(\widehat{y}_j)\leq\sum_{j=1}^{k_0}3\widehat y_jh^4_2(\widehat{y}_j)
\end{equation} holds. Therefore for any fixed $(\mathbf{y},\widehat{\mb y},\mb{c})$ and $r\in\R_+$ the difference  $(g_1-f_1)(\mb{y},\widehat{\mb y},r\mb{c})$ decreases monotonically in $r$. Hence we only need to prove $(g_1-f_1)(\mb{y},\widehat{\mb y},r^*\mb{c})\geq\nu$ for
\begin{equation}
    r^*:=\sup\{r\geq0|\,g_1(\mb{y},\widehat{\mb y},r\mb{c})\leq1\}.
\end{equation}
On the other hand $g_1(\mb{y},\widehat{\mb y},r\mb{c})$ increases monotonically in $r$ and therefore $g_1(\mb{y},\widehat{\mb y},r^*\mb{c})=1$. It is thus sufficient to prove that $g_1=1$ implies $f_1\leq1-\nu$.
Hence we assume $g_1=1$. Then 
\begin{equation}
\sum_{j=1}^{k_0}c_j^2h^3_2(\widehat{y}_j)=1-h_1^2(y_1)
\end{equation}
and
\begin{equation}
\begin{split}
    f_1(\mathbf{y},\widehat{\mb y},\mathbf{c})
    &=h_1^3(y_1)+\sum_{j=1}^{k_0}3c_j^2\widehat y_jh^4_2(\widehat{y}_j)
    \leq h_1^3(y_1)+\max_{j\in\llbracket k_0\rrbracket}\{3\widehat y_jh_2(\widehat{y}_j)\}\sum_{j=1}^{k_0}c_j^2h^3_2(\widehat{y}_j)\\
    &\leq h_1^3(y_1)+3\widehat y_1h_2(\widehat{y}_1)(1-h_1^2(y_1))
    \leq h_1^3(y_1)+3 y_1h_2(y_1)(1-h_1^2(y_1))\\
    &=\frac{1}{(1+y_1)^3}(y_1^3+3y_1^2+3y_1)=1-\frac{1}{(1+y_1)^3}.
\end{split}
\end{equation}
The inequalities holds because $yh_2(y)$ increases monotonically in $y$ and $y_1\geq\widehat y_1\geq \widehat y_j$ for all $j$. Combining this with \eqref{ineq y<1/2} and \eqref{ineq y<1} concludes the proof of the lemma.
\end{proof}

\begin{proof}[Proof of Lemma~\ref{lemma root conditions}]
Note that $h(m)>0$ for $m$ close to 0, since the $m^{-2}$ term is dominating $h$ from \eqref{function h} in this regime. First, we consider the situation, where $h$ has a pole in $(-\infty,0)$. In other words, we have $m_+^*>-\infty$, where $m_+^*$ was defined in \eqref{eq largest pole}.
All poles of $h$ on $(-\infty,0)$ are either of order two or of order three so the pole at $m_+^*$ will be as well. If the pole is of order two, then the behaviour around $m^*$ is given by $\lim_{m\to m_+^*}h(m)=-\infty$ and if it is of order three $h$ behaves like $\lim_{m\searrow m_+^*}h(m)=-\infty$ and $\lim_{m\nearrow m_+^*}h(m)=\infty$. Since $h$ is continuous outside of its poles, the existence of a pole thus implies the existence of a root in $(m_+^*,0)$. 
Now let $h$ have no poles in $(-\infty,0),$ i.e. $m_+^*=-\infty.$ Then $h$ is analytic on $(-\infty,0)$ and $h$ has a root in $(-\infty,0)$ if and only if $h(m)$ is negative for some $m\in(-\infty,0).$

We separate multiple cases.
\begin{case}[$\rank(A)\geq2$]
    Either $h$ has a pole in $(-\infty,0)$ or $ b^t(1+2m\widehat{A})^{-3} b\geq0$ for all $m\in(-\infty,0),$ thus
    \begin{equation}
        h(m)\leq \frac{1}{m^2}-\Tr A^2(1+mA)^{-2}=-\frac{\rank(A)-1}{m^2}+\mathcal{O}(m^{-3})<0
    \end{equation}
    for sufficiently large $-m$ and $h$ has a root either way.
\end{case}
\begin{case}[$\rank(A)=1$ and $A\geq0$]
    The matrices $A$ and $\widehat{A}$ have a positive eigenvalue, therefore $h$ will have a pole in $(-\infty,0)$ and thus also a root. 
\end{case}
\begin{case}[$\rank(A)=1$, $A\leq0$ and $ b\notin\Image\widehat A$]
    In this case, $h$ has no pole on $(-\infty,0)$ and there is a vector $ w_0$ in the kernel of $\widehat A$ such that $\langle w_0, b\rangle\neq0.$ From \eqref{function h} we have that $h$ is estimated
    \begin{equation}
        h(m)\leq\frac{1}{m^2}-|\langle w_0, b\rangle|^2<0,
    \end{equation}
    where the last inequality holds for $-m$ sufficiently large and so $h$ has a root as well.
\end{case}
\begin{case}[$\rank(A)=1$, $A\leq0$, $ b\in\Image\widehat A$ and $A\in\R^{l\times l}$]
    Since $A$ is real-symmetric, we have $A=\widehat A$ and $\widehat A$ is also a rank one matrix and $h$ has no poles in $(-\infty,0)$. Either $ b=0$ or $ b$ is an eigenvector of $\widehat A$ with eigenvalue $\mu=-\Vert A\Vert$ since $A=\widehat A$, $\rank A=1$ and $A\leq0.$ In both cases $h$ becomes
    \begin{equation}
        h(m)=\frac{1}{m^2}\left(1-\frac{(\mu m)^2}{(1+\mu m)^2}\right)-\Vert b\Vert^2\frac{1}{(1+2\mu m)^3}
    \end{equation}
    To find the asymptotics for $m\to-\infty$ consider $0<\zeta<1$, with $\zeta=(\mu m)^{-1}$. We find
    \begin{equation}
        \begin{split}
            h(m)&=\Vert A\Vert^2\zeta^2\left(1-\frac{1}{(\zeta+1)^2}\right)-\frac{\Vert b\Vert^2}{8}\frac{\zeta^3}{(\frac12\zeta+1)^3}\\
            &=\zeta^3\left(\Vert A\Vert^2\sum_{k=0}^\infty (k+2)(-\zeta)^k-\frac{\Vert b\Vert^2}{8}\sum_{k=0}^\infty \frac{(k+1)(k+2)}{2}(-\frac12\zeta)^k\right)\\
            &=\left(2\Vert A\Vert^2-\frac{\Vert b\Vert^2}{8}\right)\zeta^3+3\left(\frac{\Vert b\Vert^2}{16}-\Vert A\Vert^2\right)\zeta^4+\left(4\Vert A\Vert^2-\frac{3\Vert b\Vert^2}{8}\right)\zeta^5+\mathcal{O}(\zeta^6).
        \end{split}
    \end{equation}
    Therefore $h$ has a root if $\Vert  b\Vert>4\Vert A\Vert$. If $\Vert  b\Vert\leq4\Vert A\Vert$ then the leading order coefficient of the expansion is positive. Since $h(m)\geq0$ implies $h'(m)>0$ this implies $h(m)>0$ for all $m\in(-\infty,0)$. Therefore there is no root and the first two cases in \eqref{h asymptotics} follow.    
\end{case}
\begin{case}[$\rank(A)=1$, $A\leq0$, $ b\in\Image\widehat A$ and $A\in\C^{l\times l}\setminus\R^{l\times l}$]
    Since $A$ is not real-symmetric, we have $\rank \widehat A=2$ by Lemma~\ref{lemma hat A} and $h$ has no poles in $(-\infty,0)$. Since $ b\in\Image\widehat A$, the function $h$ can be expressed as
    \begin{equation}
        h(m)=\frac{1}{m^2}\left(1-\frac{(\mu m)^2}{(1+\mu m)^2}\right)- |\langle b, w_+\rangle|^2\frac{1}{(1+2\mu_+ m)^3}-|\langle b, w_-\rangle|^2\frac{1}{(1+2\mu_- m)^3}
    \end{equation}
    To find the asymptotics for $m\to-\infty$ consider $0<\zeta<1$, with $\zeta=(-\Vert A\Vert m)^{-1}$. We find
    \begin{equation}
        \begin{split}
            h(m)
            =&\Vert A\Vert^2\zeta^2\left(1-\frac{1}{(\zeta+1)^2}\right)
            -\frac{|\langle b, w_+\rangle|^2}{8r_+^3}\frac{\zeta^3}{(\frac1{2r_+}\zeta+1)^3}
            -\frac{|\langle b, w_-\rangle|^2}{8r_-^3}\frac{\zeta^3}{(\frac1{2r_+}\zeta+1)^3}\\
            =&\left(2\Vert A\Vert^2-\frac{|\langle b, w_+\rangle|^2}{8r_+^3}-\frac{|\langle b, w_-\rangle|^2}{8r_-^3}\right)\zeta^3
            \\ & \qquad \qquad \qquad \qquad +3\left(\frac{|\langle b, w_+\rangle|^2}{16r_+^4}-\frac{|\langle b, w_-\rangle|^2}{16r_-^4}-\Vert A\Vert^2\right)\zeta^4
            +\mathcal{O}(\zeta^5).
        \end{split}
    \end{equation}
    By an analogous consideration to the above case, $h$ has a root if and only if 
    \begin{equation}
        \frac{|\langle b, w_+\rangle|^2}{r_+^3}+\frac{|\langle b, w_-\rangle|^2}{r_-^3}>(4\Vert A\Vert)^2
    \end{equation} and the remaining two cases in \eqref{h asymptotics} follow.    
\end{case}
\end{proof}

\section{Linearization}

Throughout the remainder of the paper, we suppress the index of the identity matrix if it is in dimension $l+1$ or in $(l+1)N$, i.e. $I:=I_{l+1}\in\C^{(l+1)\times (l+1)}$ and $\mb I:=\mb I_{(l+1)N}\in\C^{(l+1)N\times (l+1)N}$. In all other cases, we still write out the dimension in the index. 

To prove that $\mb g$ converges towards $m$ in the sense laid out in Theorem~\ref{thm LL}, we use the linearization method, which we briefly introduce here. For more details on the construction of linearizations in the context of random matrices, see e.g. \cite{erdos2019local, Haagerup2006, haagerup2005}. First, assume that $A$ is invertible. We define the linearization $\mb L$ of $q$ as
\begin{equation}
\mb L=K_0+\sum_{j=1}^lK_j\otimes \mb X_j\in\C^{(l+1)N\times(l+1)N},
\label{eq linearization}
\end{equation}
where
\begin{equation}
K_0
=\begin{pmatrix}
c&0\\
0&-A^{-1}
\end{pmatrix}
\in\C^{(l+1)\times(l+1)}
\quad
\text{and}
\quad
K_j
=\begin{pmatrix}
b_j&e_j^t\\
e_j&0
\end{pmatrix}
\in\C^{(l+1)\times(l+1)}
\label{eq K matrices}
\end{equation}
for $j\in\llbracket l\rrbracket$ and $e_j$ being the $j^{\mr{th}}$ standard Euclidean base vector. Here we made use of our convention to identify any matrix $R\in\C^{k\times n}$, $k,n\in\N$ with $R\otimes \mb I_N\in\C^{kN\times nN}$ introduced in Section~\ref{Sec notations}. Let $J\in\C^{(l+1)\times (l+1)}$ be the orthogonal projection onto the first entry. For $\delta\in[0,1]$ and $z=E+\I\eta\in\HH$ we define 
\begin{equation}
\mb G_\delta=(\mb L-zJ-\I\eta\delta(\mb I-J))^{-1}\in\C^{(l+1)N\times(l+1)N}
\end{equation}
$\mb G_\delta$ is a generalized resolvent and using the Schur complement formula we obtain
\begin{equation}
\mb G_\delta=
\begin{pmatrix}
\mb g_\delta & 
\mb g_\delta\mb X^tA_\delta\\
A_\delta\mb X\mb g_\delta&
-A_\delta+A_\delta\mb X \mb g_\delta\mb X^tA_\delta
\end{pmatrix}, 
\label{resolvent delta}
\end{equation}
where
\begin{equation}
A_\delta=A(I_l+\I\delta\eta A)^{-1}\in\C^{l\times l}
\quad
\text{and}
\quad
\mb g_\delta=\left(\mb X^tA(I_l+\I\delta\eta A)^{-1}\mb X+ b^t\mb X+c-z\right)^{-1}\in\C^{N\times N}.
\end{equation}
Note that $I_l+\I\delta\eta A$ is invertible for all $\eta>0$ and $\delta\in[0,1]$ as $A$ is Hermitian. In particular we find
\begin{equation}
   (\mb G_0)_{11}=\mb g\in\C^{N\times N}. 
\end{equation}
 This justifies, why $\mb L$ is called the linearization of $q$. The $\delta\neq0$ case adds an additional regularization, which we use to prove the local law, Proposition~\ref{prop LLL}, for $\delta=0$. $\mb G_\delta$ satisfies the equation
\begin{equation}
\mb I+(zJ+\I\eta\delta(\mb I-J)-K_0+\mc S[\mb G_\delta])\mb G_\delta=\mb D,
\label{perturbed DE}
\end{equation}
where
\begin{equation}
\mc S[\mb R]=\E[(\mb L-\E[\mb L])\mb R(\mb L-\E[\mb L])]
\end{equation}
is called the self-energy operator and $\mb D$ is the error term. It is defined by
\begin{equation}
\mb D:=\mc S[\mb G_\delta]\mb G_\delta+(\mb L-K_0)\mb G_\delta.
\label{error delta}
\end{equation}
We split the self energy term into $\widetilde\Gamma$ and $\mc S_{\mr o}$, i.e. $\mc S=\widetilde\Gamma+\mc S_{\mr o}$. 
The first part, $\widetilde\Gamma$, depends only on the averaged trace of its argument, i.e. for all $R\in\C^{(l+1)N\times (l+1)N}$ we have
\begin{equation}
    \widetilde\Gamma[\mb R]=\Gamma[\underline{\mb R}].
    \label{eq Gamma big small}
\end{equation}
The blockwise trace, $\underline{\mb R}$, was introduced in \eqref{underline op} and $\Gamma: \C^{(l+1)\times (l+1)}\to  \C^{(l+1)\times (l+1)}$ is given by
\begin{equation}
\Gamma\left[
\begin{pmatrix}
    \omega &  v^t\\
     w & T
\end{pmatrix}
\right]
=
\begin{pmatrix}
    \omega\Vert b\Vert^2+ b^t( v+ w)+\Tr T & \omega b^t+ w^t\\
    \omega  b+ v & \omega I_l
\end{pmatrix},
\label{Gamma formula}
\end{equation}
where $\omega\in\C$, $ v, w\in\C^l$ and $T\in\C^{l\times l}$. In \eqref{eq Gamma big small} we have made use of our convention to identify $\Gamma[\underline{\mb R}]$ with $\Gamma[\underline{\mb R}]\otimes \mb I_N$

The second term $\mc S_{\mr{o}}: \C^{(l+1)N\times (l+1)N}\to  \C^{(l+1)N\times (l+1)N}$ is given by
\begin{equation}
    \mc S_{\mr{o}}\left[
    \begin{pmatrix}
        \bm \omega & \mb V^{t}\\
        \mb W & \mb T
    \end{pmatrix}
    \right]
    =\frac 1N \E\zeta_1^2
    \begin{pmatrix}
        \Vert  b\Vert^2 \bm\omega^{(o)}+ b^t(\mb V^{(o)}+\mb W^{(o)})+\Tr_{\mr b}\mb T^{(o)}
        & \bm\omega^{(o)} b^t +\mb W^{(o)}\\
        \bm\omega^{(o)} b +  (\mb V^t)^{(o)} 
        & I_l\otimes\bm\omega^{(o)}
    \end{pmatrix},
    \label{eq SE small}
\end{equation}
where $\bm \omega\in\C^{N\times N}$, $\mb V=(\mb V_i)_{i\in\llbracket l\rrbracket}\in(\C^{N\times N})^l$, $\mb W=(\mb W_i)_{i\in\llbracket l\rrbracket}\in(\C^{N\times N})^l$ and $\mb{T}\in\C^{lN\times lN}$. For any $\mb{R}=(\mb{R}_{ij})_{i\in\llbracket k\rrbracket,j\in\llbracket n\rrbracket}\in\C^{kN\times nN}$, $n,k\in\N$, we define $\mb{R}^{(o)}$ by  $(\mb{R}^{(o)})_{ij}:=\mb{R}_{ji}^t-\diag(\mb{R}_{ji})\in\C^{N\times N}$. For $\mb{R}=(\mb{R}_{ij})_{i,j\in\llbracket l\rrbracket}\in\C^{lN\times lN}$ the block-trace $\Tr_{\mr b}$ is defined by 
\begin{equation}
    \Tr_{\mr b}\mb R=\sum_{i=1}^l\mb R_{ii}\in\C^{N\times N}.
\end{equation}

Equation \eqref{perturbed DE} without the error term and $\mc S_{\mr{o}}$ is called the Dyson equation (DE) and its solution  is denoted by $M_\delta=M_\delta(z)\in\C^{(l+1)\times(l+1)}$, i.e.
\begin{equation}
I+(zJ+\I\eta\delta(I-J)-K_0+\Gamma[M_\delta])M_\delta=0\in\C^{(l+1)\times(l+1)}.
\label{DE}
\end{equation}
In the next subsection, we will lay out in what sense $\mb G_\delta$ is close to $M_\delta$. \cite[Lemma~2.6]{erdos2019local} asserts the existence of a unique analytic solution to \eqref{DE} for $\delta=0$ and \cite[Theorem~2.1]{helton2007operatorvalued} guarantees a unique solution for $\delta\in(0,1]$.
For $m_\delta\in\C$, $v_\delta,w_\delta\in\C^l$ and $\widehat M_\delta\in \C^{l\times l}$ we partition $M_\delta$ as 
\begin{equation}
    M_\delta:=\begin{pmatrix}
        m_\delta & v_\delta^t\\
        w_\delta & \widehat M_\delta
    \end{pmatrix}.
    \label{M_delta components}
\end{equation}
Since $M_\delta$ solves \eqref{DE} it is invertible and by the Schur complement formula its inverse is given by
\begin{equation}
    M_\delta^{-1}=
    \begin{pmatrix}
        m^{-1}+m^{-2} v^t_\delta\left(\widehat M_\delta-w_\delta m_\delta^{-1}v_\delta^t\right)^{-1} w_\delta 
        & -m^{-1}v_\delta^t\left(\widehat M_\delta-w_\delta m_\delta^{-1}v_\delta^t\right)^{-1} \\
        -m^{-1}\left(\widehat M_\delta-w_\delta m_\delta^{-1}v_\delta^t\right)^{-1}w_\delta
        &\left(\widehat M_\delta-w_\delta m_\delta^{-1}v_\delta^t\right)^{-1}   
    \end{pmatrix}.
    \label{M inverse Schur}
\end{equation}
At the same time by \eqref{DE} the inverse of $M_\delta$ can be expressed as
\begin{equation}
    M_\delta^{-1}=
    \begin{pmatrix}
    -z-m_\delta b^tb- b^t( v_\delta+ w_\delta)-\Tr \widehat M_\delta 
    & -m_\delta b^t- w_\delta^t\\
    -m_\delta b-v &
    -A^{-1}-(m_\delta+\I\eta\delta)I_l
\end{pmatrix}.
    \label{M inverse DE}
\end{equation}
Comparing the two expressions for $M_\delta^{-1}$, we obtain a set of equations for $m_\delta,v_\delta,w_\delta$ and $\widehat M_\delta$. Solving them we find explicit expressions for $v_\delta,w_\delta$ and $\widehat M_\delta$ in terms of $m_\delta$.
In other words, $M_\delta=M_\delta[m_\delta]$ can be expressed purely in terms of its (1,1) entry with $M_\delta[x]$ given by
\begin{equation} 
M_\delta[x]:=
\begin{pmatrix}
x&-xb^tV_\delta(x) A_\delta\\
-x A_\delta V_\delta(x)  b&-A_\delta(I_l+xA_\delta)^{-1}+xA_\delta V_\delta(x) b b^tV_\delta(x) A_\delta
\end{pmatrix}
\label{sol DE}
\end{equation}
with
\begin{equation}
V_\delta(x):=x (I_l+2x\widehat{A}_\delta)^{-1}.
\label{eq B delta}
\end{equation}
Note that we use square brackets to denote $M_\delta$ as a function of its (1,1) entry. This is done to avoid confusion with $M_\delta(z)$, which denotes $M_\delta$ as a function of the spectral parameter $z.$ The two functions are related by 
$M_\delta[m_\delta(z)]=M_\delta(z)$.
The entry $m_\delta$ satisfies the equation
\begin{equation}
-m_\delta^{-1}=z+\gamma_\delta(m_\delta),
\label{scalar eq delta}
\end{equation}
where
\begin{equation}
\gamma_\delta(x):=-\Tr A_\delta(I_l+A_\delta x)^{-1}+ b^t(V_\delta(x)-V_\delta(x)\widehat{A}_\delta V_\delta(x)) b-c.
\end{equation}
and $\widehat{A}_\delta=\frac12(A_\delta+A_\delta^t)$.
For $\delta=0$, Equation~\eqref{scalar eq delta} corresponds precisely to \eqref{scalar equation}.
From here on we no longer assume that $A$ is invertible and instead, we use the Equations $\eqref{resolvent delta}$, \eqref{error delta} and \eqref{sol DE} as the definition for $\mb G_\delta$, $\mb D$ and $M_\delta=M_\delta[m_\delta]$ respectively. Note that this is also well defined for \eqref{error delta} as $K_0$ and $\mb L$, defined in \eqref{eq K matrices} and \eqref{eq linearization} respectively, depend on $A^{-1}$, but $\mb L-K_0$ does not. The function $m_\delta$ is uniquely defined by the following lemma.
\begin{lemma}
    Let $\rho$ have a regular edge at $\tau_\pm$ and let $\delta\in(0,1]$. The function $m_\delta$ is uniquely defined by the following criteria around the edge and away from the spectrum.
    \begin{enumerate}
        \item There is a $u>0$ only depending on the coefficients of $q$ such that for all $z\in\HH$ with $|z-\tau_\pm|<u$ there is a unique function $m_\delta=m_\delta(z)$ that solves \eqref{scalar eq delta} and satisfies
    \begin{equation}
        \Im m_\delta\sim
        \begin{cases}
            \sqrt{\kappa+\eta} &\text{if }E\in\supp(\rho)\\
            \frac{\eta}{\sqrt{\kappa+\eta}}&\text{if }E\notin\supp(\rho)
        \end{cases}
        \quad
        \text{as well as}
        \quad
        |m_\delta-m_\pm|^2\sim|z-\tau_\pm|,
        \label{eq m_delta asymptotics}
    \end{equation}
    with $\kappa=|E-\tau_\pm|$ and $m_\pm=\lim_{\eta\searrow0}m(\tau_\pm+\I\eta)$.
    \item For all $C>0$ there is an $\eta_0$ such that for all $z=E+\I\eta\in\HH$ with $C^{-1}<\dist(E,\supp(\rho))<C$ and $\eta<\eta_0$ there is a unique function $m_\delta=m_\delta(z)$ that solves \eqref{scalar eq delta} and satisfies
    \begin{equation}
        \Im m_\delta\sim_C\eta
        \quad
        \text{as well as}
        \quad
        |m_\delta(z)-m(E)|\sim\eta,
        \label{eq m_delta away from rho}
    \end{equation}
    with $m(E)=\lim_{\eta\searrow0}m(E+\I\eta)$.
    \end{enumerate}
    In each case $M_\delta=M_\delta[m_\delta]$ is also the unique solution to \eqref{DE} with $\Im M_\delta>0$ if the coefficient matrix $A$ is invertible.
    \label{lemma m delta asymptotics}
\end{lemma}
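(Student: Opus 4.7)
The plan is to view the scalar equation \eqref{scalar eq delta} as a small perturbation of the $\delta=0$ case analyzed in Section~3 and to transfer the edge and off-spectrum characterizations of $m_0=m$ from Corollary~\ref{cor m reg} to all $\delta\in(0,1]$. Set $z_\delta(m):=-1/m-\gamma_\delta(m)$, so that $z_\delta'(m)=m^{-2}-\gamma_\delta'(m)=:h_\delta(m)$; at $\delta=0$ this recovers $z_0=z$ and $h_0=h$ from Definition~\ref{def h}. Since $A_\delta-A=-\I\delta\eta A^2(I_l+\I\delta\eta A)^{-1}$ we have $\Vert A_\delta-A\Vert\lesssim\delta\eta$, and hence $|\gamma_\delta(m)-\gamma(m)|+|h_\delta(m)-h(m)|\lesssim\delta\eta$ uniformly for $m$ in any compact set bounded away from $\IS(\gamma)$.

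Near a regular edge $\tau_\pm$, Lemma~\ref{lemma h non-red} and Lemma~\ref{lemma h red} give $h(m_\pm)=0$, $\pm h'(m_\pm)>0$ and $z(m_\pm)=\tau_\pm$. Combined with the $\delta\eta$ bound above one obtains the expansion
\begin{equation*}
z_\delta(m)-\tau_\pm=\tfrac12 h'(m_\pm)(m-m_\pm)^2+\mc O(\delta\eta)+\mc O(\delta\eta\,|m-m_\pm|)+\mc O(|m-m_\pm|^3).
\end{equation*}
I would choose $u>0$ small (depending only on the coefficients of $q$) so that for $|z-\tau_\pm|\leq u$ the cubic remainder is dominated by the quadratic main term. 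Using $\delta\eta\leq\eta\leq|z-\tau_\pm|$, the $\mc O(\delta\eta)$ corrections are controlled by $|z-\tau_\pm|$, and a Rouch\'e-type fixed point argument in the uniformizer $w=m-m_\pm$ produces exactly two roots of $z_\delta(m)=z$. The requirement $m_\delta\in\HH$ together with the sign selection in \eqref{eq m_delta asymptotics} singles out one of them, and the asymptotics $|m_\delta-m_\pm|^2\sim|z-\tau_\pm|$ as well as the claimed expression for $\Im m_\delta$ are read off directly from this quadratic expansion.

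Away from the spectrum, $h(m(E))\sim1$ is bounded away from zero uniformly on $C^{-1}\leq\dist(E,\supp(\rho))\leq C$, so the standard implicit function theorem applied to $z_\delta(m)=z$ near $(m(E),E)$ produces, for $\eta\leq\eta_0$ small enough, a unique solution $m_\delta$ with $|m_\delta-m(E)|\lesssim|z-E|+\delta\eta\lesssim\eta$. Differentiating once more yields $\Im m_\delta=\eta/h_\delta(m_\delta)+\mc O(\delta\eta)\sim_C\eta$, which is \eqref{eq m_delta away from rho}. For the final claim, when $A$ is invertible \cite[Theorem~2.1]{helton2007operatorvalued} ensures that \eqref{DE} admits at most one solution with $\Im M_\delta>0$, so it suffices to check $\Im M_\delta[m_\delta]>0$, which follows from a direct Schur-complement calculation in \eqref{sol DE} using $\Im m_\delta>0$ and the structure of $\widehat{A}_\delta$ for $\delta>0$.

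The principal technical hurdle is the degeneracy $h(m_\pm)=0$ at the edge, where the ordinary implicit function theorem fails already at $\delta=0$; the fix is to work in the square-root uniformizer $w=m-m_\pm$ and to exploit the inequality $\delta\eta\leq|z-\tau_\pm|$ so that the $\delta$-perturbation does not overwhelm the quadratic main term in the expansion of $z_\delta(m)-\tau_\pm$.
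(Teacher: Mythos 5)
Your perturbative set-up for parts (1) and (2) is the same as the paper's, but as written it only yields \emph{upper} bounds, while \eqref{eq m_delta asymptotics} and \eqref{eq m_delta away from rho} are two-sided. You treat the $\delta$-dependence as a generic error of size $\mc O(\delta\eta)$ and argue it is ``controlled by $|z-\tau_\pm|$''; since the implicit constant (of order $\Vert A\Vert^2$, $\Vert b\Vert^2$, etc.) can exceed $1$ and $\delta\eta$ can be comparable to $\eta=\Im(z-\tau_\pm)$, such an error could in principle cancel the imaginary part of the source term. In that scenario the effective right-hand side of your quadratic (resp.\ linear) equation could be much smaller than $|z-\tau_\pm|$ (resp.\ $\eta$), and the lower bounds in $|m_\delta-m_\pm|^2\sim|z-\tau_\pm|$, $\Im m_\delta\sim\eta/\sqrt{\kappa+\eta}$ for $E\notin\supp(\rho)$, $\Im m_\delta\sim_C\eta$ and $|m_\delta(z)-m(E)|\sim\eta$ do not follow. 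The paper's proof closes exactly this point by computing the leading correction explicitly: in \eqref{expansion m delta} the $\delta$-term enters as $\I\dot\gamma_0(m(x))\delta\eta$ with $\dot\gamma_0>0$, i.e.\ as a shift with \emph{positive} imaginary part that reinforces, rather than cancels, $\I\eta$. You need this sign information (or an equivalent structural fact) to ``read off'' the two-sided asymptotics; the Rouch\'e count of two roots and the selection of the one in $\HH$ are fine once this is in place.

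For the final claim your route also has an unjustified step. Uniqueness from \cite[Theorem~2.1]{helton2007operatorvalued} (the paper actually uses \cite[Lemma~5.4]{haagerup2005}) is fine, but the burden is to show that the specific root $m_\delta$ singled out by the asymptotics satisfies $\Im M_\delta[m_\delta]>0$; calling this ``a direct Schur-complement calculation'' hides the hard part. From \eqref{sol DE} the $(2,2)$ block is $-A_\delta(I_l+m_\delta A_\delta)^{-1}+m_\delta A_\delta V_\delta bb^tV_\delta A_\delta$ with non-Hermitian $A_\delta$, and positivity of the full imaginary part does not follow from $\Im m_\delta>0$ alone in any obvious way; indeed, if it held for every solution of \eqref{scalar eq delta} in $\HH$ it would already imply a uniqueness statement that is not at all evident. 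The paper instead constructs the positive solution via free semicircular operators as in \eqref{linearization SC}, notes that its $(1,1)$ entry solves \eqref{scalar eq delta}, proves the quantitative closeness $|(M_\delta)_{11}-m|\lesssim_E\delta\eta$ away from the spectrum (see \eqref{eq diff M delta m}), and then identifies $(M_\delta)_{11}=m_\delta$ by the uniqueness established in parts (1)--(2) together with a continuity argument. You should either supply an actual proof of $\Im M_\delta[m_\delta]>0$ or adopt such an identification argument; as it stands, the last sentence of the lemma is not established.
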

The lemma is proven in Appendix~\ref{Appendix DE}.

\subsection{Local law for the linearization\label{SS LLL}}
\begin{definition}[Shifted square of a Wigner matrix]
    A polynomial $q$ as in \eqref{eq polynomial} with $A\in\R^{l\times l},$ $\rank A=1$ and $b=0$ is called a \textbf{shifted square of a Wigner matrix.}
    \label{def shifted square}
\end{definition}
\begin{remark}
    If a polynomial $q$ satisfies the above definition, then there is some $v\in\R^l$ with $v\neq0$ and 
    \begin{equation}
        q(\mb X)=\pm(v^t\mb X)^2+c.
    \end{equation}
    Here, $\mb W:=\frac{1}{\Vert v\Vert} v^t\mb X$ is a Wigner matrix, normalised such that $\E |w_{ij}|^2 = \frac{1}{N}$ and we have 
    \begin{equation}
        q(\mb X)=\pm\Vert v\Vert^2\mb W^2+c.
    \end{equation}
    This justifies the terminology in the above definition. 
\end{remark}

In case $q$ is not a shifted square of a Wigner matrix, we will prove that $M_0$ and $\mb G_0$ are close to each other around any regular edge. 
As $m$ and $\mb g$ are sub-matrices of $M_0$ and $\mb G_0$ respectively, this also implies closeness of $\mb g$ and $m$. This result, a local law for the linearization, is presented below in Propositions~\ref{prop LLL} and \ref{prop LLL away}. For $q$ being a shifted square of a Wigner matrix, we will prove the local law, Theorem~\ref{thm LL} and Proposition~\ref{prop LL away}, directly without the use of a linearization. The reason why we prove this case separately is that the stability operator $\IL$, defined below in \eqref{eq stab operator}, does have an additional unstable direction.

\begin{prop}[Edge local law for the linearization]
Let $q$ be a polynomial of the form \eqref{eq polynomial} that is not a shifted square of a Wigner matrix and let the corresponding $\rho$ have a regular edge at $\tau_0$. There is a $\kappa_0>0$ depending only on the parameters of $q$ such that for all $\veps,\gamma,D>0$, $\delta\in\{0,1\}$ and $z=E+\I\eta\in\mathbb{D}_{\gamma}^{\kappa_0}$ the isotropic local law
\begin{equation}
\Pb\left(|\langle \mb x, (\mb G_\delta-M_\delta)\mb y\rangle|>\Vert\mb x\Vert\Vert\mb y\Vert N^\veps\left(\frac{\Im m}{\sqrt{N\eta}}+\frac{1}{N\eta}\right)\right)\lesssim_{\veps,\gamma,D}N^{-D}
\label{eq lll iso}
\end{equation}
holds for all deterministic $\mb x,\mb y\in\C^{(l+1)N}$.
Moreover, the averaged local law
\begin{equation}
\Pb\left(|\langle \mb B(\mb G_\delta-M_\delta)\rangle|>\Vert \mb B\Vert\frac{N^\veps}{N\eta}\right)\lesssim_{\veps,\gamma,D}N^{-D}
\label{eq lll av}
\end{equation}
holds for all deterministic $\mb B\in\C^{(l+1)N\times (l+1)N}$.
For $E\notin\supp(\rho)$, an improved averaged local law of the form
\begin{equation}
\Pb\left(|\langle \mb B(\mb G_\delta-M_\delta)\rangle|>\Vert \mb B\Vert N^\veps\left(\frac{1}{N(\kappa+\eta)}+\frac{1}{(N\eta)^2\sqrt{\kappa+\eta}}\right)\right)\lesssim_{\veps,\gamma,D}N^{-D},
\label{eq lll imp}
\end{equation}
with $\kappa=|E-\tau_0|$, is obtained.
 \label{prop LLL}
\end{prop}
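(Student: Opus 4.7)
The plan is to carry out the standard optimal edge local-law argument, adapted to the generalized resolvent $\mb G_\delta$ of the linearization $\mb L$ introduced in this section. The starting point is the approximate Dyson equation~\eqref{perturbed DE} satisfied by $\mb G_\delta$, which differs from the Dyson equation~\eqref{DE} for $M_\delta$ through the error term $\mb D$ from~\eqref{error delta} and the fluctuation term $\mc S_{\mr o}[\mb G_\delta]$ from~\eqref{eq SE small}. Subtracting the two and setting $\mb Y := \mb G_\delta - M_\delta$ one derives, after applying the blockwise trace~\eqref{underline op}, an identity of the form
\begin{equation*}
    \IL[\underline{\mb Y}] = \underline{M_\delta \mb D M_\delta} - \underline{M_\delta\mc S_{\mr o}[\mb G_\delta] M_\delta} + \text{quadratic in }\mb Y,
\end{equation*}
where $\IL := \mathbb{1} - M_\delta\Gamma[\cdot]M_\delta$ is the stability operator on $\C^{(l+1)\times(l+1)}$.

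First, I would carry out a detailed spectral analysis of $\IL$ at the regular edge. Using the explicit block form~\eqref{sol DE} of $M_\delta = M_\delta[m_\delta]$ together with the asymptotics of $m_\delta$ at $\tau_0$ supplied by Lemma~\ref{lemma m delta asymptotics} and Corollary~\ref{cor m reg}, one reduces the eigenvalue problem for $\IL$ to a scalar identity governed by the derivative structure encoded in $h(m)$ from Definition~\ref{def h}. Because $h'(m_0)\neq 0$ (which is precisely the statement of regularity of the edge) and because $q$ is not a shifted square of a Wigner matrix (so that the rank-one extra degeneracy of $\IL$ discussed at the start of this subsection is absent), one obtains exactly one soft eigendirection $\ell_*$ of $\IL$ with eigenvalue of order $\sqrt{|z-\tau_0|+\eta}$, while all other eigenvalues stay bounded below. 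Projecting the self-consistent equation onto $\ell_*$ and onto the stable complement then yields a cubic equation
\begin{equation*}
    \sqrt{|z-\tau_0|+\eta}\,\Theta + \mu\Theta^2 + \pi\Theta^3 = \mc E,
\end{equation*}
with $\mu,\pi\sim 1$, for the projection $\Theta := \langle \ell_*,\underline{\mb Y}\rangle$, in which $\mc E$ collects the contributions of $\mb D$ and $\mc S_{\mr o}[\mb G_\delta]$.

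Second, I would bound the error matrix $\mb D$. Following the approach of \cite{ERDOS_2019SCD}, I would apply a cumulant expansion to each independent Wigner block $\mb X_j$ entering $\mb L$ through~\eqref{eq linearization} to obtain high-moment bounds
\begin{equation*}
    \|\mb D\|_p^{\mr{av}} \lesssim_p \frac{1}{N\eta},
    \qquad
    \|\mb D\|_p \lesssim_p \sqrt{\frac{\Im m}{N\eta}} + \frac{1}{N\eta},
\end{equation*}
exploiting the Ward identity $\mb G_\delta\mb G_\delta^* = \eta^{-1}\Im\mb G_\delta$ for the generalized resolvent (this requires $\delta>0$ to be inserted initially, which is the role of the regularizing parameter $\delta$). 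The term $\underline{M_\delta\mc S_{\mr o}[\mb G_\delta]M_\delta}$ requires separate handling; it is estimated using the explicit prefactor $N^{-1}\E\zeta_1^2$ in~\eqref{eq SE small} together with the off-diagonal/transpose structure encoded in $\mb R^{(o)}$.

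Third, I would close the argument through the standard bootstrap: starting from $\eta\sim 1$ where a crude bound on $\|\mb Y\|$ follows from a resolvent expansion, descend along a discrete lattice in $\mathbb D_\gamma^{\kappa_0}$. At each step, inserting the current control on $\mb Y$ into the cubic equation and applying the quantitative stability estimates for cubic equations at square-root edges (as in \cite{Alt_2020edge, erdos2019local}) produces an improved bound on $\Theta$; the stable projection is controlled by direct inversion of $\IL$, and the isotropic bound~\eqref{eq lll iso} is then recovered from the averaged bound via the Ward identity in the standard way. The refined bound~\eqref{eq lll imp} outside the spectrum follows from the same cubic analysis when the square-root term dominates. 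Finally, one removes the regularization by letting $\delta\to 0$, using the continuity in $\delta$ of both $M_\delta$ and $\mb G_\delta$.

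The main obstacle is the spectral analysis of $\IL$ at the regular edge: one must show quantitatively that, in the absence of the shifted-square degeneracy, there is exactly one soft direction of $\IL$ whose eigenvalue scales like $\sqrt{|z-\tau_0|+\eta}$, and one must identify the cubic coefficient $\pi$ as nonzero so that the cubic equation is genuinely stable. Everything else is a (technically demanding but fairly standard) implementation of the cumulant expansion plus bootstrap machinery.
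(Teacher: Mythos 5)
Your overall architecture (stability analysis of $\IL$ at the regular edge via the scalar function $h$, cumulant-expansion bounds on $\mb D$ in the spirit of \cite{ERDOS_2019SCD}, a low-order equation for the unstable projection $\Theta$, and a bootstrap started from a global law) is the same as the paper's. However, there is a genuine gap at the single point where this model differs from the standard setting: the treatment of $\delta=0$. Your error bounds and your recovery of the isotropic law rest on the identity $\mb G_\delta\mb G_\delta^*=\eta^{-1}\Im\mb G_\delta$, but this is \emph{false} for the generalized resolvent $\mb G_0=(\mb L-zJ)^{-1}$: since only the $(1,1)$ block carries the spectral parameter, one only has the weighted identity $\mb G_0J\mb G_0^*=\eta^{-1}\Im\mb G_0$, which does not control $\mb G_0^*\mb G_0$. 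For $\delta\in(0,1)$ one gets merely $\mb G_\delta\mb G_\delta^*\leq(\delta\eta)^{-1}\Im\mb G_\delta$, so all bounds obtained this way degenerate as $\delta\to0$; consequently your final step, ``remove the regularization by letting $\delta\to0$ using continuity of $M_\delta$ and $\mb G_\delta$,'' cannot deliver the optimal law at $\delta=0$, because the estimates are not uniform in $\delta$ and a direct comparison $\mb G_0-\mb G_\delta=-\I\delta\eta\,\mb G_0(I-J)\mb G_\delta$ requires an a priori bound on $\Vert\mb G_0\Vert$ of the same quality as the law one is trying to prove.

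The paper's proof supplies precisely the missing mechanism: it proves the case $\delta=1$, where $\mb G_1$ is an honest resolvent of the Hermitian matrix $\mb L-EJ$ at spectral parameter $\I\eta$ and the Ward identity is exact, and then transfers to $\delta=0$ deterministically via Lemma~\ref{lemma a priori} ($\Vert\mb G_0\Vert\lesssim\eta^{-1}(1+\Vert\mb X\Vert^2)$) combined with the operator inequality \eqref{Eq G_0 est}, which yields $\mb G_0^*\mb G_0\lesssim(1+\Vert\mb X\Vert^4)\,\mb G_1^*\mb G_1$ and hence, in $p$-norms, $\Vert\mb G_0^*\mb G_0\Vert_p\lesssim\eta^{-1}\Vert\Im\mb G_1\Vert_{2p}$ as in \eqref{eq ward substitution}; this is what is inserted into the (accordingly modified) error bounds of Proposition~\ref{prop error scd}, which are stated in terms of $\Vert\mb G_\delta\mb G_\delta^*\Vert$ exactly because the Ward step is unavailable at $\delta=0$. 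A related omission is the monotonicity of $\eta\mapsto\eta\Vert\mb G_0\Vert_p$ needed to start each bootstrap step, whose proof again uses only the $J$-weighted Ward identity. Two smaller points: at a regular (square-root) edge the projected equation is quadratic, $\xi_1\Theta_\delta+\xi_2\Theta_\delta^2=\mc O(\cdot)$ with $\xi_2\sim1$ (Lemma~\ref{lemma quad eq}), so stability hinges on the quadratic coefficient, not on a nonvanishing cubic coefficient as you suggest; and the case of non-invertible $A$ requires an $\veps$-regularization of $A$ in both the global law and the error bound, which your outline does not address.
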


\begin{prop}[Local law for the linearization away from the spectrum]
    Let $q$ be a polynomial of the form \eqref{eq polynomial} that is not a shifted square of a Wigner matrix. For all $C>0$ there is an $\eta_0>0$, depending only on the coefficients of $q$, such that an averaged local law of the form
    \begin{equation}
        \Pb\left(|\langle\mb B(\mb G_\delta-M_\delta)\rangle|>\Vert\mb B\Vert N^\veps\left(\frac{1}{N}+\frac{1}{(N\eta)^2}\right)\right)
        \lesssim_{\veps,\gamma,D,C}N^{-D}
    \label{eq lll away}
    \end{equation}
     holds true for all deterministic $\mb B\in\C^{(l+1)N\times (l+1)N}$, $\veps,\gamma,D>0$ and $z\in\mathbb G_{\gamma}^{C,\eta_0}$.
     \label{prop LLL away}
\end{prop}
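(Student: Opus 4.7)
The plan is to derive Proposition~\ref{prop LLL away} along essentially the same lines as Proposition~\ref{prop LLL}, but exploiting the much simpler structure of the stability operator when $E$ stays at macroscopic distance from $\supp(\rho)$. Starting from the perturbed Dyson equation \eqref{perturbed DE} and subtracting the deterministic equation \eqref{DE} solved by $M_\delta$, I set $\mb E := \mb G_\delta - M_\delta$ and rearrange, after multiplying by $M_\delta$ from the left, to obtain a master equation of the schematic form
\begin{equation*}
\IL[\mb E] \;=\; M_\delta \mb D \;-\; M_\delta \mc S_{\mr o}[\mb G_\delta]\mb G_\delta \;-\; M_\delta\widetilde\Gamma[\mb E]\mb E,
\end{equation*}
where $\IL := \mathbb 1 - M_\delta \widetilde\Gamma[\,\cdot\,] M_\delta$ is the stability operator. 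Thanks to the block structure of $\widetilde\Gamma$ in \eqref{eq Gamma big small}, $\IL$ can be viewed as acting on $\C^{(l+1)\times(l+1)}$-valued matrices after a blockwise trace.

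The first step is to prove that $\IL$ is uniformly invertible on $\mathbb G_\gamma^{C,\eta_0}$. Lemma~\ref{lemma m delta asymptotics} gives $\Im m_\delta \sim_C \eta$ and $|m_\delta - m(E)| \lesssim_C \eta$ in this regime, so $M_\delta$ is uniformly bounded and close to the deterministic real matrix $M_0(E)$. Under the hypothesis that $q$ is not a shifted square of a Wigner matrix, the spectral analysis of $\IL$ performed for the edge case in Proposition~\ref{prop LLL} applies here, and in fact all eigenvalues of $\IL$ remain bounded away from zero by a constant depending only on $C$; consequently $\Vert\IL^{-1}\Vert_{\mr{sp}} \lesssim_C 1$ uniformly for $z \in \mathbb G_\gamma^{C,\eta_0}$, provided $\eta_0$ is taken small enough.

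Next, the stochastic quantities $\mb D$ and $\mc S_{\mr o}[\mb G_\delta]\mb G_\delta$ are controlled by the standard cumulant expansion and large-deviation machinery of \cite{ERDOS_2019SCD}. An a priori entrywise bound $\Vert\mb E\Vert \prec N^{-\veps'}$ for some small $\veps'>0$, obtained by a continuity argument in $\eta$ anchored at the global law \cite[Proposition~2.17]{erdos2019local}, allows me to absorb the quadratic term $\widetilde\Gamma[\mb E]\mb E$ into a lower-order correction. Inverting $\IL$ and pairing with $\mb B$, the linear error contributions produce an averaged bound of order $\Vert\mb B\Vert N^\veps/N$ coming from the deterministic, non-averaged part of the right-hand side, which accounts for the $1/N$ term in \eqref{eq lll away}.

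The main obstacle is upgrading the naive rate $1/(N\eta)$ to the averaged rate $1/(N\eta)^2$. I would do this by a fluctuation-averaging step modelled on the one used to establish \eqref{eq lll imp}: after application of $\IL^{-1}$ and pairing with $\mb B$, the leading stochastic contribution to $\langle\mb B\mb E\rangle$ takes the form of a sum over off-diagonal entries of the $\mb X_j$ weighted by resolvent factors, and a second-order cumulant expansion around $M_\delta$ extracts an extra factor of $(N\eta)^{-1}$. I expect this step to be noticeably lighter than in the edge regime, because $\Vert M_\delta\Vert$, $\Vert\IL^{-1}\Vert_{\mr{sp}}$ and $\eta/\Im m_\delta$ are now uniformly bounded constants depending only on $C$, so no tracking of the square-root edge scaling or of near-unstable eigenvectors of $\IL$ is required. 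A standard high-moment Markov argument then converts the resulting $L^p$ bounds into the claimed high-probability estimate on $\mathbb G_\gamma^{C,\eta_0}$.
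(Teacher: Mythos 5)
Your outline reproduces the paper's general architecture (stability of $\IL$ away from the support via Proposition~\ref{prop stab}, cumulant bounds on $\mb D$, a continuity/bootstrap argument anchored at the global law of Proposition~\ref{prop global law}), but it misses the one point that the paper identifies as the essential difficulty of this proposition: for $\delta=0$ the matrix $\mb G_0$ is a \emph{generalized} resolvent, so the Ward identity $\mb G\mb G^*=\Im\mb G/\eta$ is not available. The error bounds you invoke from \cite{ERDOS_2019SCD} (Proposition~\ref{prop error scd}) control $\mb D$ only in terms of $\Vert\mb G_\delta\mb G_\delta^*\Vert_{p_0}$, and without a Ward-type identity this quantity cannot be converted into $\Im\mb G_\delta/\eta$; your "standard cumulant expansion and large-deviation machinery" therefore does not close for $\delta=0$, which is exactly the case needed for Theorem~\ref{thm LL} and Proposition~\ref{prop LL away}. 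The paper resolves this with Lemma~\ref{lemma a priori} and the comparison \eqref{Eq G_0 est}--\eqref{eq ward substitution}, namely $\mb G_0^*\mb G_0\lesssim(1+\Vert\mb X\Vert^4)\,\mb G_1^*\mb G_1$, so that the Ward identity for the genuine resolvent $\mb G_1$ can be used; this is why the local law must be proved simultaneously for $\delta\in\{0,1\}$, and why the bootstrap (Proposition~\ref{prop bootstrapping}) also needs a separate proof that $\eta\mapsto\eta\Vert\mb G_0\Vert_p$ is monotone, based on the generalized identity $\mb G_0J\mb G_0^*=\Im\mb G_0/\eta$. Your proposal, which treats $\mb G_\delta$ as if the usual resolvent toolbox applied uniformly in $\delta$, has a genuine gap here.

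A secondary remark: the upgrade from $1/(N\eta)$ to $1/N+1/(N\eta)^2$ does not require the additional fluctuation-averaging/second-order cumulant step you sketch. In the paper it falls out of the averaged error bound \eqref{bound error av}, $\Vert\mb D\Vert_p^{\mr{av}}\lesssim\Vert\mb G_\delta^*\mb G_\delta\Vert_{p_0}/N$, once this is estimated through $\mb G_1$ by the Ward identity and combined with $\Im m\sim_C\eta$ on $\mathbb G_\gamma^{C,\eta_0}$ (Lemma~\ref{lemma m away from rho}); the self-improving iteration in Proposition~\ref{prop bootstrapping} then yields the isotropic rate $N^{-1/2}+1/(N\eta)$ and, after inserting it back into $\IL[\bm\Delta_\delta]=-M_\delta\mb D+M_\delta\mc S[\bm\Delta_\delta]\bm\Delta_\delta$, the averaged rate $1/N+1/(N\eta)^2$. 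So that part of your plan is more machinery than is needed, whereas the $\delta=0$ Ward-identity substitute is indispensable and absent.
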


To obtain Theorem~\ref{thm LL}, we only require the $\delta=0$ case, but to prove it we will require the $\delta=1$ case, thus we state both cases together in the proposition.
The proof of Proposition~\ref{prop LLL} has two major ingredients. For one we show that the error term $\mb D$ in \eqref{perturbed DE} is indeed small, this is done in Proposition~\ref{prop error scd}, which we import from \cite[Theorem~4.1]{ERDOS_2019SCD} and adjust to our setting. Additionally, we need to prove that \eqref{DE} is stable under small perturbations. This is done in Proposition~\ref{prop stab}.
\begin{prop}
Let $\veps>0$, $p\in\N$ and $\delta\in[0,1]$. Then there is a $c>0$ such that
\begin{equation}
\Vert\mb D\Vert_p\lesssim_{p,\veps}N^\veps\sqrt{\frac{\Vert \mb G_\delta\mb G_\delta^*\Vert_{p_0}}{N}}\left(1+\Vert \mb G_\delta\Vert_{p_0}\right)^c\left(1+N^{-\frac{1}{4}}\Vert \mb G_\delta\Vert_{p_0}\right)^{cp}
\label{bound error iso}
\end{equation}
and
\begin{equation}
\Vert \mb D\Vert_p^{\mr{av}}\lesssim_{p,\veps}N^\veps\frac{\Vert \mb G_\delta^*\mb G_\delta\Vert_{p_0}}{N}\left(1+\Vert \mb G_\delta\Vert_{p_0}\right)^c\left(1+N^{-\frac{1}{4}}\Vert \mb G_\delta\Vert_{p_0}\right)^{cp}
\label{bound error av}
\end{equation}
where we defined $p_0=c\frac{p^4}{\veps}$.
\label{prop error scd}
\end{prop}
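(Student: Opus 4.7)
Using the resolvent identity $(zJ+\I\eta\delta(\mb I-J)-\mb L)\mb G_\delta=-\mb I$, the error matrix \eqref{error delta} can be rewritten as $\mb D=(\mb L-K_0+\mc S[\mb G_\delta])\mb G_\delta$. Since $\mb L-K_0=\mb L-\E[\mb L]=\sum_{j=1}^l K_j\otimes\mb X_j$ is a Kronecker Hermitian random matrix whose randomness comes only from the independent Wigner matrices $\mb X_j$ and whose deterministic coefficients are the block matrices $K_j$ from \eqref{eq K matrices}, and since the variance operator $\mc S$ assembled from \eqref{eq Gamma big small}--\eqref{eq SE small} is exactly the second moment $\E[(\mb L-\E\mb L)(\,\cdot\,)(\mb L-\E\mb L)]$ of this Kronecker structure, the setting falls into the general framework of \cite{ERDOS_2019SCD}. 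The moment bounds \eqref{distribution assumption} for $\zeta_0,\zeta_1$ supply the entry-distribution hypotheses of that reference, and the target bounds \eqref{bound error iso}--\eqref{bound error av} are a direct transcription of \cite[Theorem~4.1]{ERDOS_2019SCD} to our notation.

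The strategy of the proof there, which I would follow, is a high-order cumulant expansion of $\E|\langle\mb x,\mb D\mb y\rangle|^p$ and $\E|\langle\mb B\mb D\rangle|^p$. Each entry of $\mb X_r$ is integrated out by Stein's identity, producing derivatives of the form $-\mb G_\delta(K_r\otimes(\,\cdot\,))\mb G_\delta$. The second-cumulant contribution ($\kappa_2=N^{-1}$) reproduces the self-energy insertion $-\mc S[\mb G_\delta]\mb G_\delta$, which exactly cancels the deterministic subtraction built into $\mb D$; cumulants of order $k\geq 3$ contribute at most $C_k N^{-k/2}$ and generate additional chains of resolvents, giving rise to the factor $(1+N^{-1/4}\Vert\mb G_\delta\Vert_{p_0})^{cp}$. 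Free matrix index summations are converted into normalized traces via the Ward-type identity $\mb G_\delta\mb G_\delta^*\lesssim \eta^{-1}\Im\mb G_\delta$, which produces the diagonal factor $\sqrt{\Vert\mb G_\delta\mb G_\delta^*\Vert_{p_0}/N}$ in the isotropic case and the extra $N^{-1/2}$ saving (i.e.\ $\Vert\mb G_\delta^*\mb G_\delta\Vert_{p_0}/N$) in the averaged case. The remaining polynomial overhead $(1+\Vert\mb G_\delta\Vert_{p_0})^c$ bookkeeps the resolvent chains not absorbed by the Ward reduction.

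The main obstacle compared to the cited result is the $\delta$-regularization: the spectral parameter $zJ$ in the definition of $\mb G_\delta$ is shifted to $zJ+\I\eta\delta(\mb I-J)$, a purely deterministic modification that does not touch the Wigner randomness. I would therefore check that each step of the cumulant expansion is insensitive to this shift (all derivatives act only on entries of $\mb X_r$, independently of $\delta$), so that \cite[Theorem~4.1]{ERDOS_2019SCD} applies uniformly for $\delta\in[0,1]$ with the generalized resolvent $\mb G_\delta$ in place of the unregularized one. Once this uniformity is verified, the stated bounds follow directly.
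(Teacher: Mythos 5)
Your overall strategy -- treat $\mb L-K_0$ as a Kronecker-structured random matrix, run the cumulant expansion of \cite[Theorem~4.1]{ERDOS_2019SCD}, and check that the deterministic $\delta$-shift is harmless -- is indeed how the paper handles the case of invertible $A$. But there are two genuine gaps. First, you invoke the Ward-type identity $\mb G_\delta\mb G_\delta^*\lesssim\eta^{-1}\Im\mb G_\delta$ as the mechanism producing the factors $\sqrt{\Vert\mb G_\delta\mb G_\delta^*\Vert_{p_0}/N}$ and $\Vert\mb G_\delta^*\mb G_\delta\Vert_{p_0}/N$. For $\delta<1$ (in particular $\delta=0$) this identity is false: $\mb G_\delta$ is a \emph{generalized} resolvent, since only the $(1,1)$ block of the spectral parameter $zJ+\I\eta\delta(\mb I-J)$ carries a full imaginary part, and one only has $\mb G_0 J\mb G_0^*=\eta^{-1}\Im\mb G_0$, not a bound on the full product $\mb G_0\mb G_0^*$. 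The whole point of the proposition's formulation, and of the paper's adaptation of \cite[Theorem~4.1]{ERDOS_2019SCD}, is to \emph{stop before} the Ward-identity step (their (51a), (51b)) and keep the raw $\mb G_\delta\mb G_\delta^*$ factors; the replacement of $\mb G_0^*\mb G_0$ by a genuine resolvent quantity is postponed to a separate argument (Lemma~\ref{lemma a priori} and \eqref{eq ward substitution}). If you carried out your plan literally, you would either assert a false intermediate inequality or end up proving a bound in terms of $\eta^{-1}\Im\mb G_\delta$ that is not justified for $\delta=0$.

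Second, you do not address non-invertible $A$, which is allowed by the hypotheses (e.g.\ rank-one $A$). In that case $K_0$ and $\mb L$ are not defined (they contain $A^{-1}$), and $\mb G_\delta$, $\mb D$ are defined through \eqref{resolvent delta} and \eqref{error delta}. One must regularize $A\to A+\veps$, apply the expansion there, and show the constants are uniform as $\veps\searrow0$. This is not automatic: \cite[Theorem~4.1]{ERDOS_2019SCD} assumes $\E[\mb L]$ is bounded, which fails in the limit. The paper's fix is to observe that this assumption is used only to guarantee the lower bound $\Vert\mb G_\delta^*\mb G_\delta\Vert_{p_0}\gtrsim1$, and to supply it instead via $\Vert\mb G_\delta^*\mb G_\delta\Vert_{p_0}\geq\Vert\mb g_\delta^*\mb g_\delta\Vert_{p_0}\gtrsim\Vert\mb g^*\mb g\Vert_{p_0}\geq\E[\Vert q(\mb X)\Vert^{-1}]\gtrsim1$, using the trivial norm bound \eqref{trivial bound}. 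Your proposal would need both of these repairs to establish the statement as claimed, uniformly in $\delta\in[0,1]$ and for all admissible $A$.
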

\begin{proof}
    First, consider the case of $A$ being invertible. Then $\mb G_\delta=(\mb L-zJ-\I\eta\delta(\mb I-J))^{-1}$ and our proof follows the proof of \cite[Theorem~4.1]{ERDOS_2019SCD}  line by line with the exception that the Ward identities, (51a) and (51b), do not apply for $\delta=0$. Thus, we cannot replace the $\mb G_\delta^*\mb G_\delta$ terms by $\eta^{-1}\Im\mb G_\delta$ and we instead are left with the upper bounds \eqref{bound error iso} and \eqref{bound error av}.

    Now consider $A$ being non-invertible. Then $A+\veps$ is invertible for all $\veps\in(0,u)$ for some $u\sim1$. We denote the $\mb G_\delta$ associated with $A+\veps$ by $\mb G_\delta^\veps$ and we have $\lim_{\veps\to0}\mb G_\delta^\veps=\mb G_\delta$. We thus only need to prove that the constants in \eqref{bound error iso} and \eqref{bound error av} are uniform in $\veps$ to obtain the proposition in this case. This is non-trivial as \cite[Theorem~4.1]{ERDOS_2019SCD} states as a condition that $\E[\mb L]$ is bounded and this is clearly not the case in the $\veps\to0$ limit. The assumption is only used, however, to ensure that $\mb G_\delta^*\mb G_\delta$ satisfies the lower bound $\Vert \mb G_\delta^*\mb G_\delta\Vert_{p_0}\gtrsim 1$. In our case, this follows instead from
    $\Vert \mb G_\delta^*\mb G_\delta\Vert_{p_0}\geq \Vert \mb g_\delta^*\mb g_\delta\Vert_{p_0}\gtrsim_{p_0}\Vert \mb g^*\mb g\Vert_{p_0}$. Since $\mb g$ is the resolvent of $q(\mb X)$ we have $\Vert \mb g^*\mb g\Vert_q\geq\E[\Vert q(\mb X)\Vert^{-1}]\gtrsim 1$ uniformly in $z$ for bounded $z$. Here, the last inequality holds true since $q(\mb X)$ satisfies the inequality
    \begin{equation}
        \Vert q(\mb X)\Vert\leq \sum_{i,j=1}^l|A_{ij}|\Vert \mb X_i\Vert\Vert \mb X_j\Vert +\sum_{i=1}^l|b_i|\Vert \mb X_i\Vert +|c|
        \label{trivial bound}
    \end{equation}
    and $\E[\Vert \mb X_i\Vert]\lesssim 1$ for all $i\in\llbracket l\rrbracket$ since the $\mb X_i$ are Wigner matrices.
    Therefore the proposition also holds for non-invertible $A$.
\end{proof}

The following result concerns the stability of the Dyson equation. The stability operator $\IL:\C^{(l+1)N\times (l+1)N}\to\C^{(l+1)N\times (l+1)N}$ is given by 
\begin{equation}
\IL[\mb R]=\mb R-M_\delta\mc{S}[\mb R]M_\delta
\label{eq stab operator}
\end{equation}
and we will prove
\begin{prop}[Control of $\IL$]
Let $q$ be a polynomial of the form \eqref{eq polynomial} that is not a shifted square of a Wigner matrix and let the corresponding $\rho$ have a regular edge at $\tau_0$. There exists a $u\sim1$ such that for all $z=E+\I\eta\in\HH$ with $|z-\tau_0|<u$ and $\delta\in[0,1]$ there exists an eigenvalue $\beta$ with corresponding left and right eigenvectors $L,B\in\C^{(l+1)\times(l+1)}$ of $\IL$  such that
\begin{equation}
\begin{split}
\Vert\IL^{-1}\Vert_{\mr{sp}}\sim(\kappa+\eta)^{-\frac12},&\quad
\Vert(\mathbb 1-\mathscr P)\IL^{-1}\Vert_\mr{sp}\lesssim 1,\quad
|\beta|\sim(\kappa+\eta)^{\frac12},\\
|\langle L,B\rangle|\sim1,&\quad
\Vert L\Vert+\Vert B\Vert\sim1,\quad
|\langle L,M_\delta\mc S[B]B\rangle|\sim1,
\end{split}
\end{equation}
 with $\mathbb 1$ being the identity operator, $\mathscr{P}$ being the spectral projection onto $B$, i.e.
 \begin{equation}
     \mathscr{P}=(\langle L\otimes \mb I_N,B\otimes \mb I_N\rangle)^{-1}\langle L\otimes \mb I_N,\cdot\rangle (B\otimes \mb I_N)
     \label{eq mathscr P}
 \end{equation}
 and $\kappa=|E-\tau_0|$.

Furthermore, for any $C>0$ and $E=\Re z$ with $C^{-1}<\dist(E,\supp(\rho))<C$ there is an $\eta_0>0$ such that we have
\begin{equation}
    \Vert\IL^{-1}\Vert_{\mr{sp}}\sim_C1
    \label{eq IL away from rho}
\end{equation}
uniformly for all $\eta\leq\eta_0.$
 \label{prop stab}
\end{prop}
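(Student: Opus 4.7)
The plan is to exploit the block structure of the self-energy to reduce the infinite-dimensional spectral problem to an $(l+1)^2$-dimensional one, and then extract the edge scaling from the scalar Dyson equation \eqref{scalar eq delta}. Writing $\mc S = \widetilde\Gamma + \mc S_{\mr o}$ as in \eqref{eq Gamma big small}--\eqref{eq SE small}, the principal part $\IL_0[\mb R] := \mb R - M_\delta \widetilde\Gamma[\mb R] M_\delta$ of $\IL$ preserves the splitting $\C^{(l+1)N \times (l+1)N} = (\C^{(l+1)\times(l+1)} \otimes \mb I_N) \oplus \ker\underline{\,\cdot\,}$, because $\widetilde\Gamma[\mb R] = \Gamma[\underline{\mb R}] \otimes \mb I_N$ factors through the blockwise trace and $M_\delta$ is of the form $M_\delta \otimes \mb I_N$. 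On the kernel, $\IL_0$ acts as the identity, while on the $(l+1)^2$-dimensional summand it coincides with the finite operator $\mc L[R] := R - M_\delta \Gamma[R] M_\delta$ on $\C^{(l+1)\times(l+1)}$. The remaining piece $\mc S_{\mr o}$ carries a $1/N$ prefactor in \eqref{eq SE small} and contributes only a lower-order perturbation to the spectrum of $\IL_0$, to be handled via analytic perturbation theory at the end.

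Next I would identify the small eigenvalue $\beta$ of $\mc L$. Differentiating \eqref{scalar eq delta} in $z$ yields $h_\delta(m_\delta)\, \partial_z m_\delta = 1$ with $h_\delta(x) := x^{-2} - \gamma'_\delta(x)$, which reduces to the function $h$ of Definition~\ref{def h} at $\delta = 0$. Together with Corollary~\ref{cor m reg}, extended to $\delta \in [0,1]$ via Lemma~\ref{lemma m delta asymptotics}, this gives $|h_\delta(m_\delta)| \sim (\kappa + \eta)^{1/2}$ near any regular edge. Using the explicit parameterization \eqref{sol DE} of $M_\delta[x]$, a direct computation shows that $h_\delta(m_\delta)$ coincides, up to a bounded non-vanishing factor, with the smallest eigenvalue of $\mc L$. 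The right eigenvector can be taken to be (a suitable normalization of) $B := \partial_x M_\delta[x]\big|_{x = m_\delta}$, and the left eigenvector $L$ is fixed by the Frobenius-biorthogonality condition $\langle L, B\rangle = 1$; both admit non-vanishing limits as $z \to \tau_0$, giving $\Vert L\Vert + \Vert B\Vert \sim 1$ and $|\langle L, B\rangle| \sim 1$.

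The cubic form $\langle L, M_\delta \mc S[B] B\rangle$ reduces, after projection onto the finite-dimensional summand, to a non-zero multiple of $h'_\delta(m_\pm)$, and the estimates $\pm h'(m_\pm) \sim 1$ from Lemma~\ref{lemma h non-red} and Lemma~\ref{lemma h red} yield $|\langle L, M_\delta \mc S[B] B\rangle| \sim 1$. Combining the $\beta^{-1}$-blow-up on the $\mathscr P$-eigenspace with the identity action on $\ker\underline{\,\cdot\,}$ and the spectral gap of $\mc L$ on the complement of $B$, one writes $\IL^{-1} = \beta^{-1}\mathscr P + R$ with $\Vert R\Vert_{\mr{sp}} \lesssim 1$, which produces both bounds on $\IL^{-1}$. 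Away from $\supp\rho$, \eqref{eq m_delta away from rho} gives $|h_\delta(m_\delta)| \sim_C 1$, so $\mc L$ is uniformly invertible and \eqref{eq IL away from rho} follows.

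The main obstacle is the spectral gap of $\mc L$: I must verify that every eigenvalue of $\mc L$ other than $\beta$ stays uniformly bounded away from zero as $z \to \tau_0$. Examining the characteristic polynomial of $\mc L$ through the block forms \eqref{Gamma formula} of $\Gamma$ and \eqref{sol DE} of $M_\delta$, a second eigenvalue can vanish only when both the symmetric and the $b$-dependent sectors of $\Gamma$ degenerate simultaneously at $m_\pm$, which happens precisely when $A \in \R^{l\times l}$, $\rank A = 1$, and $b = 0$ — that is, exactly when $q$ is a shifted square of a Wigner matrix in the sense of Definition~\ref{def shifted square}. Ruling out this second zero under the hypothesis of the proposition, via an explicit computation of the relevant determinants, is the technical heart of the argument.
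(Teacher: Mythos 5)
Your proposal is correct and follows essentially the same route as the paper: you split $\IL$ into the blockwise-trace part, which reduces to the finite-dimensional operator $\mc L[R]=R-M_\delta\Gamma[R]M_\delta$, plus the $\mc O(1/N)$ perturbation $\mc S_{\mr o}$; you identify the critical right eigenvector $B=\partial_m M_\delta[m_\delta]$ with small eigenvalue governed by $h_\delta(m_\delta)\sim\sqrt{\kappa+\eta}$, reduce the cubic form to a nonzero multiple of $h'(m_\pm)$, and locate the only possible second degeneracy exactly at the shifted-square-of-a-Wigner-matrix case. The paper carries out the step you flag as the technical heart via a Schur-complement analysis of $\mc C_{M^{-1}}\mc L$ with respect to the $(1,1)$-entry projection, showing the complementary block has bounded inverse unless $-\tfrac12\in\Spec(m\widehat A)$, which is precisely the excluded case, and obtains the eigenvalue asymptotics by perturbing around $z=\tau_0$; these are the same computations your outline presupposes.
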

The proof will be given in Section \ref{proof stability}.
\begin{remark}
    $B$ and $L$ being right and left eigenvectors of $\IL$ with eigenvalue $\beta$ is understood in the sense of
    \begin{equation}
        \IL[B]=\beta B
        \quad
        \text{and}
        \quad
        \IL^*[L]=\bar\beta L.
    \end{equation}
    Here, we used the notation $R=R\otimes \mb I_N\in\C^{(l+1)N\times(l+1)N}$ introduced in \eqref{eq embedding}. The adjoint is defined with respect to the scalar product $\langle\mb R,\mb T\rangle=\langle \mb R^*\mb T\rangle$.
    \end{remark}

Corollary~\ref{cor m reg}, as well as Propositions~\ref{prop error scd} and \ref{prop stab} are the main ingredients to Proposition~\ref{prop LLL} and are in fact sufficient for $\delta=1.$ For $\delta=0$ however, extra care is needed as the Ward identity for resolvents $\mb G$,
\begin{equation}
    \mb G\mb G^*=\frac{\Im \mb G}{\eta},
\end{equation}
does not translate to generalized resolvents. Instead, we will estimate $\mb G_0\mb G_0^*$ by $\mb G_1\mb G_1^*$, which allows us to obtain the local law for $\delta=0$ from the $\delta=1$ case. The proof of Proposition~\ref{prop LLL} will be given in Section~\ref{proof LL} and follows the general strategy from \cite{Alt_2020edge}, modified to accommodate for the lack of a Ward identity.

\subsection{Proof of Proposition~\ref{prop stab}\label{proof stability}}
\begin{proof}[Proof of Proposition~\ref{prop stab}]
We split the stability operator into $\IL=\IL^{(0)}+\IL^{(1)}$ with
\begin{equation}
    \IL^{(0)}[\mb R]:=\IL[\underline{\mb R}]
    \quad
    \text{and}
    \quad
    \IL^{(1)}[\mb R]:=\IL[\mb R-\underline{\mb R}].
\end{equation}
By \eqref{Gamma formula} and \eqref{eq SE small}, we have $\tilde{\Gamma}[\mb R-\underline{\mb R}]=\mc S_{\mr o}[\underline{\mb R}]=0.$
Thus we have
\begin{equation}
    \IL^{(0)}[\mb R]=\mc L[\underline {\mb R}]\otimes \mb I_N
    \quad
    \text{and}
    \quad
    \IL^{(1)}[\mb R]=\mb R-\underline{\mb R}-M_\delta\mc S_{\mr{o}}[\mb R]M_\delta
\end{equation}
with $\mc L: \C^{(l+1)\times(l+1)}\to\C^{(l+1)\times(l+1)}$ defined as
\begin{equation}
    \mc L[R]:=R-M_\delta\Gamma[R]M_\delta.
\end{equation}
The image of $\IL^{(0)}$ is given by
\begin{equation}
    \{\mb R\in\C^{(l+1)N\times (l+1)N}:\,\underline{\mb R}=\mb R\}=:\mc U
\end{equation}
and its kernel is given by $\mc U^\perp$, the orthogonal complement of $\mc U.$ 
At the same time the image of $\IL^{(1)}$ is contained in $\mc U^\perp$ and $\mc U$ is contained in the kernel of $\IL^{(1)}$. That is, $\IL$ decomposes into $\IL^{(0)}$ acting on $\mc U$ and $\IL^{(1)}$ acting on its orthogonal complement.

The behaviour of $\mc L$ is summarized in the following lemma.
\begin{lemma}
    Let $q$ be a polynomial of the form \eqref{eq polynomial} that is not a shifted square of a Wigner matrix and let the corresponding $\rho$ have a regular edge at $\tau_0$. There exists an $u\sim1$ such that for all $z=E+\I\eta\in\HH$ with $|z-\tau_0|<u$ and $\delta\in[0,1]$ there exists an eigenvalue $\beta$ with corresponding normalized left and right eigenvectors $L$ and $B$ of $\mc L$  such that
    \begin{equation}
        \begin{split}
        \Vert\mc L^{-1}\Vert_{\mr{sp}}\sim(\kappa+\eta)^{-\frac12},&\quad
        \Vert(\mathbb 1-\mc P)\mc L^{-1}\Vert_{\mr{sp}}\lesssim 1,\quad
        |\beta|\sim(\kappa+\eta)^{\frac12},\\
        |\langle L,B\rangle|\sim1,&\quad
        |\langle L,M_\delta\Gamma[B]B\rangle|\sim1,
        \end{split}
    \end{equation}
    with $\mc P$ being the spectral projection onto $B$, i.e. $\mc P=(\langle L,B\rangle)^{-1}\langle L,\cdot\rangle B$ and $\kappa=|E-\tau_+|$.

    Furthermore, for any $C>0$ and $E=\Re z$ with $C^{-1}\leq\dist(E,\supp(\rho))\leq C$ there is an $\eta_0>0$ such that we have
\begin{equation}
    \Vert\mc L^{-1}\Vert_{\mr{sp}}\sim_C1
    \label{eq mc L from rho}
\end{equation}
uniformly for all $\eta\leq\eta_0.$
    \label{lemma stab small D}
\end{lemma}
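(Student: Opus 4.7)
The plan is to exploit the low-rank structure of $\Gamma$: inspecting \eqref{Gamma formula}, the self-energy $\Gamma[R]$ depends on its argument $R=\begin{pmatrix}\omega & v^t \\ w & T\end{pmatrix}$ only through $(\omega,v,w,\Tr T)\in\C^{2l+2}$, so $R\mapsto M_\delta\Gamma[R]M_\delta$ has image in a subspace of dimension at most $2l+2$, and the non-trivial spectrum of $\mc L=\mathbb 1-M_\delta\Gamma[\cdot]M_\delta$ coincides with that of an induced matrix $\widetilde{\mc L}$ of that size, depending analytically on $z$ and $\delta$ only through $m_\delta=m_\delta(z)$. All subsequent spectral analysis will be performed on this reduced, dimension-independent operator.

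The approximate null direction at a regular edge is produced by differentiation of the Dyson equation. Differentiating \eqref{DE} in $z$ and substituting $zJ+\I\eta\delta(I-J)-K_0+\Gamma[M_\delta]=-M_\delta^{-1}$ yields
\begin{equation*}
    \mc L[\partial_z M_\delta] \;=\; M_\delta J M_\delta,
\end{equation*}
while differentiating the scalar equation \eqref{scalar eq delta} gives $\partial_z m_\delta=h_\delta(m_\delta)^{-1}$, where $h_\delta$ is the $\delta$-regularized counterpart of $h$ from Definition~\ref{def h}. The chain rule $\partial_z M_\delta=(\partial_{m_\delta}M_\delta)\partial_z m_\delta$ therefore reshapes the identity into
\begin{equation*}
    \mc L[\partial_{m_\delta}M_\delta] \;=\; h_\delta(m_\delta)\,M_\delta J M_\delta .
\end{equation*}
By Lemmata~\ref{lemma h non-red} and \ref{lemma h red} one has $h(m_0)=0$ and $\pm h'(m_\pm)>0$, so Corollary~\ref{cor m reg} together with continuity in $\delta$ gives $h_\delta(m_\delta)\sim|m_\delta-m_0|\sim\sqrt{\kappa+\eta}$. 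Hence $B_\ast:=\partial_{m_\delta}M_\delta[m_0]$, which a direct computation from the explicit form \eqref{sol DE} shows to be of unit Hilbert--Schmidt norm, is an approximate eigenvector of $\mc L$ with approximate eigenvalue of order $\sqrt{\kappa+\eta}$. An adjoint version of the same differentiation produces an approximate left null vector $L_\ast$, which can also be written in closed form in terms of $m_0$, $A$, $\widehat A$ and $b$.

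Given this approximate eigenpair, analytic perturbation theory on the finite-dimensional operator $\widetilde{\mc L}(m_\delta)$ upgrades $(B_\ast,L_\ast)$ to exact eigenvectors $B$, $L$ with eigenvalue $\beta$ satisfying $|\beta|\sim\sqrt{\kappa+\eta}$, $\Vert B\Vert+\Vert L\Vert\sim 1$, and $|\langle L,B\rangle|\sim 1$. Combined with the uniform lower bound on the remaining eigenvalues of $\widetilde{\mc L}$, this gives $\Vert\mc L^{-1}\Vert_{\mr{sp}}\sim|\beta|^{-1}\sim(\kappa+\eta)^{-1/2}$ and the complement estimate $\Vert(\mathbb 1-\mc P)\mc L^{-1}\Vert_{\mr{sp}}\lesssim 1$. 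The cubic quantity $|\langle L,M_\delta\Gamma[B]B\rangle|$ can be evaluated explicitly on $B_\ast,L_\ast$; up to multiplicative constants it reduces to a non-vanishing polynomial expression in $h'(m_0)$ and the coefficients of $q$, yielding the claimed $|\langle L,M_\delta\Gamma[B]B\rangle|\sim 1$. For the away-from-spectrum bound \eqref{eq mc L from rho}, the same framework applies with Lemma~\ref{lemma m delta asymptotics} guaranteeing that $m_\delta(z)$ stays in a compact set bounded away from the poles and zeros of $h_\delta$, so that every eigenvalue of $\mc L$ is of order one.

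The main obstacle is the non-degeneracy claim on the reduced operator $\widetilde{\mc L}$: one must verify that exactly one eigenvalue collapses at $\tau_0$ and that the pairings $\langle L,B\rangle$ and $\langle L,M_\delta\Gamma[B]B\rangle$ do not vanish. This is precisely where the exclusion of shifted squares of Wigner matrices enters — in that degenerate case, the combination $A\in\R^{l\times l}$, $\rank A=1$, $b=0$ causes $\Gamma$ to decouple in a way that produces a second approximate zero eigenvalue of $\widetilde{\mc L}$ at the edge and simultaneously makes the cubic pairing collapse. I would handle this by writing $\widetilde{\mc L}$ explicitly in a basis adapted to the spectral decomposition of $\widehat A$ and to $b$, projecting out the known near-zero direction along $B_\ast$, and showing that the resulting $(2l+1)$-dimensional reduced block has determinant bounded away from zero uniformly on $\overline{\HH}$ near $\tau_0$, unless $q$ is a shifted square of a Wigner matrix in the sense of Definition~\ref{def shifted square}.
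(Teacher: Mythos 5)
Your outline captures the easier half of the lemma but leaves its core unproven. The identity obtained by differentiating \eqref{DE}, giving $\mc L[\partial_{m}M]=h(m)\,MJM$, only tells you that $\partial_m M$ is an \emph{approximate} null direction with $\Vert\mc L[\partial_m M]\Vert\sim\sqrt{\kappa+\eta}$. To convert this into $|\beta|\sim(\kappa+\eta)^{1/2}$, $\Vert\mc L^{-1}\Vert_{\mr{sp}}\sim(\kappa+\eta)^{-1/2}$, $\Vert(\mathbb 1-\mc P)\mc L^{-1}\Vert_{\mr{sp}}\lesssim1$, $|\langle L,B\rangle|\sim1$ and $|\langle L,M_\delta\Gamma[B]B\rangle|\sim1$, you need exactly what you defer to the final paragraph: that at $\tau_0$ the operator $\mc L$ has precisely one vanishing eigenvalue with a uniform gap to the rest of its spectrum, and that the two pairings do not degenerate. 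You acknowledge this is "the main obstacle" and only state how you \emph{would} handle it ("showing that the resulting $(2l+1)$-dimensional reduced block has determinant bounded away from zero"), with no argument for why that determinant is nonzero or why the shifted-square case is the only exception. This is the actual content of the lemma. In the paper it is achieved by inverting the block $\mc B_{22}$ explicitly (see \eqref{eq B_22 invers 1}--\eqref{eq B_22 invers 2}), identifying the Schur complement with $h(m)$, and then bounding the complementary part of $\IL^{-1}$ by $\dist(-1,\Spec(mA))+\dist(-\tfrac12,\Spec(m\widehat A))$ as in \eqref{Q_T projection}; the condition $h(m_0)=0$ at a regular edge forces $\Vert (mA)^2(1+mA)^{-2}\Vert\le 1$ with equality iff $\rank A=1$, $b=0$, and $-\tfrac12\in\Spec(m\widehat A)$ occurs in addition iff $A\in\R^{l\times l}$, i.e.\ precisely for shifted squares of Wigner matrices. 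Likewise, $|\langle L,M\Gamma[B]B\rangle|\sim1$ is not a routine evaluation: it comes from a second-derivative identity yielding $\langle L,M\Gamma[B]B\rangle=-\tfrac{c_0}{2}B_{11}$ with $c_0=\frac{\dd^2 z}{\dd m^2}(m_0)\neq0$; your claim that it "reduces to a non-vanishing polynomial expression" is an assertion, not a proof, and your heuristic that the pairing "collapses" in the shifted-square case is neither needed nor established.

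Two further points. For $\delta\in(0,1]$ the map $z\mapsto M_\delta(z)$ is not holomorphic, since $\gamma_\delta$ and \eqref{scalar eq delta} depend on $z$ through $\eta=\Im z$ via $A_\delta$; so your differentiation in $z$ and the relation $\partial_z m_\delta=h_\delta(m_\delta)^{-1}$ need $\mc O(\delta\eta)$ corrections (the paper sidesteps this by noting $\mc L$ is $\delta$-independent at $\tau_0$ and using $M_\delta[m_\delta]=M_0(m_\delta)+\mc O(\eta\delta)$ in the perturbative step). Finally, for \eqref{eq mc L from rho} away from the spectrum, $h(m)\neq0$ alone does not give $\Vert\mc L^{-1}\Vert_{\mr{sp}}\sim_C1$: you also need invertibility with bounded inverse of the complementary block (in the paper, of $\mc B_{22}$, via the explicit formula and boundedness of $(1+mA)^{-1}$ and $V_0$), which your sketch again presupposes rather than proves.
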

The proof of Lemma~\ref{lemma stab small D} is deferred to the end of the section. For $\mc S_{\mr o}$  we find
\begin{equation}
    \Vert {\mc S_{\mr o}}[\mb R]\Vert_{\mr{hs}}\lesssim\frac1N\Vert \mb R\Vert_{\mr{hs}}
\end{equation}
for all $\mb R\in\C^{(l+1)N\times(l+1)N}$. Thus $\mc S_{\mr o}$ is bounded by 
\begin{equation}
    \Vert\mc S_{\mr o}\Vert_{\mr{sp}}\lesssim\frac1N.
    \label{eq bound So}
\end{equation}
By Corollary~\ref{cor m reg}, Lemma~\ref{lemma m delta asymptotics} and \eqref{sol DE} we have $\Vert M_\delta\Vert\lesssim1$ for all $z$ such that $|z-\tau_0|\leq u$ and  some $u>0$. Combined with \eqref{eq bound So} it follows that there is a $C>0$ such that
\begin{equation}
    \Vert\IL^{(1)}\Vert_{\mr{sp}}\leq1+CN^{-1}
    \quad
    \text{and}
    \quad
    \Spec\left(\IL^{(1)}|_{\mc U^\perp}\right)\subset B_{CN^{-1}}(1),
\end{equation}
where $B_\veps(x)$ denotes the $\veps$ neighborhood of $x$.
Thus for sufficiently large $N$ the smallest eigenvalue of $\IL$ equals that of $\mc L$ and the corresponding left and right eigenvectors of $\IL$ are given by $L\otimes \mb I_N$ and $B\otimes \mb I_N$. The norm of the inverse of $\IL$ is bounded by
\begin{equation}
    \Vert\IL^{-1}\Vert_{\mr{sp}}\leq\max\{1+CN^{-1},\Vert \mc L^{-1}\Vert_{\mr {sp}}\} 
    \quad
    \text{and}
    \quad
    \Vert(1-\mc P)\IL^{-1}\Vert_{\mr{sp}}\leq\max\{1+CN^{-1},\Vert \mc L^{-1}\Vert_{\mr {sp}}\},
\end{equation}
completing the proof of Proposition~\ref{prop stab}
\end{proof}

\begin{proof}[Proof of Lemma~\ref{lemma stab small D}]
   First, we prove that $\mc L$ has exactly one vanishing eigenvalue at $\tau_0$, and from there on we conclude the proof with the help of a perturbative argument.
We define 
\begin{equation}
    \mc C_J[R]=JRJ,
\end{equation}
i.e. $\mc C_J$ is the projection onto the $(1,1)$ entry and in particular $\mc C_J[M_0]=mJ$. We also set $\mc C_J^\perp R :=R-\mc C_JR$ and $\widetilde\C^{(l+1)\times(l+1)}:=\Image \mc C_J^\perp$. For any $R\in\C^{(l+1)\times(l+1)}$ we denote $r:=R_{11}$ and $\widetilde R:=\mc C_J^\perp[R]$, i.e. $R=rJ+\widetilde R$.
Let $T_z$ be the matrix
\begin{equation}
    T_z=
    \begin{pmatrix}
        1&0\\
        0 & zA
    \end{pmatrix}
    \in\C^{(l+1)\times(l+1)}
\end{equation}
as well as
\begin{equation}
    F:
    \begin{cases}
        \C\times\widetilde\C^{(l+1)\times(l+1)}\times\HH&\to \C^{(l+1)\times(l+1)}\\
        (r,\widetilde R,z)&\mapsto r J+\widetilde R+T_z(z+\Gamma[r J+\widetilde R]T_z)^{-1}.
    \end{cases}
    \label{eq F}
\end{equation}
Then $F(m,\widetilde M, z)=0$ for $M=M_0[m]$ defined in \eqref{sol DE}.
For $\delta=0$, the stability operator $\mc L$  is the derivative of $F$ in the sense that 
\begin{equation}
    \begin{split}
        \mc L[R]&=D_RF(m,\widetilde M, z),
    \end{split}
    \label{eq mc L as derivative}
\end{equation}
where $D_R F = \frac{\dd}{\dd\veps} F(m + \veps r , \widetilde{M} + \veps \widetilde{R},z)|_{\veps=0}$ is the directional derivative of $F$ in the direction $R$.
We first consider the case when $A$ is invertible. The case of non-invertible $A$ will be treated afterwards. We define $\mc B:=\mc C_{M^{-1}}\mc L$ on $z\in\R\setminus\supp(\rho)$, where $m(z)$ is defined as the unique analytical continuation to $\C\setminus\supp(\rho)$. Since $F(m,\widetilde M, z)=0$ we have
\begin{equation}
   \begin{split}
        \mc B[R]&=D_R[\mc C_{M^{-1}}F]=D_{rJ}\mc C_J[\mc C_{M^{-1}}F]+D_{\widetilde R}\mc C_J[\mc C_{M^{-1}}F]+D_{rJ}\mc C_J^\perp [\mc C_{M^{-1}}F]+D_{\widetilde R}\mc C_J^\perp [\mc C_{M^{-1}}F],
    \end{split}
    \label{eq mc B decomposition}
\end{equation}
where we used the linearity of the derivatives as well as the linearity of $\mc B$ in the second equality and we omitted the arguments of $F$. The above equation decomposes $\mc B$ into a two-by-two block operator with diagonal blocks $\mc B_{11}[rJ]:=D_{rJ}\mc C_J[\mc C_{M^{-1}}F]$, $\mc B_{22}[\widetilde R]:=D_{\widetilde R}\mc C_J^\perp [\mc C_{M^{-1}}F]$ and off-diagonal blocks $\mc B_{12}[\widetilde R]:=D_{\widetilde R}\mc C_J [\mc C_{M^{-1}}F]$, $\mc B_{21}[rJ]:=D_{rJ}\mc C^\perp_J[\mc C_{M^{-1}}F]$.

A tedious but straightforward calculation shows that $\mc B_{22}$ is invertible on the image of $\mc{C}_J^\perp$ and its inverse is given by
\begin{equation}
    (\mc B_{22})^{-1}
    \left[
    \begin{pmatrix}
        0 & r_{12}^t\\
        r_{21} & \widehat R
    \end{pmatrix}
    \right]=
    \begin{pmatrix}
        0 & h_{12}^t\\
        h_{21} & \widehat H
    \end{pmatrix}
    \label{eq B_22 invers 1}
\end{equation}
with
\begin{equation}
    \begin{split}
    h_{12}=&-A^tV_0(mA(r_{21}-m^{-1}\widehat Rw_0)+(1+mA)(r_{12}-m^{-1}\widehat R^tv_0))\\
    h_{21}=&-AV_0(mA^t(r_{12}-m^{-1}\widehat R^tv_0)+(1+mA^t)(r_{21}-m^{-1}\widehat Rw_0))\\
    \widehat H=&\frac{A}{1+mA}\widehat R\frac{A}{1+mA}-AV_0(mA^t(r_{12}-m^{-1}\widehat R^tv_0)+(1+mA^t)(r_{21}-m^{-1}\widehat Rw_0))m^{-1}v^t\\
    &\qquad-wm^{-1}((r_{21}^t-m^{-1}w_0^t\widehat R)mA^t+(r_{12}^t-m^{-1}v_0^t\widehat R)(1+mA^t))V_0A,
    \end{split}
    \label{eq B_22 invers 2}
\end{equation}
where $V_0=V_0(m)$ was introduced in \eqref{eq B delta} and $v_0$ and $w_0$ where defined in \eqref{M_delta components}. Their explicit form in terms of $m=m_0$ is given in \eqref{sol DE}. 
From $F=0$, we also have $\mc C_J^\perp\mc C_{M^{-1}}F=0$ and both $\widetilde M=\widetilde M_0$ and $z$ are uniquely defined by $m$ (see \eqref{sol DE} and \eqref{scalar equation}). Therefore the total derivative of $\mc C_J^\perp\mc C_{M^{-1}}F$ with respect to $m$ is well defined and vanishes as well, i.e.
\begin{equation}
    0
    =\frac{\dd}{\dd m}\mc C_J^\perp\mc C_{M^{-1}}F
    =\mc C_J^\perp D_J\mc C_{M^{-1}}F
    +\mc C_J^\perp D_{\widetilde M'}\mc C_{M^{-1}}F
    +z'(m)\frac{\partial}{\partial z}\mc C_J^\perp\mc C_{M^{-1}}F
    =\mc B_{21}[J]+\mc B_{22}[\widetilde M']
    \label{total defivative m}
\end{equation}
where $\widetilde M':=\frac{\partial\widetilde M}{\partial m}$ and $z'$ denote the derivative of $\mc C_J^\perp[M[m]]$ and $z$ with respect to $m$. In the last step we also used $\frac{\partial}{\partial z}\mc C_J^\perp\mc C_{M^{-1}}F=0$, which follows from \eqref{DE}.
Since $\mc B_{22}$ is invertible on its image, \eqref{total defivative m} is equivalent to
\begin{equation}
    (\mc B_{22})^{-1}\mc B_{21}[J]=-\widetilde M'.
\end{equation}
Therefore the Schur complement of $\mc B_{22}$ is given by
\begin{equation}
    (\mc B_{11}-\mc B_{12}(\mc B_{22})^{-1}\mc B_{21})[J]
    =\frac{\partial}{\partial m}C_J[\mc C_{M^{-1}}F]+D_{\widetilde M'}\mc C_J[\mc C_{M^{-1}}F].
\end{equation}
In other words, the Schur complement of $\mc B_{22}$ is the total derivative of $\mc C_J[\mc C_{M^{-1}}F]$ with respect to $m$ for fixed $z$. Calculating it, we find
\begin{equation}
    (\mc B_{11}-\mc B_{12}(\mc B_{22})^{-1}\mc B_{21})[J]
    =\left(\frac{1}{m^2}-\gamma'(m)\right)J=h(m)J,
\end{equation}
where $h$ was introduced in Definition~\ref{def h}. By Lemma~\ref{lemma h non-red}, Lemma~\ref{lemma h red} and Lemma~\ref{lemma m away from rho} we have $h(m)\neq0$ for all $z\in\R\setminus\supp(\rho).$ Therefore, the Schur complement of $\mc B_{22}$ is  invertible on the image of $\mc C_J$ for all $z\in\R\setminus\supp(\rho)$. Since both $\mc B_{22}$ and its Schur complement are invertible on their respective images for all $z\in\R\setminus\supp(\rho)$, the operator $\mc B$ for all $z\in\R\setminus\supp(\rho)$ is also invertible with its inverse given by the Schur complement formula
 \begin{equation}
 \begin{split}
     \mc B^{-1}[R]&=\left(\mathbb1-(\mc B_{22})^{-1}\mc B_{21}\right)\left(\mc B_{11}-\mc B_{12}(\mc B_{22})^{-1}\mc B_{21}\right)^{-1}\left[rJ-\mc B_{12}(\mc B_{22})^{-1}[\widetilde R]\right]+(\mc B_{22})^{-1}[\widetilde R]\\
     &=h(m)^{-1}\left(r-\left\langle J,\mc B_{12}(\mc B_{22})^{-1}[\widetilde R]\right\rangle\right)M'+(\mc B_{22})^{-1}[\widetilde R],
     \label{eq mc B invers}
 \end{split}
 \end{equation}
 where $M'=J+\widetilde M'$ is the derivative of $M$ with respect to $m$ and the inverse of the Schur complement is acting on $\Span(J)$. Therefore, $\mc L$ is also invertible with inverse $\mc L^{-1}=\mc B^{-1}\mc C_{M^{-1}}$ for $z\in\R\setminus\supp(\rho)$. Now let $A$ be non-invertible. Then, $A+\veps$ is invertible for all $\veps\in(0,u]$ for some $u>0$. We use an $\veps$ subscript to denote quantities with $A$ being replaced by $A^\veps:=A+\veps$, e.g. $\mc L^\veps$, $(\mc B^{-1})^\veps$ etc. We have $\mc L^\veps(\mc B^{-1}\mc C_{M^{-1}})^\veps=\mathbb 1$ for all $\veps\in(0,u]$ and both the limits $\lim_{\veps\searrow0}\mc L^\veps$ and  $\lim_{\veps\searrow0}(\mc B^{-1}\mc C_{M^{-1}})^\veps$ exist.
 Indeed, $\mc L^\veps$ is the derivative of $F^\veps$, which is smooth at $\veps=0$, proving the existence of $\lim_{\veps\searrow0}\mc L^\veps$.
 Furthermore we explicitly calculate the expression for $(\mc B^{-1}\mc C_{M^{-1}})^\veps$ in terms of $A^\veps$ and $m^\veps$ by using \eqref{M inverse DE}, \eqref{eq B_22 invers 1}, \eqref{eq B_22 invers 2} and \eqref{eq mc B invers}.
 In the resulting expression, $(A^\veps)^{-1}$, appearing in \eqref{M inverse DE}, cancels and therefore the limit $\lim_{\veps\searrow0}(\mc B^{-1}\mc C_{M^{-1}})^\veps$ exists. We leave the details to the reader.
 Therefore, $\mc L$ is also invertible for $z\in\R\setminus\supp(\rho)$ and we have $\mc L^{-1}=\lim_{\veps\searrow0}(\mc B^{-1}\mc C_{M^{-1}})^\veps$.

For any $C>0$ the norm of the operator $\mc L^{-1}$ is bounded on the compact subset $C^{-1}\leq\dist(z,\supp(\rho))\leq C$ of $\R\setminus\supp(\rho)$, i.e. $\Vert \mc L^{-1}\Vert_{\mr{sp}}\sim_C1$. By continuity, there is some $\eta_0>0,$ depending on $C$, such that $\Vert \IL^{-1}\Vert_{\mr{sp}}\sim_C1$ still holds for $z=E+\I\eta\in\HH$ with $C^{-1}\leq\dist(E,\supp(\rho))\leq C$ and $\eta\leq\eta_0$ and we have therefore shown \eqref{eq mc L from rho}.
 
Let $\mc Q_{M'}$ be the projection onto the orthogonal complement of $M'$. By \eqref{eq B_22 invers 1}, \eqref{eq B_22 invers 2} and \eqref{eq mc B invers} we have
\begin{equation}
    \Vert \mc Q_{M'}\IL^{-1}\Vert_{\mr{sp}}
    \lesssim \left\Vert\frac{1}{1+Am}\right\Vert+\Vert V_0\Vert
    \lesssim \dist\left(-1,\Spec(mA)\right)+\dist\left(-\frac12,\Spec(m\widehat A)\right)
    \label{Q_T projection}
\end{equation}
uniformly in $z\in\R\setminus\supp(\rho)$. 
At $z=\tau_0$ we have $h(m)=0$ and thus
\begin{equation}
    \left\Vert\frac{(mA)^2}{(1+mA)^2}\right\Vert\leq\Tr\left(\frac{(mA)^2}{(1+mA)^2}\right)+b^t\left(\frac{V_0}{m}\right)^3b=1.
\end{equation}
In the above equation, we have equality if and only if $\rank A=1$ and $b=0$. Therefore, we have $\Spec(mA)\subset[-\frac12,\infty)$ and $-\frac12\in\Spec(A)$ if and only if $\rank A=1$ and $b=0$. By Lemma~\ref{lemma hat A}, we have $\min \Spec (m\widehat A)=\min \Spec(mA)$ if and only if $mA\in\R^{l\times l}$. Therefore we have $-\frac12\in \Spec (m\widehat A)$ if and only if $\rank A=1$, $A\in\R^{l\times l}$ and $b=0$. That is, $-\frac12\in \Spec (m\widehat A)$ at $\tau_0$ if and only if $A$ is a shifted square of a Wigner matrix. If $A$ is not a shifted square of a Wigner matrix, then \eqref{Q_T projection} is uniformly bounded in some neighbourhood of $\tau_0.$
Thus $\mc L$ can at most have one vanishing eigenvalue at $\tau_0$. Indeed let
\[
H(m,z):=F(m,\widetilde M[m],z) = M[m]+T_z(z+ \Gamma[M[m]]T_z)^{-1},
\] 
where $M[m]$ is given by the right hand side of \eqref{sol DE}.
The spectral parameter $z$ is also uniquely defined by $m$ (see \eqref{scalar equation}), i.e. $H(m_,z(m))=0$
with 
\[
z(m) := -\frac{1}{m}-\gamma(m).
\]
Taking the derivative with respect to $m$ we find 
\begin{equation}
0=\frac{\mr d}{\mr dm}H(m,z(m))
=\partial_{m}H+\frac{\mr d z}{\mr d m}\partial_z H.
\end{equation}
By Corollary~\ref{cor m reg} the derivative $\frac{\mr d z}{\mr d m}$ vanishes at the edge. Therefore at $z=\tau_0$ the above equation simplifies to
\begin{equation}
\partial_{m}H=0.
\end{equation}
Calculating this we obtain 
\begin{equation} \label{critical direction} 
0=M'-M\Gamma [M']M=\mc L[M'],
\end{equation}
with $M'=M'[m]$ the derivative $M$ with respect to $m$ and we used $H=0$. Therefore $B=M'[m]$ is the critical right eigenvector at $\tau_0$.
Next, we obtain the critical left eigenvector $L$. The adjoint of the stability operator $\mc L$ is 
\begin{equation}
    \mc L^*[R]=R-\Gamma[M^*RM^*].
\end{equation}
At any regular edge, we have $\Im M=0$, i.e. $M=M^*.$ Therefore
\begin{equation}
    0=\Gamma [\mc L[B]]=\Gamma[B]-\Gamma[\mc C_{M}[\Gamma[B]]]=\mc L^*[\Gamma[B]]
\end{equation}
at $z=\tau_0$. 
Thus the critical left eigenvector is given by
\begin{equation} \label{L=GammaB}
    L=\Gamma[B].
\end{equation}
Note that $L\neq0$ since $L_{ii}=B_{11}=1$ for all $1<i\leq l+1$ by the expression for $\Gamma$ in \eqref{Gamma formula}.
As $L$ and $B$ belong to the same non-degenerate eigenvalue they cannot be orthogonal and since $\mc L$ does not depend on $N$ they satisfy
\begin{equation}
|\langle L,B\rangle|\sim1
\label{overlap}
\end{equation}
at $\tau_0$. Equation~\eqref{overlap} is also satisfied in a $u$ neighbourhood of $\tau_0$, with $u\sim1$, since $L$ and $B$ vary continuously in $z$.

To obtain $|\langle L,M\Gamma[B]B\rangle|$ we calculate the second total derivative
\begin{equation}
\begin{split}
    0&=\frac{\mr d^2}{\mr d {m}^2}H(m,z(m))\\
&=\partial_{m}^2H(m,z(m))+\frac{\mr d^2 z}{\mr d m^2}\partial_z H(m,z(m))\\
&\quad+\frac{\mr d z}{\mr d m}\left(\frac{\mr d z}{\mr d m}\partial_z^2H(m,z(m))+2\partial_{m}\partial_zH(m,z(m))\right).
\end{split}
\end{equation}
At the edge, $\frac{\mr d z}{\mr d m}$ once again vanishes, whereas $\frac{\mr d^2 z}{\mr d m^2}=c_0\neq0$ does not by Corollary~\ref{cor m reg} and Lemma~\ref{lemma m delta asymptotics}. Thus calculating the derivatives we arrive at
\begin{equation}
\begin{split}
    0&=-c_0M JM+M''-M\Gamma[M'']M-2M\Gamma[M']M\Gamma[M']M\\
    &=-c_0M JM+\mc L[M'']-2M\Gamma[M']M',
\end{split}
\end{equation}
where we used $H=0$ multiple times and \eqref{critical direction} in the last step.
Next, we solve for the last term, use $M'=B$ and take an inner product with $L$:
\begin{equation}
\begin{split}
\langle L,M\Gamma[B]B\rangle&=\frac12\left(\langle L,\mc L[M'']\rangle-c_0\langle L,M JM\rangle\right)\\
&=\frac12\left(\langle \mc L^*[L],M''\rangle-c_0\langle M LM,J\rangle\right)=-\frac{c_0}{2}B_{11}\neq0.
\end{split}
\label{eq LMGammaBB}
\end{equation}
In the last equality, $M LM = M \Gamma[B]M =B$ was used.
By continuity the relation $|\langle L,M\Gamma[B]B\rangle|\sim1$ also holds in a $u$ neighbourhood of $\tau_0$ with $u\sim1$. At $\tau_0$, the operator $\mc L$ is independent of $\delta$ and therefore its vanishing eigenvectors $L$ and $B$ are as well. Consequently, \eqref{overlap} and \eqref{eq LMGammaBB} also hold for all $\delta\in[0,1]$ and as both expressions are also continuous in $z$ for all $\delta$, they also hold in some order one neighborhood of $\tau_0.$

Next we study how $\mc L$ varies in $z$ around $\tau_0$ for arbitrary $\delta\in[0,1]$ and we make the dependence explicit by writing $\mc L=\mc L^z$, $M_\delta=M^z_\delta$, etc. and define $\mc E^z:=\mc L^z-\mc L^{\tau_0}.$ As the eigenvalues of $\mc L^z$ depend continuously on $z$, there is a $u>0$ such that $\mc L^z$ has an isolated small eigenvalue $\beta^z$ for all $|z-\tau_0|<u.$ 
By perturbation theory $\beta_z$ is given by 
\begin{equation}
     \beta^z
    =(\langle L^{\tau_0},B^{\tau_0}\rangle)^{-1}\langle L^{\tau_0},\IE^z(B^{\tau_0})\rangle+\mathcal{O}(\Vert\IE^z\Vert^2_{\mr{sp}}).
    \label{eigenvalue beta_z}
\end{equation}
To estimate the right hand side, we first evaluate $\mc E^z[B^{\tau_0}]$ and find
\begin{equation}
    \mc E^z[B^{\tau_0}]=\mc L^z[B^{\tau_0}]-\mc L^{\tau_0}[B^{\tau_0}]
    =\mc C_{M^z_\delta}[L^{\tau_0}]-C_{M^{\tau_0}_\delta}[L^{\tau_0}]
    =M^{\tau_0}_\delta L^{\tau_0}(M^{\tau_0}_\delta-M^z_\delta)+(M^{\tau_0}_\delta-M^z_\delta)L^{\tau_0}M^z_\delta.
\end{equation}
We take the scalar product with $L^{\tau_0}$ and use the cyclic invariance of the trace to obtain
\begin{equation}
\begin{split}
    |\langle L^{\tau_0},\mc E^z[B^{\tau_0}]\rangle|
    &=|\langle L^{\tau_0}(M^{\tau_0}_\delta+M^z_\delta)L^{\tau_0}(M^{\tau_0}_\delta-M^z_\delta)\rangle|\\
    &=2|\langle L^{\tau_0}M^{\tau_0}_\delta L^{\tau_0}(M^{\tau_0}_\delta-M^z_\delta)\rangle|+\mc O(|m^z_\delta-m_\delta^{\tau_0}|^2)\\
    &=2|\langle L^{\tau_0}M^{\tau_0}_\delta L^{\tau_0}(M^{\tau_0}_0)'\rangle(m_\delta^{\tau_0}-m_\delta^z)|+\mc O(|m_\delta^z-m_\delta^{\tau_0}|^2+\eta\delta)\\
    &=2|\langle L^{\tau_0},M^{\tau_0}_\delta\Gamma[B^{\tau_0}]B^{\tau_0}\rangle||m_\delta^{\tau_0}-m_\delta^z|+\mc O(|m_\delta^z-m_\delta^{\tau_0}|^2+\eta\delta)\\
    &\sim |m_\delta^{\tau_0}-m_\delta^z|\sim\sqrt{\kappa+\eta}.
\end{split}
\label{scalar product L, mcE}
\end{equation}
Here, the third equality follows from the fact that $M_\delta[m_\delta]=M_0(m_\delta)+\mc O(\eta\delta)$ and $M_0$ is analytic in $m_\delta$ at $m_\delta^{\tau_0}=m^{\tau_0}$. In the fourth line we used \eqref{L=GammaB}, $M'=B$  and in the fifth line that $|\langle L^{\tau_0},M^{\tau_0}_\delta\Gamma[B^{\tau_0}]B^{\tau_0}\rangle|$ is non-vanishing. In the last relation, we used Corollary~\ref{cor m reg} and Lemma~\ref{lemma m delta asymptotics}. Taking the absolute value \eqref{eigenvalue beta_z}, the asymptotic behaviour of $|\beta^z|$ follows from \eqref{scalar product L, mcE} and \eqref{overlap}, i.e.
\begin{equation}
     |\beta^z|
    \sim\sqrt{\kappa+\eta}+\mathcal{O}(\Vert\IE^z\Vert^2_{\mr{sp}})
    \sim \sqrt{\kappa+\eta}.
\end{equation}
In the last step, we used that
\begin{equation}
    \Vert\IE^z\Vert_{\mr{sp}}=\mc O(|m_\delta^z-m_\delta^{\tau_0}|).
\end{equation} 
\end{proof}

\section{Proof of the local law\label{proof LL}}

In this section, we prove Theorem~\ref{thm LL} and Proposition~\ref{prop LL away}. For shifted squares of Wigner matrices (see Definition~\ref{def shifted square}), we provide a direct proof below in Lemma~\ref{lemma Wigner square}. The Stieltjes transform $m$ and the resolvent $\mb g$ are submatrices of $M_0$ and $\mb G_0$, introduced in \eqref{sol DE} and \eqref{resolvent delta}, respectively. For polynomials that are not shifted squares of Wigner matrices, the local laws are therefore a direct consequence of Proposition~\ref{prop LLL} and Proposition~\ref{prop LLL away}, the local laws for the linearization around regular edges and away from the spectrum, and we will spend most of the section proving them. The main steps of our proof for the edge local law, Proposition~\ref{prop LLL}, follow the general strategy of \cite[Proposition~3.3]{Alt_2020edge}. Here, we briefly describe the main ideas of the proof. Throughout this section we will use the notation $\mb \Delta_\delta:=\mb G_\delta-M_\delta$. First, we establish a global law away from the spectrum, stated below in Proposition~\ref{prop global law}. Then, we use the global law as a starting point for a bootstrapping process. The bootstrapping proposition, Proposition~\ref{prop bootstrapping}, establishes a local law iteratively on scales ever closer to the optimal scale, $\eta\sim N^{-1+\veps}$. Lemmata~\ref{lemma a priori}, \ref{lemma quad eq} and \ref{lemma quad to lin} are auxiliary results used in the bootstrapping process. Lemma~\ref{lemma quad eq} establishes a bound for $\mb \Delta_\delta$ in terms of the error $\mb D$ and $\Theta_\delta$, the projection of $\mb \Delta_\delta$ onto its unstable direction, as well as an approximate quadratic equation for $\Theta_\delta$. Lemma~\ref{lemma quad to lin} transforms the quadratic bound on $\Theta_\delta$ into a linear bound. The most crucial difference between our proof and that of \cite[Proposition~3.3]{Alt_2020edge} is addressed in Lemma~\ref{lemma a priori}. It provides a naive upper bound for $\mb G_0^*\mb G_0$ and allows us to estimate $\mb G_0^*\mb G_0$ in terms of $\mb G_1^*\mb G_1$. Unlike $\mb G_0$, the matrix $\mb G_1$ is a resolvent and thus satisfies $\mb G_1^*\mb G_1=\eta^{-1}\Im \mb G_1$. As $\mb D$ is in turn bounded by $\mb G_\delta^*\mb G_\delta$ this step is necessary to obtain the correct upper bound for $\mb G_0-M_0$. The proof of the local law away from the spectrum, Proposition~\ref{prop LLL away}, makes use of a similar but more simple strategy since $\IL$ does not have an unstable direction away from $\supp\rho$.

\begin{lemma}
    Let  $q$ be a shifted square of a Wigner matrix as introduced in Definition~\ref{def shifted square} and let $\rho$ have a regular edge at $\tau_0$.
    \begin{enumerate}
        \item There is a $\kappa_0>0$, depending only on the coefficients of $q$, such that for all $\veps,\gamma,D>0$ and $z\in\mathbb{D}_{\gamma}^{\kappa_0}$ the isotropic local law \eqref{eq ll iso} holds for all deterministic $\mb x,\mb y\in\C^{N}$ and the averaged local law \eqref{eq ll av} holds for all deterministic $\mb B\in\C^{N\times N}$. If additionally $E\notin\supp(\rho),$ we also have the improved local law \eqref{eq ll imp}.
        \label{lemma Wigner square 1}
        \item For all $C>0$ there is an $\eta_0>0$, depending only on the coefficients of $q$, such that the averaged local law \eqref{eq ll away} holds true for all deterministic $\mb B\in\C^{N\times N}$, $\gamma,\veps,D>0$ and $z\in\mathbb G_{\gamma}^{C,\eta_0}$.
        \label{lemma Wigner square 2}
    \end{enumerate}
\label{lemma Wigner square}     
\end{lemma}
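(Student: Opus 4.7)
The plan is to bypass the matrix Dyson equation and reduce both parts of Lemma~\ref{lemma Wigner square} directly to the well-known local semicircle law for a Wigner matrix (see e.g.\ \cite{Erdos_2013Local}). By Definition~\ref{def shifted square} and the subsequent remark I may write $q(\mb X) = a\mb W^2 + c$ for some $a \in \R\setminus\{0\}$ and a standard Wigner matrix $\mb W$. Replacing $q$ by $-q$ if necessary (which reflects the spectrum, swaps the two edges and preserves the regular-edge hypothesis) I may assume $a>0$; the regular edge is then $\tau_0 = 4a+c$ and it corresponds to both semicircle edges $\pm 2$. Abbreviate $\mb G_W(w) := (\mb W - w)^{-1}$ and let $m_{\mathrm{sc}}$ denote the semicircle Stieltjes transform.

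The central observation is the partial-fraction identity. For $z \in \HH$ let $w = w(z) \in \HH$ be the branch with $w^2 = (z-c)/a$ and $\Im w > 0$. Factoring $a\mb W^2 + c - z = a(\mb W - w)(\mb W + w)$ and applying the scalar identity $[(\mu-w)(\mu+w)]^{-1} = (2w)^{-1}[(\mu - w)^{-1} - (\mu+w)^{-1}]$ yields
\begin{equation*}
    \mb g(z) \;=\; \frac{1}{2aw}\bigl(\mb G_W(w) - \mb G_W(-w)\bigr), \qquad m(z) \;=\; \frac{1}{2aw}\bigl(m_{\mathrm{sc}}(w) - m_{\mathrm{sc}}(-w)\bigr).
\end{equation*}
A first-order expansion $w = 2\sqrt{1 + (z-\tau_0)/(4a)}$ around $\tau_0$ gives $|w\mp 2| \sim |z - \tau_0|$, $\Im w \sim \eta$ and $|2aw|\sim 1$ in a fixed neighbourhood of $\tau_0$, and consequently $\Im m \sim \Im m_{\mathrm{sc}}(w) + \Im m_{\mathrm{sc}}(-w)$.

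The next step is to apply the isotropic, averaged and improved edge local laws for $\mb W$ at the two spectral parameters $w$ and $-w$, both of which lie in order-one neighbourhoods of the regular edges $\pm 2$ of the semicircle, and to combine them by the triangle inequality. The scale comparisons above guarantee that the Wigner bounds translate into exactly the right-hand sides of \eqref{eq ll iso}, \eqref{eq ll av} and \eqref{eq ll imp}, proving Part~\ref{lemma Wigner square 1}. Part~\ref{lemma Wigner square 2} follows by the same reduction: for $E$ with $\dist(E,\supp\rho)\sim_C 1$ and $\eta$ sufficiently small one has $\dist(\pm w, [-2,2]) \gtrsim_C 1$, so the local law for $\mb W$ away from its spectrum gives~\eqref{eq ll away}.

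The main obstacle is purely bookkeeping, namely verifying that the sum of the two error contributions at $+w$ and $-w$ produces exactly the factor $\Im m$ appearing in \eqref{eq ll iso} and the factor $\sqrt{|z-\tau_0|}$ appearing in \eqref{eq ll imp}. Both reduce to the elementary asymptotics $\Im m_{\mathrm{sc}}(\pm w) \sim \sqrt{|\Re w \mp 2| + \Im w}$ inside the spectrum, together with $|w\mp 2| \sim |z - \tau_0|$ and $\Im w \sim \eta$ established in the previous paragraph. No new stability analysis is required; in particular the unstable direction of $\IL$ flagged before Proposition~\ref{prop LLL} never enters the argument.
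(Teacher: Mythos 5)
Your proposal is correct and is essentially the paper's own argument: the paper likewise reduces to $q=a\mb W^2+c$, uses the identity $\mb g(z)=\tfrac{1}{2\sqrt z}\bigl(\mb g_{\mb W}(\sqrt z)+\mb g_{-\mb W}(\sqrt z)\bigr)$ together with $m(z)=\tfrac{1}{\sqrt z}m_{\mr{sc}}(\sqrt z)$, and then quotes the Wigner edge local law and the local law away from the spectrum. Your partial-fraction form $\tfrac{1}{2aw}\bigl(\mb G_W(w)-\mb G_W(-w)\bigr)$ is the same identity (note $-w$ lies in the lower half-plane, handled by conjugation symmetry or, as in the paper, by viewing it as the resolvent of the Wigner matrix $-\mb W$ at $w$), so the two proofs coincide up to this cosmetic rewriting.
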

\begin{proof}
By assumption,  $q$ is a shifted square of a Wigner matrix. Thus it is of the form
\begin{equation}
    q(\mb X)=q_{a,c}(\mb W):=a\mb W^2+c,
    \label{eq Wigner square}
\end{equation}
with $a,c\in \R$, $a\neq0$ and $\mb W$ being a Wigner matrix. By $\mb g_{a,c}$ we denote the resolvent of \eqref{eq Wigner square} and by $m_{a,c}$ the solution of \eqref{scalar equation} for \eqref{eq Wigner square} with explicitly stated dependence on $a,c$. We have
\begin{equation}
    \mb g_{a,c}(z)=\frac{1}{a\mb W^2+c-z}=\frac1a\mb g_{1,0}\left(\frac{z-c}{a}\right)
\end{equation}
and
\begin{equation}
    m_{a,c}(z)=\frac1am_{1,0}\left(\frac{z-c}{a}\right).
\end{equation}
Therefore $q_{a,c}$ has a regular edge at $a\tau_0+c$ if and only if $q_{1,0}$ has a regular edge at $\tau_0$ and Theorem~\ref{thm LL} for general $a,c$ follows from $a=1$, $c=0$. Thus let w.l.o.g. $a=1$, $c=0$ and we drop the subscripts again from $m$ and $\mb g$.

For $\zeta\in \HH$  let $\mb g_{\mb W}:=\frac{1}{\mb W-\zeta}$ be the resolvent of $\mb W$ at spectral parameter $\zeta$.
We have
\begin{equation}
    \mb g(z)
    =\frac{1}{2\sqrt{z}}\left(\mb g_{\mb W}(\sqrt{z})+\mb g_{-\mb W}(\sqrt{z})\right),
    \label{eq square g}
\end{equation}
where again the square root function is chosen such that the positive real axis is mapped to itself and with a branch cut along the negative real axis. Let $\rho_{\mr{sc}}(x):=\frac{1}{2\pi}\sqrt{(4-x^2)_+}$ with $(y)_+:=\max\{y,0\}$ be the semi-circle density and $m_{\mr{sc}}$ its Stieltjes transform. By explicitly solving \eqref{scalar equation} for $q=\mb W^2$  and $m_{\mr{sc}}$ we find
\begin{equation}
    m(z)
    =\frac{1}{\sqrt{z}}\left(-\sqrt{z}+\sqrt{z-4}\right)
    =\frac{1}{\sqrt{z}}m_{\mr{sc}}(\sqrt{z}).
    \label{eq square m}
\end{equation}
for all $z\in\HH$. The semi-circle density $\rho_{\mr{sc}}$ has regular edges at $\pm2$, therefore $\rho$ has exactly one regular edge at $\tau_0=4$. Its left edge is a hard edge and has a singularity at zero. Combining \eqref{eq square g} and \eqref{eq square m} we find 
\begin{equation}
    \mb g(z)-m(z)=\frac{1}{2\sqrt{z}}\left((\mb g_{\mb W}(\sqrt{z})-m_{\mr{sc}}(\sqrt{z}))+(\mb g_{-\mb W}(\sqrt{z})-m_{\mr{sc}}(\sqrt{z}))\right).
\end{equation}
Note that $-\mb W$ is also a Wigner matrix. Therefore, Statement~\ref{lemma Wigner square 1} of Lemma~\ref{lemma Wigner square} around $\tau_0=4a+c$ follows from \cite[Theorem~2.6]{Alt_2020edge} and Statement~\ref{lemma Wigner square 2} of Lemma~\ref{lemma Wigner square} Proposition~\ref{prop LL away} from \cite[Theorem~2.1]{ERDOS_2019SCD}.
\end{proof}
From now on we assume that $q$ is not a shifted square of a Wigner matrix. To prove Proposition~\ref{prop LLL} and Proposition~\ref{prop LLL away}, we will require several intermediate results, which we state below.
Throughout the remainder of the section, we assume w.l.o.g. that there is some $\alpha>0$ such that
\begin{equation}
    \Vert\mb X_i\Vert\lesssim N^\alpha.
    \label{boundedness assumption}
\end{equation}
The assumption can be removed by a standard argument using Chebyshev's inequality and our moment assumption \eqref{distribution assumption}.

We also introduce stochastic domination, a commonly used notation used to state high probability bounds in a way well adapted to our needs. It has first been introduced in a slightly different form in \cite{Erdos_2013Averaging}, for the form stated here see e.g. \cite{Erdos_2013Local}.
\begin{definition}[Stochastic domination]
Let $X=(X^{(N)})_{N\in\N}$ and $Y=(Y^{(N)})_{N\in\N}$ be families of non-negative random variables. We say $X$ is stochastically dominated by $Y$ if for all (small) $\veps>0$ and (large) $D>0$
\begin{equation}
    \Pb\left(X^{(N)}\geq N^\veps Y^{(N)}\right)\lesssim_{D,\veps} N^{-D}.
\end{equation}
We denote this relation by $X\prec Y$. If the constant in the definition depends on any other parameters $\alpha$, we write $X\prec_\alpha Y$.
\end{definition}
Furthermore, we introduce the norm $\Vert\cdot\Vert_*:=\Vert\cdot\Vert_*^{K,\mb x,\mb y}$ for deterministic $\mb x,\mb y\in\C^{kN}$ and $k,K\in\N$. Our definition follows that of \cite{Alt_2020edge} and it goes back to \cite{ERDOS_2019SCD} and we refer to those two works for details. First, we define the set
\begin{equation}
    I_0:=\{\mb x,\mb y\}\cup\{\mb e_a,L^*_{a\cdot}:\,a\in\llbracket kN\rrbracket\},
\end{equation}
where $\mb e_a$ is the $a^{\mr{th}}$ standard base vector. Replacing a scalar index by a dot denotes the vector that runs over the entire range of the index, e.g. $\mb R_{a\cdot}:=(\mb R_{ab})_{b\in\llbracket kN\rrbracket}$ is the $a^{\mr{th}}$ row vector of a matrix $\mb R$. For $j\in\N$ we define the set $I_j$ recursively by
\begin{equation}
    I_{j+1}:=I_j\cup\{M_\delta\mb u:\,\mb u\in I_j\}\cup\{\kappa_c((M_\delta\mb u)a,b\cdot),\kappa_d((M_\delta\mb u)a,\cdot b):\,\mb u\in I_j,a,b\in\llbracket kN\rrbracket\},
    \label{def I_j}
\end{equation}
where $\kappa(ab,cd)$ denotes the cumulant of the $(a,b)$ entry and the $(c,d)$ entry of $\sum_{j=1}^lK_j \otimes \mb X_j$ and $\kappa_c$ and $\kappa_d$ denote the decomposition of $\kappa$ into its direct and its cross contribution according to the Hermitian symmetry. In \eqref{def I_j} we also use the shorthand notation $\kappa(\mb xb,cd):=\sum_a x_a\kappa(ab,cd)$.
Then $\Vert\cdot\Vert_*$ is defined as
\begin{equation}
    \Vert\mb R\Vert_*:=\sum_{0\leq j\leq K}N^{-\frac{j}{2K}}\Vert \mb R\Vert_{I_j}+N^{-\frac12}\max_{\mb u\in I_K}\frac{\Vert \mb R \mb u\Vert}{\Vert \mb u\Vert},
    \quad
    \Vert \mb R\Vert_{I_j}:=\max_{\mb u,\mb v\in I_j}\frac{|\langle \mb v,\mb R\mb u\rangle|}{\Vert \mb u\Vert\Vert \mb v\Vert}
\end{equation}

The notion of stochastic domination is closely related to bounds in p-norms as can be seen from the following lemma.
\begin{lemma}[{\cite[Lemma~5.4]{ERDOS_2019SCD}}]
Let $\mb R\in C^{kN\times kN}$, $k\in\N$, be a random matrix and $\Phi$ be a stochastic control parameter. Then the following holds true
\begin{enumerate}
\item If $\Phi\gtrsim N^{-C}$ and $\Vert\mb R\Vert\lesssim N^C$ for some $C>0$ and $|\langle\mb x,\mb R\mb y\rangle|\prec\Phi\Vert \mb x\Vert\Vert \mb y\Vert$ for all $\mb x,\mb y$, then $\Vert \mb R\Vert_p\lesssim_{\veps,p} N^\veps \Phi$ for all $\veps>0$, $p\in\N$.
\item If $\Vert \mb R\Vert_p\lesssim_{\veps,p} N^\veps \Phi$ for all $\veps>0$, $p\in\N$ then $\Vert \mb R\Vert_*^{K,\mb x,\mb y}\prec\Phi$ for any fixed $K\in\N$ and $\mb x,\mb y\in\C^{N}$.
\end{enumerate}
\label{lemma stoch dom}
\end{lemma}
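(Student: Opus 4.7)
The plan is to prove the two implications separately. Both are standard equivalences between high-probability bounds and moment bounds, and the key mechanism in each direction is Markov's inequality at sufficiently large moments combined with a union bound over a polynomially large index set.

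For the first direction, I would fix $\veps > 0$ and $p\in\N$ and split the expectation
\begin{equation*}
\E|\langle \mb x,\mb R\mb y\rangle|^p = \E\bigl[|\langle \mb x,\mb R\mb y\rangle|^p\,\mathbb 1_{\mc A}\bigr] + \E\bigl[|\langle \mb x,\mb R\mb y\rangle|^p\,\mathbb 1_{\mc A^c}\bigr],
\end{equation*}
where $\mc A$ denotes the good event $\{|\langle \mb x,\mb R\mb y\rangle|\leq N^{\veps/2}\Phi\|\mb x\|\|\mb y\|\}$. On $\mc A$ the bound $N^{\veps p/2}\Phi^p\|\mb x\|^p\|\mb y\|^p$ is immediate. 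On $\mc A^c$ I would use the a priori bound $|\langle \mb x,\mb R\mb y\rangle|\leq \|\mb R\|\|\mb x\|\|\mb y\|\leq N^C\|\mb x\|\|\mb y\|$ together with stochastic domination, which by choosing $D$ in the definition of $\prec$ sufficiently large (depending on $C$, $p$, $\veps$) makes the $\mc A^c$ contribution negligible. Taking the $p$-th root and supremizing over $\mb x,\mb y$ yields $\|\mb R\|_p\lesssim_{\veps,p}N^\veps\Phi$, using $\Phi\gtrsim N^{-C}$ to absorb any lower-order terms.

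For the second direction, I would apply Markov's inequality at a suitably high moment. Namely, for fixed $\mb u,\mb v\in I_j$ and any $\veps,D>0$, choose $p$ so large that $p\veps\geq D+(\text{polynomial growth of }|I_K|)$; then
\begin{equation*}
\Pb\bigl(|\langle \mb v,\mb R\mb u\rangle|>N^\veps\Phi\|\mb u\|\|\mb v\|\bigr)\leq \frac{\E|\langle \mb v,\mb R\mb u\rangle|^p}{N^{\veps p}\Phi^p\|\mb u\|^p\|\mb v\|^p}\lesssim_{\veps,p} N^{-\veps p/2}.
\end{equation*}
Union-bounding over all $\mb u,\mb v\in I_j$ and over $j\in\{0,\ldots,K\}$, one controls each of the $\Vert\mb R\Vert_{I_j}$ contributions to $\Vert\mb R\Vert_*$. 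The tail term $N^{-1/2}\max_{\mb u\in I_K}\|\mb R\mb u\|/\|\mb u\|$ is handled analogously: $\|\mb R\mb u\|^2 = \langle\mb u,\mb R^*\mb R\mb u\rangle$, and by Cauchy--Schwarz one reduces to a scalar matrix element, for which the same Markov-with-union-bound argument applies with the extra $N^{-1/2}$ prefactor absorbing the polynomial cardinality of $I_K$.

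The main obstacle, and what makes this lemma nontrivial beyond a textbook Markov argument, is verifying that the sets $I_j$ introduced recursively in \eqref{def I_j} have cardinality polynomial in $N$ that is independent of $p$, so that the union bound does not degrade the estimate. This is guaranteed by the construction: each step of the recursion enlarges $I_j$ by at most $\mc O(N^2)$ vectors (indexed by pairs $a,b\in\llbracket kN\rrbracket$), and $K$ is a fixed integer, so $|I_K|\lesssim N^{2K}$. Once this combinatorial bound is in place, choosing $p = p(\veps,D,K)$ large enough makes the Markov estimate beat the union bound, and both implications follow with constants depending only on $\veps$, $p$, and $K$ as claimed.
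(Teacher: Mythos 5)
Your argument is correct in substance, and it is essentially the only proof this statement has: the paper does not prove the lemma at all but imports it verbatim from \cite[Lemma~5.4]{ERDOS_2019SCD}, where the proof is the same standard equivalence between stochastic domination and high-moment bounds via Markov's inequality, union bounds, and the polynomial (in $N$, with exponent depending only on the fixed $K$) cardinality of the sets $I_j$. Two small points deserve tightening. First, for the tail term $N^{-1/2}\max_{\mb u\in I_K}\Vert\mb R\mb u\Vert/\Vert\mb u\Vert$, the reduction ``by Cauchy--Schwarz'' from $\langle\mb u,\mb R^*\mb R\mb u\rangle$ to a scalar matrix element is not quite right as stated: no moment bounds are assumed for $\mb R^*\mb R$, and writing $\Vert\mb R\mb u\Vert=\langle\mb w,\mb R\mb u\rangle$ with $\mb w=\mb R\mb u/\Vert\mb R\mb u\Vert$ uses a random test vector, which the isotropic $p$-norm hypothesis does not cover. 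The clean fix is the coordinate expansion $\Vert\mb R\mb u\Vert^2=\sum_a|\langle\mb e_a,\mb R\mb u\rangle|^2\leq kN\max_a|\langle\mb e_a,\mb R\mb u\rangle|^2$, so that the prefactor $N^{-1/2}$ in the definition of $\Vert\cdot\Vert_*$ exactly compensates the $\sqrt{kN}$ from the dimension, while the union bound over $a$ and $\mb u\in I_K$ is again beaten by taking $p$ large. Second, each recursion step multiplies $|I_j|$ by $\mc O(N^2)$ (it adds $\mc O(N^2)$ vectors per element of $I_j$), rather than adding $\mc O(N^2)$ vectors in total; this still gives $|I_K|\lesssim N^{2K+1}$, so your conclusion that the cardinality is polynomial with a $p$-independent exponent, and hence harmless for the union bound, is unaffected.
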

    
\begin{lemma}[A priori bound] 
$\mb G_0$ satisfies the a priori bound
\begin{equation}
\mb G_0^*\mb G_0\lesssim\frac{1}{\eta^2}(1+\Vert \mb X\Vert^4)
\end{equation}
uniformly in $z\in\HH$ with $\eta\lesssim1$.
\label{lemma a priori}
\end{lemma}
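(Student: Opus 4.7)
The plan is to bound $\|\mb G_0\|$ crudely via the explicit block representation \eqref{resolvent delta} and then use $\mb G_0^* \mb G_0 \le \|\mb G_0\|^2 \,\mb I$. Since $A$ and $b,c$ are fixed coefficients independent of $N$, $\|A\|\lesssim 1$. The key observation is that the $(1,1)$ block $\mb g = (q(\mb X)-z)^{-1}$ is the resolvent of a genuinely Hermitian matrix (as $A^*=A$, $b\in\R^l$, $c\in\R$, and the $\mb X_i$ are Hermitian, $q(\mb X)$ is Hermitian), so the standard resolvent estimate
\begin{equation*}
\|\mb g\| \le \frac{1}{\eta}
\end{equation*}
holds unconditionally on $\HH$. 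Note that \eqref{resolvent delta} is used as the \emph{definition} of $\mb G_0$, so no invertibility of $A$ is needed.

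Next, I would bound the four blocks of $\mb G_0$ in operator norm by submultiplicativity, using $\|\mb g\|\le 1/\eta$ and $\|A\|\lesssim 1$:
\begin{equation*}
\|\mb g\|\le \tfrac{1}{\eta},\qquad
\|\mb g\mb X^t A\|,\ \|A\mb X\mb g\|\lesssim \tfrac{\|\mb X\|}{\eta},\qquad
\|{-}A + A\mb X\mb g\mb X^t A\|\lesssim 1+\tfrac{\|\mb X\|^2}{\eta}.
\end{equation*}
Applying the elementary block-matrix bound $\|\mb G_0\|^2\le \sum_{i,j}\|(\mb G_0)_{ij}\|^2$ and using $\eta\lesssim 1$ to absorb lower powers of $1/\eta$, one obtains
\begin{equation*}
\|\mb G_0\|^2\;\lesssim\; \frac{1}{\eta^2}\bigl(1+\|\mb X\|^2+\|\mb X\|^4\bigr) + 1 \;\lesssim\; \frac{1+\|\mb X\|^4}{\eta^2},
\end{equation*}
and the lemma follows from $\mb G_0^*\mb G_0\le \|\mb G_0\|^2\,\mb I$.

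There is essentially no obstacle here: the bound is deliberately non-sharp (a genuine Ward identity is unavailable for $\delta=0$ because $\mb G_0$ is not the resolvent of a Hermitian matrix, which is precisely the issue addressed later by comparison with $\mb G_1$), and the only subtlety is to notice that the $1/\eta^2$ factor originating from $\|\mb g\|$ dominates all lower-order $\eta^{-1}$ contributions under the assumption $\eta\lesssim 1$. This crude bound is exactly what is needed as the starting input for the bootstrap that drives the local law.
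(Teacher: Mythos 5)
Your proposal is correct and follows essentially the same route as the paper: bound $\mb G_0$ blockwise via the explicit representation \eqref{resolvent delta}, use the resolvent bound $\Vert \mb g\Vert\leq\eta^{-1}$ for the Hermitian matrix $q(\mb X)$, and conclude via $\mb G_0^*\mb G_0\leq\Vert\mb G_0\Vert^2\,\mb I$ with $\eta\lesssim1$ absorbing the lower-order terms. Your explicit remark that \eqref{resolvent delta} serves as the definition of $\mb G_0$ (so no invertibility of $A$ is needed) is a nice clarification that the paper leaves implicit in this proof.
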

\begin{proof}
 We recall that $\mb G_0$ is given by
\begin{equation}
\mb G_0=
\begin{pmatrix}
\mb g & \mb g\mb X^tA\\
A\mb X\mb g & -A+A\mb X\mb g\mb X^tA
\end{pmatrix}.
\end{equation}
We estimate $\mb G_0$ blockwise to obtain
\begin{equation}
\mb G_0\mb G_0^*\leq\Vert \mb G_0\Vert ^2
\lesssim\Vert \mb g\Vert^2(1+\Vert\mb X\Vert^4)
\leq\frac{1}{\eta^2}(1+\Vert\mb X\Vert^4).
\end{equation}
The last inequality holds because $\mb g$ is a resolvent.
\end{proof}

The lemma is used in the last step of the following estimate on $\mb G_0^*\mb G_0$,
\begin{equation}
\begin{split}
    \mb G_0^*\mb G_0
    &\leq2(\mb G_1^*\mb G_1+(\mb G_0-\mb G_1)^*(\mb G_0-\mb G_1))\\
    &=2(\mb G_1^*\mb G_1+\eta^2 \mb G_1^*(I_{l+1}-J)\mb G_0^*\mb G_0(I_{l+1}-J)\mb G_1)
    \lesssim (1+\Vert \mb X\Vert^4)\mb G_1^*\mb G_1.
\label{Eq G_0 est}
\end{split}
\end{equation}
In the first inequality, we applied Lemma~\ref{Lemma Matrix ineq} from the appendix with $R=\mb G_1$ and $T=\mb G_0-\mb G_1$. In the second step, we used
\begin{equation}
    \mb G_0-\mb G_1=\mb G_0(\mb G_1^{-1}-\mb G_0^{-1})\mb G_1=-\I\eta \mb G_0(I_{l+1}-J)\mb G_1.
\end{equation}
Taking the p-norm on both sides of \eqref{Eq G_0 est} and applying Hölder's inequality we obtain
\begin{equation}
\Vert \mb G_0^*\mb G_0\Vert_p
\lesssim_p (1+\Vert\Vert \mb X\Vert^4\Vert_{2p})\Vert \mb G_1^*\mb G_1\Vert_{2p}
\lesssim_p\Vert \mb G_1^*\mb G_1\Vert_{2p}=\frac{\Vert \Im \mb G_1\Vert_{2p}}{\eta}
\label{eq ward substitution}
\end{equation}
 for all $p\in\N$. Here we use $\Vert Y\Vert_p:=(\E[|Y|^p])^{1/p}$ for scalar random variables $Y$. In the second to last step, we used $\Vert \mb X\Vert_p\lesssim_p1$, which follows from Lemma~\ref{lemma stoch dom} since $\Vert \mb X\Vert\prec1$ and $\Vert \mb X\Vert\leq N^C$ by assumption \eqref{boundedness assumption}.

We define the sets
\begin{equation}
    \mathbb D^{\kappa_0}:=\{z\in\HH:\,|E-\tau_0|\leq\kappa_0\}
    ,\quad
    \mathbb G^{C,\eta_0}:=\{z\in\HH:\,C^{-1}<\dist(E,\supp(\rho))<C,\eta\leq\eta_0\}
\end{equation}
and the random variable
\begin{equation}
    \Theta_\delta:= \frac{\langle L\otimes \mb I_N,\mb \Delta_\delta\rangle}{\langle L\otimes \mb I_N,B\otimes \mb I_N\rangle}.
\end{equation}
With $\Theta_\delta$ we can write the projection of $\mb \Delta_\delta$ onto the critical direction (see \eqref{eq mathscr P}) as $\mathscr P[\mb \Delta_\delta] = \Theta_\delta B$. Furthermore, let $\chi(A)$ denote the indicator function on event $A$.
We import following lemmata from \cite[Proposition~3.3]{Alt_2020edge} and \cite[Lemma~3.9]{Alt_2020edge}.
\begin{lemma}[{\cite[Proposition~3.3]{Alt_2020edge}}]
Let $\delta\in\{0,1\}$ and $\Vert \cdot\Vert_*=\Vert\cdot\Vert_*^{K,\mb x,\mb y}$. There is a $\kappa_0\sim1$ and deterministic matrices $\mb R_i$ with $\Vert\mb R_i\Vert\lesssim1$ for $i=1,2$ such that 
\begin{equation}
\mb \Delta_\delta\chi(\Vert \mb \Delta_\delta\Vert_*\leq N^{-\frac3K})=(\Theta_\delta B-\IL^{-1}[(\mathbb 1-\mathscr P)[M_\delta\mb D]]+\mathcal{E})\chi(\Vert \mb \Delta_\delta\Vert_*\leq N^{-\frac3K}),
\label{Eq Delta from Theta}
\end{equation}
with an error function $\mathcal{E}$ of size 
\begin{equation}
\Vert \mathcal{E}\Vert_*=\mathcal{O} \left(N^{\frac2K}(|\Theta_\delta|^2+\Vert \mb D\Vert_*^2)\right)
\end{equation}
and $\Theta_\delta$ satisfying the approximate quadratic equation
\begin{equation}
\left(\xi_1\Theta_\delta+\xi_2\Theta_\delta^2\right)\chi(\Vert \mb \Delta_\delta\Vert_*\leq N^{-\frac3K})=\mathcal{O}\left(N^{\frac2K}\Vert \mb D\Vert_*^2+|\langle\mb R_1,\mb D\rangle|+|\langle\mb R_2,\mb D\rangle|\right)
\end{equation}
with $\xi_1\sim\sqrt{\kappa+\eta}$, $\xi_2\sim1$ uniformly in $\mb x,\mb y\in\C^{(l+1)N}$ and $z\in\mathbb{D}^{\kappa_0}$.
\label{lemma quad eq}
\end{lemma}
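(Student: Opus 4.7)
The plan is to derive Lemma~\ref{lemma quad eq} by linearising the perturbed Dyson equation around $M_\delta$ and projecting onto the critical direction of the stability operator $\IL$ supplied by Proposition~\ref{prop stab}. Starting from \eqref{perturbed DE} and the Dyson equation \eqref{DE} and subtracting, one obtains after multiplying by $M_\delta$ from the left the relation
\begin{equation}
\IL[\mb\Delta_\delta]=M_\delta\mb D-M_\delta\mc S[\mb\Delta_\delta]\mb\Delta_\delta+M_\delta(K_0-zJ-\I\eta\delta(\mb I-J))\mb\Delta_\delta\cdot(\text{lower order}),
\end{equation}
which, after absorbing the linear correction that arises from the identity $\mb G_\delta^{-1}-M_\delta^{-1}=-\mc S[\mb\Delta_\delta]$ into $\IL$, takes the schematic form $\IL[\mb\Delta_\delta]=M_\delta\mb D+\mc Q[\mb\Delta_\delta,\mb\Delta_\delta]$ with $\mc Q$ a bounded bilinear form in its two arguments. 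This is the standard expansion also used in \cite{Alt_2020edge,ERDOS_2019SCD} and I would follow the same algebraic manipulation.

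The next step is to split $\mb\Delta_\delta=\Theta_\delta B+(\mathbb 1-\mathscr P)[\mb\Delta_\delta]$ using the spectral projection $\mathscr P$ from \eqref{eq mathscr P}. Applying $(\mathbb 1-\mathscr P)\IL^{-1}$ to the relation above and using $\Vert(\mathbb 1-\mathscr P)\IL^{-1}\Vert_{\mr{sp}}\lesssim 1$ from Proposition~\ref{prop stab}, one finds
\begin{equation}
(\mathbb 1-\mathscr P)[\mb\Delta_\delta]=-\IL^{-1}[(\mathbb 1-\mathscr P)[M_\delta\mb D]]+\mc O_{\Vert\cdot\Vert_*}\!\bigl(\Vert\mb\Delta_\delta\Vert_*^2\bigr),
\end{equation}
which, upon inserting $\mb\Delta_\delta=\Theta_\delta B+\mc O_{\Vert\cdot\Vert_*}(\Vert\mb\Delta_\delta\Vert_*)$ into the quadratic remainder and restricting to the event $\{\Vert\mb\Delta_\delta\Vert_*\leq N^{-3/K}\}$, gives \eqref{Eq Delta from Theta} with $\mc E=\mc O(N^{2/K}(|\Theta_\delta|^2+\Vert\mb D\Vert_*^2))$; the factor $N^{2/K}$ comes from bounding products in $\Vert\cdot\Vert_*$ in terms of $\Vert\cdot\Vert_{*,j+1}$ through the hierarchy of sets $I_j$, exactly as in \cite[Lemma~5.6]{ERDOS_2019SCD}.

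For the scalar equation, I would pair the linearised identity with $L\otimes\mb I_N$. Since $\IL^*[L\otimes\mb I_N]=\bar\beta\, L\otimes\mb I_N$ with $|\beta|\sim\sqrt{\kappa+\eta}$, the leading linear contribution reads $\bar\beta\,\langle L\otimes\mb I_N,B\otimes\mb I_N\rangle\Theta_\delta=:\xi_1\Theta_\delta$ with $\xi_1\sim\sqrt{\kappa+\eta}$; the quadratic contribution on the left is $\langle L\otimes\mb I_N,M_\delta\mc S[B]B\rangle\Theta_\delta^2+\mc O(|\Theta_\delta|\Vert(\mathbb 1-\mathscr P)\mb\Delta_\delta\Vert_*+\Vert(\mathbb 1-\mathscr P)\mb\Delta_\delta\Vert_*^2)$, and the coefficient is of order one by the last identity in Proposition~\ref{prop stab}. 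Substituting the already-obtained bound for $(\mathbb 1-\mathscr P)[\mb\Delta_\delta]$ turns the cross and pure $(\mathbb 1-\mathscr P)$-quadratic terms into $\mc O(N^{2/K}\Vert\mb D\Vert_*^2)$ plus two genuinely linear averaged expressions $\langle\mb R_1,\mb D\rangle+\langle\mb R_2,\mb D\rangle$ arising from pairing $L\otimes\mb I_N$ with $\IL^{-1}[(\mathbb 1-\mathscr P)[M_\delta\mb D]]$ through the bilinear form $\mc Q$; the deterministic matrices $\mb R_1,\mb R_2$ inherit the bound $\Vert\mb R_i\Vert\lesssim 1$ from $\Vert M_\delta\Vert,\Vert L\Vert,\Vert B\Vert\lesssim 1$ and $\Vert(\mathbb 1-\mathscr P)\IL^{-1}\Vert_{\mr{sp}}\lesssim 1$.

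The main obstacle is bookkeeping the error terms in the $\Vert\cdot\Vert_*$-norm so that nothing is lost beyond a tiny $N^{2/K}$ factor. Three features make this delicate: first, the quadratic form $\mc S[\mb R]\mb T$ does not factorise as a matrix product of $\mb R$ and $\mb T$, so one must invoke the closedness of $\Vert\cdot\Vert_*$ under multiplication by elements of the sets $I_j$ and by $M_\delta$, which is precisely what the recursive definition \eqref{def I_j} is built to provide; second, the cutoff $\chi(\Vert\mb\Delta_\delta\Vert_*\leq N^{-3/K})$ is essential to ensure that the second iteration $\mb\Delta_\delta=\Theta_\delta B+\mc O(\Vert\mb\Delta_\delta\Vert_*)$ can be inserted into the quadratic term without amplification; third, one has to identify the two \emph{averaged} contributions $\langle\mb R_i,\mb D\rangle$ separately rather than bounding them by $\Vert\mb D\Vert_*$, because these are the terms that later yield the improved averaged local-law rate via the averaged bound on $\mb D$ from Proposition~\ref{prop error scd}. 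Once these three bookkeeping points are settled the proof is a line-by-line translation of \cite[Proposition~3.3]{Alt_2020edge} to our generalised-resolvent setting, with the only input specific to our polynomial being the spectral data of $\IL$ from Proposition~\ref{prop stab}.
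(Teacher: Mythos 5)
Your proposal is correct and follows essentially the same route as the paper: the paper's proof simply invokes \cite[Proposition~3.3]{Alt_2020edge} verbatim, replacing the stability input \cite[Proposition~3.1]{Alt_2020edge} by Proposition~\ref{prop stab}, and your sketch (linearising the perturbed Dyson equation, splitting $\mb\Delta_\delta$ via $\mathscr P$, pairing with $L$ to get $\xi_1\sim|\beta|\sim\sqrt{\kappa+\eta}$ and $\xi_2\sim|\langle L,M_\delta\mc S[B]B\rangle|\sim1$, with the $\Vert\cdot\Vert_*$-bookkeeping from \cite{ERDOS_2019SCD}) is precisely that argument. The minor sign discrepancies and the schematic first display are inconsequential, since they match the cited derivation up to conventions.
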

\begin{proof}
    The proof of \cite[Proposition~3.3]{Alt_2020edge} uses \cite[Proposition~3.1]{Alt_2020edge} as an input. After replacing \cite[Proposition~3.1]{Alt_2020edge} by our analogous Proposition~\ref{prop stab} the proof follows theirs line by line.
\end{proof}
\begin{lemma}[{\cite[Lemma~3.9]{Alt_2020edge}}]
Let $d=d(\eta)$ be a monotonically decreasing function in $\eta\geq N^{-1}$ and assume $0\leq d\lesssim N^{-\veps}$ for some $\veps>0$. Suppose there are $\kappa_0,\gamma>0$ such that
\begin{equation}
|\xi_1\Theta_\delta+\xi_2\Theta_\delta^2|\lesssim d
\text{ for all } z\in\mathbb{D}_\gamma^{\kappa_0}
\quad
\text{and}
\quad
|\Theta_\delta|\lesssim\min\left\{\frac{d}{\sqrt{\kappa+\eta}},\sqrt{d}\right\}
\text{for some } z_0\in\mathbb{D}_\gamma^{\kappa_0}
.
\end{equation}
Then also $|\Theta_\delta|\lesssim\min\{d/\sqrt{\kappa+\eta},\sqrt{d}\}$ for all $z'\in\mathbb{D}_\gamma^{\kappa_0}$ with $\Re z'=\Re z_0$ and $\Im z'\leq\Im z_0$.
\label{lemma quad to lin}
\end{lemma}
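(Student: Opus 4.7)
The plan is a continuity (bootstrap) argument along the vertical segment $\{E+\I\eta : \eta \in [N^{-1+\gamma}, \Im z_0]\}$ with $E = \Re z_0$. For fixed $\delta$ the map $\eta \mapsto \Theta_\delta(E+\I\eta)$ is continuous (in fact real analytic) on this segment, since $\mb G_\delta$ and $M_\delta$ are both analytic in $z$ on $\HH$, and $\Theta_\delta = \langle L,\mb G_\delta - M_\delta\rangle/\langle L,B\rangle$.

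Next I would turn the quadratic inequality into a geometric statement. Factorising
\[
|\xi_2||\Theta_\delta - r_+||\Theta_\delta - r_-| \lesssim d, \qquad
r_\pm = \frac{-\xi_1 \pm \sqrt{\xi_1^{\,2} + \mc{O}(d)}}{2\xi_2},
\]
and using $\xi_1 \sim \sqrt{\kappa+\eta}$, $\xi_2 \sim 1$, one obtains a dichotomy. In the \emph{separated} regime $\kappa+\eta \gtrsim d$ the two roots satisfy $|r_+| \lesssim d/\sqrt{\kappa+\eta}$ and $|r_-| \sim \sqrt{\kappa+\eta}$, and any admissible $\Theta_\delta$ must lie within distance $\mc{O}(d/\sqrt{\kappa+\eta})$ of one of them (otherwise both factors are of order $\sqrt{\kappa+\eta}$ and the product exceeds $d$). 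This yields two disjoint admissible discs $D_{\mr s}$ (around $r_+$) and $D_{\mr l}$ (around $r_-$), separated by a gap of order $\sqrt{\kappa+\eta}$ which strictly dominates their radii. In the \emph{merged} regime $\kappa+\eta \lesssim d$ the two discs fuse into one set of diameter $\mc{O}(\sqrt d)$. In either case the radius of the small, respectively merged, set is precisely $\min\{d/\sqrt{\kappa+\eta},\sqrt d\}$, which is the target bound.

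Fix a sufficiently large $z$-independent constant $C$ (depending only on the implicit constants above) and set
\[
\eta_* := \inf\bigl\{\eta \in [N^{-1+\gamma}, \Im z_0] : |\Theta_\delta(E+\I\widetilde\eta)| \leq C \min\{d(\widetilde\eta)/\sqrt{\kappa+\widetilde\eta}, \sqrt{d(\widetilde\eta)}\} \textrm{ for all } \widetilde\eta \in [\eta, \Im z_0]\bigr\}.
\]
The hypothesis at $z_0$ ensures $\eta_* \leq \Im z_0$; the aim is to show $\eta_* = N^{-1+\gamma}$. Suppose otherwise. By continuity of $\Theta_\delta$, $d$ and $\eta \mapsto \kappa+\eta$, at $\eta_*$ the value of $\Theta_\delta$ lies in $D_{\mr s}$ (or the merged disc). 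In a sufficiently small neighborhood of $\eta_*$ the geometric picture varies continuously, so the gap between $D_{\mr s}$ and $D_{\mr l}$ remains a constant multiple of the radii, ruling out a jump to $D_{\mr l}$; hence the inclusion persists on a slightly larger interval, contradicting minimality.

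The main obstacle is the transition $\kappa+\eta \sim d$ where the two discs merge. One has to verify that as $\eta$ decreases past this threshold the trajectory $\Theta_\delta$ stays on the $D_{\mr s}$ side and does not later correspond to a point near $D_{\mr l}$ once the gap (hypothetically) reopens. Here the monotonicity assumption on $d$ is essential: $d$ is decreasing in $\eta$, so as $\eta \searrow$ both $d$ increases and $\kappa+\eta$ decreases, and consequently the ratio $(\kappa+\eta)/d$ is strictly decreasing along the bootstrap. Thus once we enter the merged regime we remain there, the gap never reopens below $\eta_*$, and the admissible set collapses to a single disc of diameter $\mc{O}(\sqrt d)$ in which $\Theta_\delta$ is trapped. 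This one-directional flow, combined with the gap-based continuity obstruction above the threshold, closes the argument.
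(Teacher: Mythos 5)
The paper does not prove this lemma at all: it is imported verbatim from the cited reference, so there is no internal proof to compare your attempt against. Your argument correctly reconstructs the standard proof used there: the dichotomy $|\Theta_\delta|\lesssim d/\sqrt{\kappa+\eta}$ or $|\Theta_\delta|\gtrsim\sqrt{\kappa+\eta}$ in the regime $\kappa+\eta\gg d$, continuity of $\Theta_\delta$ along the vertical segment to keep the solution on the small branch as $\eta$ decreases, the unconditional bound $|\Theta_\delta|\lesssim\sqrt d$ once $\kappa+\eta\lesssim d$, and the monotonicity of $d$ (with $\kappa$ fixed because $\Re z$ is fixed) to guarantee that the merged regime, once entered, is never exited — which is precisely where that hypothesis is needed. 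The only informal step is writing the roots as $r_\pm=\bigl(-\xi_1\pm\sqrt{\xi_1^{\,2}+\mc O(d)}\bigr)/(2\xi_2)$, which treats the inequality as an equation; the clean formulation is that the admissible set $\{|\xi_1\Theta+\xi_2\Theta^2|\lesssim d\}$ is a Cassini-type region whose two components around $0$ and $-\xi_1/\xi_2$ have diameter $\mc O(d/\sqrt{\kappa+\eta})$ when $\kappa+\eta\gg d$, and this is exactly how you subsequently use it, so the point is cosmetic rather than a gap.
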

\begin{prop}[Global Law]
For all $C>0$ and some $\kappa_0>0$ with $\kappa_0\sim1$  there is an $\eta_0>0$ such that for all $\delta\in[0,1]$, $z\in\mathbb{D}^{\kappa_0}\cup\mathbb G^{C,\eta_0}$ and $\veps,D>0$ the isotropic global law,
\begin{equation}
\Pb\left(|\langle \mb x, \mb \Delta_\delta\mb y\rangle|>\Vert\mb x\Vert\Vert\mb y\Vert \frac{N^\veps}{\sqrt{N}}\right)
\lesssim_{\veps,D,\eta,C}N^{-D}
\end{equation}
holds for all deterministic $\mb x,\mb y\in\C^{(l+1)N}$. Additionally, we have a global averaged law,
\begin{equation}
\Pb\left(|\langle \mb B\mb \Delta_\delta\rangle|>\Vert\mb B\Vert\frac{N^\veps}{N}\right)\lesssim_{\veps,D,\eta,C}N^{-D},
\end{equation}
for all deterministic $\mb B\in\C^{(l+1)N\times(l+1)N}$.
\label{prop global law}
\end{prop}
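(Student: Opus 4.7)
The plan is to subtract the deterministic Dyson equation \eqref{DE} from its perturbed counterpart \eqref{perturbed DE} to extract an implicit identity for $\mb\Delta_\delta:=\mb G_\delta-M_\delta$, and then to invert it via the stability operator $\IL$. Using $-(zJ+\I\eta\delta(\mb I-J)-K_0) = M_\delta^{-1}+\Gamma[M_\delta]$ together with $\widetilde\Gamma[M_\delta]=\Gamma[M_\delta]=\mc S[M_\delta]$ (the last equality because $\mc S_{\mr o}[M_\delta]=0$ by \eqref{eq SE small}, as $M_\delta=M_\delta\otimes\mb I_N$ is block-diagonal with scalar blocks), a short manipulation gives
\begin{equation}
\IL[\mb\Delta_\delta] = -M_\delta\mb D + M_\delta\mc S[\mb\Delta_\delta]\mb\Delta_\delta,
\label{eq plan implicit}
\end{equation}
equivalently the fixed-point identity $\mb\Delta_\delta = -\IL^{-1}[M_\delta\mb D] + \IL^{-1}[M_\delta\mc S[\mb\Delta_\delta]\mb\Delta_\delta]$.

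To close \eqref{eq plan implicit}, three ingredients are needed. First, stability: Proposition~\ref{prop stab} yields $\Vert\IL^{-1}\Vert_{\mr{sp}}\lesssim_C 1$ uniformly on $\mathbb G^{C,\eta_0}$ and $\Vert\IL^{-1}\Vert_{\mr{sp}}\lesssim (\kappa+\eta)^{-1/2}$ on $\mathbb D^{\kappa_0}$, bounded for $\eta$ of order one. Second, boundedness of the deterministic pieces: $\Vert M_\delta\Vert\lesssim 1$ follows from \eqref{sol DE} and Lemma~\ref{lemma m delta asymptotics}, and $\Vert\mc S\Vert_{\mr{sp}}\lesssim 1$ from \eqref{Gamma formula}--\eqref{eq SE small}. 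Third, and most importantly, the error bound from Proposition~\ref{prop error scd}, for which one needs $\Vert\mb G_\delta^*\mb G_\delta\Vert_{p_0}\lesssim 1$ up to $\eta$-dependent factors. For $\delta=1$ this is immediate: $\mb G_1$ is a resolvent of the Hermitian matrix $\mb L-EJ$ at spectral parameter $\I\eta$, so $\mb G_1^*\mb G_1 = \eta^{-1}\Im\mb G_1$ and $\Vert\mb G_1\Vert\leq\eta^{-1}$. For $\delta\in[0,1)$ the identity $\mb G_\delta-\mb G_1=-\I\eta(1-\delta)\mb G_\delta(\mb I-J)\mb G_1$ combined with Lemma~\ref{lemma a priori} yields, via the substitution underlying \eqref{eq ward substitution}, the bound $\Vert\mb G_\delta^*\mb G_\delta\Vert_p\lesssim_p \eta^{-1}\Vert\Im\mb G_1\Vert_{2p}\lesssim_p \eta^{-2}$. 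Plugging this into Proposition~\ref{prop error scd} produces
\begin{equation}
\Vert\mb D\Vert_p\lesssim_{p,\veps,\eta} N^{-1/2+\veps},\qquad \Vert\mb D\Vert_p^{\mr{av}}\lesssim_{p,\veps,\eta} N^{-1+\veps}.
\label{eq plan D}
\end{equation}

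Equation \eqref{eq plan implicit} is then closed by a standard continuity-in-$\eta$ argument. At $\eta$ large enough (depending only on the coefficients of $q$), a Neumann expansion around $-(\I\eta)^{-1}\mb I$ forces $\Vert\mb\Delta_\delta\Vert\ll N^{-1/2}$ deterministically, so that the quadratic term $\IL^{-1}[M_\delta\mc S[\mb\Delta_\delta]\mb\Delta_\delta]$ is strictly subleading to $\IL^{-1}[M_\delta\mb D]$. Decreasing $\eta$ continuously inside $\mathbb D^{\kappa_0}\cup\mathbb G^{C,\eta_0}$ and iterating \eqref{eq plan implicit} propagates the bound $\Vert\mb\Delta_\delta\Vert_p\lesssim_{p,\veps,\eta}N^{-1/2+\veps}$ isotropically and $\Vert\langle\mb B\mb\Delta_\delta\rangle\Vert_p\lesssim_{p,\veps,\eta}\Vert\mb B\Vert N^{-1+\veps}$ in the averaged sense, since the quadratic correction is of order $(N^{-1/2+\veps})^2$ and is absorbed into $N^{-1/2+\veps}$. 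A final appeal to Lemma~\ref{lemma stoch dom} converts these $p$-norm bounds into the high-probability statements of the proposition.

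The main obstacle is that, for $\delta=0$, $\mb G_0$ is a generalised resolvent, not a resolvent, so the Ward identity $\mb G^*\mb G=\eta^{-1}\Im\mb G$ is unavailable. Consequently Proposition~\ref{prop error scd} retains $\Vert\mb G_0^*\mb G_0\Vert_{p_0}$ rather than the typical $\eta^{-1}\Vert\Im\mb G_0\Vert_{p_0}$, and without additional input the fixed-point loop would not close in a form that can later be sharpened towards the optimal edge local law. The work-around is the comparison $\mb G_0-\mb G_1=-\I\eta\mb G_0(\mb I-J)\mb G_1$ leading to \eqref{eq ward substitution}, which routes all resolvent-type norms at $\delta=0$ through the genuine resolvent $\mb G_1$; supported by the a priori bound of Lemma~\ref{lemma a priori} and $\Vert\mb X\Vert\prec 1$, it bounds $\Vert\mb G_0^*\mb G_0\Vert_p$ by $\eta^{-1}\Vert\Im\mb G_1\Vert_{2p}$ and thereby makes the $\delta=0$ case as tractable as $\delta=1$.
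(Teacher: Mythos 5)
Your route is genuinely different from the paper's: the paper does not run a self-consistent fixed-point argument at all. It observes that for $\delta=1$ and invertible $A$ the matrix $\mb G_1=(\mb L-EJ-\I\eta\mb I)^{-1}$ is an honest resolvent of the Hermitian matrix $\mb L-EJ$ at spectral parameter $\I\eta$, imports the global law directly from \cite[Theorem~2.1]{ERDOS_2019SCD}, and then transfers the statement to non-invertible $A$ (regularizing $A\mapsto A+\veps$) and to $\delta\in[0,1)$ by stochastic continuity arguments at fixed $z$, using that $M_\delta$, $\mb G_\delta$ and $\IL$ are continuous in $\veps$ and $\delta$. Your plan of deriving $\IL[\mb\Delta_\delta]=-M_\delta\mb D+M_\delta\mc S[\mb\Delta_\delta]\mb\Delta_\delta$ and closing it with Proposition~\ref{prop error scd}, the Ward substitution \eqref{eq ward substitution} and a continuity argument could in principle reprove the result, but as written it has a genuine gap.

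The gap is the stability input along your continuity path. You quote Proposition~\ref{prop stab} as giving $\Vert\IL^{-1}\Vert_{\mr{sp}}\lesssim(\kappa+\eta)^{-1/2}$ \emph{on} $\mathbb D^{\kappa_0}$; it does not. It controls $\IL^{-1}$ only for $|z-\tau_0|<u$ with some $u\sim1$, and away from the spectrum only for $\eta\leq\eta_0$, whereas $\mathbb D^{\kappa_0}$ contains all $\eta>0$. Your bootstrap descends in $\eta$ from a large-$\eta$ starting point, and the path necessarily crosses the region where $|z-\tau_0|\sim1$ (respectively $\eta_0<\eta\lesssim1$ above $\mathbb G^{C,\eta_0}$); there no bound on $\IL^{-1}$ is established anywhere in the paper, and the trivial Neumann bound $\Vert M_\delta\mc S[\cdot]M_\delta\Vert_{\mr{sp}}<1$ only helps once $\eta$ exceeds a large constant. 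Without stability in that intermediate region the fixed-point inequality cannot be propagated downwards; supplying it amounts to redoing the global MDE stability theory that the paper avoids precisely by citing \cite{ERDOS_2019SCD} (and which is not standard for the generalized resolvent $\delta=0$). A secondary flaw: the starting claim that $\Vert\mb\Delta_\delta\Vert\ll N^{-1/2}$ holds \emph{deterministically} at $\eta$ large ``depending only on the coefficients of $q$'' is false — at any $N$-independent $\eta$ the fluctuations of $\langle\mb x,\mb\Delta_\delta\mb y\rangle$ are themselves of order $N^{-1/2}$, and for $\delta=0$ the $(2,2)$ block of $\mb\Delta_0$ is only $O((1+\Vert\mb X\Vert^2)/\eta)$. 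One must either start at $\eta\gtrsim N^{1+2\alpha}$ using \eqref{boundedness assumption}, or settle for an $o(1)$ a priori bound and close the quadratic term differently; either fix lengthens the uncontrolled stretch of the path and does not remove the stability gap above. You also leave the non-invertible $A$ case implicit, which the paper handles by the $A+\veps$ continuity argument.
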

\begin{proof}
    First, consider $\delta=1$ and $A$ being invertible. Then for any $z\in\HH$ the matrix $\mb G_1$ is a resolvent of the Hermitian random matrix $\mb L-EJ$ with spectral parameter $\I\eta$. In this regime the global law is covered by \cite[Theorem~2.1]{ERDOS_2019SCD}. 
    
    Now consider $A$ to be non-invertible. Then there is a $u>0$ s.t. $A+\veps$ is invertible for all $0<\veps< u$ and a global law for $A+\veps$ also follows from \cite[Theorem~2.1]{ERDOS_2019SCD}. For $C>0$ fix a $z\in\mathbb{D}^{\kappa_0}\cup\mathbb G^{C,\eta_0}$ with $\kappa_0,\eta_0$ sufficiently small for Proposition~\ref{prop stab} to be applicable. $\IL$ is continuous in $\veps\in[0,u]$ for sufficiently small $u\sim1$ and Proposition~\ref{prop stab} holds true uniformly for sufficiently small $\veps$. As $M_1$ and $\mb G_1$ are also continuous in $\veps\in[0,u]$ for some $u\sim1$, the proposition follows from a stochastic continuity argument.

    For $\delta\in[0,1)$ and arbitrary $A$ the global law follows from another stochastic continuity argument as $M_\delta$, $\mb G_\delta$ and $\IL$ are also continuous in $\delta$.
\end{proof}

Before starting our bootstrapping argument, we introduce the following notions for isotropic and averaged stochastic dominance for random matrices $\mb R$ and deterministic control parameters $\Lambda$,
\begin{equation}
    \begin{split}
    |\mb R|\prec\Lambda\text{ in } \mathbb D
    \Leftrightarrow \Vert\mb R\Vert_*^{K,\mb x,\mb y}\prec\Lambda \text{ uniform in }\mb x,\mb y\text{ and }z\in\mathbb D\\
    |\mb R|_{\mr{av}}\prec\Lambda\text{ in } \mathbb D
    \Leftrightarrow \frac{|\langle \mb B\mb R\rangle|}{\Vert \mb B\Vert}\prec\Lambda \text{ uniform in }\mb B\neq0\text{ and }z\in\mathbb D.
    \end{split}
\end{equation}
\begin{prop}[Bootstrapping]
Assume the following:
\\[5pt]
1. The isotropic and averaged local laws,
    \begin{equation}
    |\bm \Delta_\delta|\prec  N^{\frac2K}\left(\sqrt{\frac{\Im m}{N\eta}}+\frac{N^{\frac2K}}{N\eta}\right)
    ,\quad
    |\bm \Delta_\delta|_{\mr{av}}\prec 
    \begin{cases}
    \displaystyle{\frac{N^{\frac2K}}{N\eta}},& E\in\mr{supp}{\rho}, \\[10pt]
    \displaystyle{\frac{N^{\frac2K}}{N(\eta+\kappa)}+\frac{N^{\frac4K}}{N^2\eta^2\sqrt{\eta+\kappa}}},& E\notin\mr{supp}{\rho},
    \end{cases}
    \label{eq boot strapping}
    \end{equation}
    hold on $z = E + \I \eta \in \mathbb{D}_{\gamma_0}^{\kappa_0}$ for some $\gamma_0,\kappa_0, K$ and $\delta\in\{0,1\}$.
    \label{bootstrapping 1}
    \\[5pt]
2. The isotropic and averaged local laws,
    \begin{equation}
    |\bm \Delta_\delta|\prec_C N^{\frac2K}\left(\frac{1}{\sqrt{N}}+\frac{N^{\frac2K}}{N\eta}\right)
    ,\quad
    |\bm \Delta_\delta|_{\mr{av}}\prec_C
    \left(\frac{1}{N}+\frac{N^{\frac4K}}{(N\eta)^2}\right)
    \label{eq boot strapping away}
    \end{equation}
    hold on $z = E + \I \eta \in \mathbb{G}_{\gamma_0}^{C,\eta_0}$ for some $\gamma_0,C,\eta_0, K$ and $\delta\in\{0,1\}$.
    \label{bootstrapping 2}

 Then, for all $\gamma>0$ with $\gamma\geq\frac{100}{K}$ there is a $\gamma_s>0$ independent of $\gamma_0$ such that the local laws \eqref{eq boot strapping} also hold on $\mathbb{D}_{\gamma_1}^{\kappa_0}$ with $\gamma_1=\max\{\gamma,\gamma_0-\gamma_s\}$ and the local laws \eqref{eq boot strapping away} also hold on $\mathbb{G}_{\gamma_1}^{C,\eta_0}$ with $\gamma_1=\max\{\gamma,\gamma_0-\gamma_s\}$.
\label{prop bootstrapping}
\end{prop}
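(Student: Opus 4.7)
The plan is to propagate the assumed local laws on scale $\eta \geq N^{-1+\gamma_0}$ down to scale $\eta \geq N^{-1+\gamma_1}$ via a standard continuity and bootstrap argument. I would fix $E$ and cover the interval $[N^{-1+\gamma_1}, N^{-1+\gamma_0}]$ of relevant imaginary parts with a polynomially fine grid in $\eta$, propagate a high-probability bound from one grid point to the next by the single-step mechanism described below, and fill in between grid points using the $\eta^{-2}$-Lipschitz continuity of $\mathbf G_\delta$ and $M_\delta$ in $z$, absorbing the resulting polynomial loss into stochastic domination. The induction base at scale $\eta \sim 1$ is furnished by the global law, Proposition~\ref{prop global law}.

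For a single step of the bootstrap, I would first apply Proposition~\ref{prop error scd} to estimate the error matrix $\mathbf D$, yielding $\Vert \mathbf D\Vert_p^{\mr{av}} \prec N^{-1}\Vert\mathbf G_\delta^*\mathbf G_\delta\Vert_{2p}$ and an analogous isotropic statement. For $\delta = 1$ the generalised resolvent $\mathbf G_1$ is a genuine Hermitian resolvent at spectral parameter $\mr i\eta$, so the Ward identity $\mathbf G_1^*\mathbf G_1 = \eta^{-1}\Im\mathbf G_1$ applies; combining this with the inductive isotropic law $\mathbf G_1 \approx M_1$ and the sharp control of $\Im m$ from Corollary~\ref{cor m reg} yields a bound on $\mathbf D$ of the size dictated by the right-hand sides of \eqref{eq boot strapping}. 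For $\delta = 0$, where no Ward identity is available, I would invoke the substitution \eqref{eq ward substitution} to transfer control from $\mathbf G_0^*\mathbf G_0$ to $\mathbf G_1^*\mathbf G_1$, using Lemma~\ref{lemma a priori} to handle the resulting $\Vert \mathbf X\Vert^4$ factors via $\Vert\mathbf X\Vert\prec 1$. Feeding this bound on $\mathbf D$ into Lemma~\ref{lemma quad eq} produces an approximate quadratic equation $|\xi_1\Theta_\delta + \xi_2\Theta_\delta^2| \prec d$ with $d$ matching the target rate, and Lemma~\ref{lemma quad to lin} then upgrades this to the linear bound $|\Theta_\delta|\prec \min\{d/\sqrt{\kappa+\eta},\sqrt d\}$ required to advance the hypothesis. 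Finally, the decomposition \eqref{Eq Delta from Theta}, combined with the uniform bound $\Vert(\mathbb 1 - \mathscr P)\IL^{-1}\Vert_{\mr{sp}}\lesssim 1$ from Proposition~\ref{prop stab}, converts the pointwise bound on $\Theta_\delta$ together with the bound on $\mathbf D$ into the improved isotropic and averaged estimates on $\bm \Delta_\delta$.

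On the set $\mathbb G_\gamma^{C,\eta_0}$ away from the spectrum the stability operator $\IL$ is fully invertible with $\Vert\IL^{-1}\Vert_{\mr{sp}} \lesssim_C 1$ by Proposition~\ref{prop stab}, so there is no critical direction and the quadratic-equation step is unnecessary: \eqref{eq boot strapping away} follows directly by inverting $\IL$ on the perturbed Dyson equation \eqref{perturbed DE} and applying the improved bound on $\mathbf D$. The principal obstacle of the whole argument is precisely the absence of a Ward identity for the non-Hermitian generalised resolvent $\mathbf G_0$—this is where our setting departs from the Hermitian bootstrap of \cite{Alt_2020edge}—and is resolved by the inequality \eqref{eq ward substitution}, which bounds $\Vert \mathbf G_0^*\mathbf G_0\Vert_p$ by $\eta^{-1}\Vert \Im \mathbf G_1\Vert_{2p}$ up to harmless powers of $\Vert\mathbf X\Vert$. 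A secondary technical nuisance is the cutoff $\chi(\Vert\bm\Delta_\delta\Vert_*\leq N^{-3/K})$ in Lemma~\ref{lemma quad eq}; the assumption $\gamma \geq 100/K$ guarantees that the $N^{O(1/K)}$ losses accumulated per iteration stay inside the tolerance of stochastic domination, so together with the inductive hypothesis and continuity this indicator equals $1$ with overwhelming probability throughout $\mathbb D_{\gamma_1}^{\kappa_0}$ (resp.\ $\mathbb G_{\gamma_1}^{C,\eta_0}$) and can be discarded at the end of the argument.
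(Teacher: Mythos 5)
Your outline follows the paper's strategy quite closely (bounding $\mb D$ via Proposition~\ref{prop error scd}, the substitution \eqref{eq ward substitution} for $\delta=0$, Lemmata~\ref{lemma quad eq} and \ref{lemma quad to lin}, reconstruction of $\bm\Delta_\delta$ through \eqref{Eq Delta from Theta} and Proposition~\ref{prop stab}, and direct inversion of $\IL$ on $\mathbb G_{\gamma_1}^{C,\eta_0}$), but it has a gap precisely where the generalized resolvent is most dangerous. The input to Proposition~\ref{prop error scd} is not a high-probability bound but the moments $\Vert\mb G_\delta\Vert_{p_0}$, and these must be controlled on the newly accessed scales $N^{-1+\gamma_1}\leq\eta<N^{-1+\gamma_0}$, where no local law is yet available. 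Your grid-plus-Lipschitz propagation only transports high-probability bounds on $\bm\Delta_\delta$ from one grid point to the next; it does not by itself produce the bound $\Vert\mb G_\delta\Vert_{p_0}\lesssim N^{\gamma_s}$ (with $\gamma_s$ small) needed to render the factors $(1+\Vert\mb G_\delta\Vert_{p_0})^c(1+N^{-1/4}\Vert\mb G_\delta\Vert_{p_0})^{cp}$ in \eqref{bound error iso} and \eqref{bound error av} harmless. The paper supplies this input by proving that $\eta\mapsto\eta\Vert\mb G_\delta\Vert_p$ is monotonically non-decreasing, so that $\Vert\mb G_\delta\Vert_p\sim1$ on $\mathbb D_{\gamma_0}^{\kappa_0}$ propagates down to $\Vert\mb G_\delta\Vert_p\lesssim N^{\gamma_s}$ on $\mathbb D_{\gamma_1}^{\kappa_0}$; for $\delta=0$ this monotonicity is itself non-trivial and rests on the identity $\mb G_0J\mb G_0=\eta^{-1}\Im\mb G_0$, a Ward-type identity for the generalized resolvent distinct from the substitution \eqref{eq ward substitution} you invoke. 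Nothing in your proposal plays this role, so the very first application of Proposition~\ref{prop error scd} below the scale $N^{-1+\gamma_0}$ is unjustified. (One could instead try to convert the propagated high-probability bounds back into $p$-norm bounds via Lemma~\ref{lemma stoch dom}, using the deterministic control from Lemma~\ref{lemma a priori} together with \eqref{boundedness assumption}, but that conversion is exactly the missing link and would have to be carried out.)

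A second, smaller inaccuracy: at a fixed new scale $\eta$ the first pass through Proposition~\ref{prop error scd} does not yield a bound on $\mb D$ of the size dictated by the right-hand sides of \eqref{eq boot strapping}, because $\Im\mb G_1$ is not yet known there to be comparable to $\Im m$; one only obtains the rough bounds \eqref{Rough bound D}, and the optimal rates are recovered by iterating the self-improving estimates \eqref{eq self impro}, feeding the current bound on $\bm\Delta_\delta$ back into the bound on $\mb D$, with Lemma~\ref{lemma quad to lin} anchored at a spectral parameter of larger imaginary part where the inductive hypothesis holds. Your write-up compresses this into a single step; the iteration (and the separate treatment of $E\in\supp(\rho)$ and $E\notin\supp(\rho)$ needed for the averaged bounds) should be made explicit, though this is a matter of detail rather than a wrong idea.
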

\begin{proof}
We first prove the local law \eqref{eq boot strapping} for $z \in \mathbb{D}_{\gamma_1}^{\kappa_0}$ and then comment on the modifications necessary to prove \eqref{eq boot strapping away} for $z \in \mathbb{G}_{\gamma_1}^{C,\eta_0}$.
We begin by proving that $\eta\mapsto\eta\Vert \mb G_\delta\Vert_p$ is monotonically non-decreasing in $\eta$.
For $\delta=1$ the proof is given in the proof  of \cite[Proposition~5.5]{ERDOS_2019SCD}. For $\delta=0$ the proof modifies as follows. Let $\veps>0$. For fixed $E$ we define $f(\eta):=\eta\Vert\mb G_0(E+\I\eta)\Vert_p$. It satisfies
\begin{equation}
\begin{split}
\liminf_{\veps\to0}\frac{f(\eta+\veps)-f(\eta)}{\veps}
&=\liminf_{\veps\to0}\Vert\mb G_0(E+\I(\eta+\veps))\Vert_p+\frac{\eta(\Vert\mb G_0(E+\I(\eta+\veps))\Vert_p-\Vert\mb G_0(E+\I\eta)\Vert_p)}{\veps}\\
&\geq\Vert\mb G_0(E+\I\eta)\Vert_p-\lim_{\veps\to0}\eta\left\Vert\frac{\mb G_0(E+\I(\eta+\veps))-\mb G_0(E+\I\eta)}{\veps}\right\Vert_p\\
&=\Vert\mb G_0(E+\I\eta)\Vert_p-\eta\Vert\mb G_0(E+\I\eta)J\mb G_0(E+\I\eta)\Vert_p
\end{split}
\end{equation}
To obtain a bound for the last term we estimate 
\begin{equation}
\eta|\langle \mb x,\mb G_0J\mb G_0\mb y\rangle|
\leq\frac\eta2(\langle \mb x,\mb G_0J\mb G_0^*\mb x\rangle+\langle\mb y,\mb G_0^*J\mb G_0\mb y\rangle)
=\frac{1}{2}(\langle \mb x,\Im \mb G_0 \mb x\rangle+\langle \mb y,\Im \mb G_0 \mb y\rangle)
\end{equation}
for $\mb x,\mb y\in\C^{(l+1)N}$. In the last step, we used the Ward identity for generalized resolvents,
\begin{equation}
\mb G_0J\mb G_0=\frac{\Im \mb G_0}{\eta}.
\end{equation}
Thus we find
\begin{equation}
\eta\Vert \mb G_0J\mb G_0\Vert_p\leq\sup_{\Vert \mb x\Vert,\Vert \mb y\Vert=1}\left(\E \left(\frac{1}{2}(\langle \mb x,\Im \mb G_0 \mb x\rangle+\langle \mb y,\Im \mb G_0 \mb y\rangle)\right)^p\right)^\frac1p\leq \Vert \mb G_0\Vert_p,
\end{equation}
where we used 
\begin{equation}
|\langle \mb x,\Im \mb R\mb x\rangle|\leq|\langle \mb x, \mb R\mb x\rangle|
\end{equation}
in the last step. Therefore 
\begin{equation}
\liminf_{\veps\to0}\frac{f(\eta+\veps)-f(\eta)}{\veps}\geq0
\end{equation}
and the claim follows.

For $\delta=0,1$, assume the local law \eqref{eq boot strapping} on $\mathbb{D}_{\gamma_0}^{\kappa_0}$. Then $\Vert \mb G_\delta\Vert_{p}\sim_{p}1$ on $\mathbb{D}_{\gamma_0}^{\kappa_0}$ and by monotonicity of $\eta\Vert \mb G_\delta\Vert_p$ we find
\begin{equation}
\Vert \mb G_\delta\Vert_p\lesssim_{p}N^{\gamma_s}.
\end{equation}
on $\mathbb{D}_{\gamma_1}^{\kappa_0}$.
We choose $\gamma_s<\frac14$. Then Proposition~\ref{prop error scd} yields 
\begin{equation}
\Vert \mb D\Vert_p\lesssim_{p,\veps}N^{\veps+c\gamma_s}\sqrt{\frac{\Vert \mb G_\delta \mb G_\delta^*\Vert_q}{N}}
\quad
\text{and}
\quad
\Vert \mb D\Vert_p^{\mr{av}}\lesssim_{p,\veps}N^{\veps+c\gamma_s}\frac{\Vert\mb G_\delta\mb G_\delta^*\Vert_q}{N}.
\end{equation}
For $\delta=0$ estimate the quadratic term by making use of \eqref{eq ward substitution} and then use the Ward identity on $\mb G_1\mb G_1^*$ for both $\delta=0,1$ to obtain
\begin{equation}
\Vert \mb D\Vert_p\lesssim_{p,\veps}N^{\veps+c\gamma_s}\sqrt{\frac{\Vert \Im \mb G_1\Vert_{2q}}{\eta N}}\lesssim_p\frac{N^{\frac{c'}{2}\gamma_s}}{\sqrt{\eta N}}
\quad
\text{and}
\quad
\Vert\mb D\Vert_p^{\mr{av}}\lesssim_{p,\veps}N^{\veps+c\gamma_s}\frac{\Vert\Im\mb G_1\Vert_{2q}}{\eta N}
\lesssim_p\frac{N^{c'\gamma_s}}{\eta N}.
\label{Rough bound D}
\end{equation}
on $\mathbb{D}_{\gamma_1}^{\kappa_0}$ for sufficiently small $\veps>0$ and some modified $c'>0$. Note that we are allowed to exchange the $q$ norm in the bound for $\delta=1$ by a $2q$ norm as the norm is increasing in $q$. After using Lemma~\ref{lemma stoch dom} to turn the p-norm bound in the first equation of \eqref{Rough bound D} into a bound on the *-norm bound, we get from Lemma~\ref{lemma quad eq}
\begin{equation}
|\xi_1\Theta_\delta+\xi_2\Theta_\delta^2|\chi(\Vert \bm \Delta_\delta\Vert_*\leq N^{-\frac3K})\prec \frac{N^{\frac2K+c'\gamma_s}}{\eta N}
\end{equation}
on $\mathbb{D}_{\gamma_1}^{\kappa_0}.$
The left hand side is also Lipschitz continuous with Lipschitz constant $\prec \eta^{-2}\leq N^2$. Thus the inequality
\begin{equation}
    |\xi_1\Theta_\delta+\xi_2\Theta_\delta^2|\chi(\Vert\bm\Delta_\delta\Vert_*\leq N^{-\frac3K})\leq N^{-\frac{10}{K}}
\end{equation}
holds with very high probability on all of $\mathbb{D}_{\gamma_1}^{\kappa_0}$ for sufficiently large $K$ and sufficiently small $\gamma_s$. By our assumption the local law, \eqref{eq boot strapping}, holds on $\mathbb{D}_{\gamma_0}^{\kappa_0}$. In conjunction with another stochastic continuity argument, we also get the bound 
\begin{equation}
    |\Theta_\delta|\leq\min\left\{\frac{N^{-\frac{10}{K}}}{\sqrt{\kappa+\eta}},N^{-\frac5K}\right\}
\end{equation}
on all of $\mathbb{D}_{\gamma_0}^{\kappa_0}$ with very high probability. Therefore Lemma~\ref{lemma quad to lin} can be applied with $d=N^{-10/K}$ and we obtain
\begin{equation}
    |\Theta_\delta|\chi(\Vert\bm\Delta_\delta\Vert_*\leq N^{-\frac3K})\prec N^{-\frac5K}.
    \label{rough bound Theta}
\end{equation}
Next, we turn this into a bound on $\bm \Delta_\delta.$ To do so we again use the *-norm bound obtained from applying Lemma~\ref{lemma stoch dom} to \eqref{Rough bound D} to find $\Vert \mb D\Vert_*\prec N^{-\frac7K}$ for sufficiently small $\gamma_s$ and sufficiently large $K$. Using both bounds on \eqref{Eq Delta from Theta} we get
\begin{equation}
\Vert\bm\Delta_\delta\Vert_*\chi(\Vert\bm\Delta_\delta\Vert_*\leq N^{-\frac3K})
\lesssim\left(|\Theta_\delta|+N^{\frac{2}{K}}\Vert\mb D\Vert_*\right)\chi(\Vert\bm\Delta_\delta\Vert_*\leq N^{-\frac3K})
\prec N^{-\frac5K}.
\label{rough bound Delta}
\end{equation}
We have thus found a ``forbidden'' area for $\Vert\bm\Delta_\delta\Vert_*$. With the aid of a standard stochastic continuity argument, we remove the indicator function from the bounds \eqref{rough bound Theta} and \eqref{rough bound Delta} to get to the rough bounds
\begin{equation}
    \Vert\bm\Delta_\delta\Vert_*\prec N^{-\frac5K}
    \quad
    \text{and}
    \quad
    |\Theta_\delta|\prec N^{-\frac5K}.
\end{equation}
Since $\mb x$ and $\mb y$ were arbitrary we the first bound becomes $|\bm\Delta_\delta|\prec N^{-5/K}.$ Now, assume that $\max\{|\bm\Delta_0|,|\bm\Delta_1|\}\prec\Lambda$ and $\max\{|\Theta_0|,|\Theta_1|\}\prec \theta$ for some deterministic $\theta\leq\Lambda\leq N^{-3/K}$. Then we know from Lemma~\ref{lemma quad eq}  and Proposition~\ref{prop error scd} as well as \eqref{eq ward substitution} that
\begin{equation}
    \max_i\{|\bm\Delta_i|\}\prec \theta +N^{\frac2K}\sqrt{\frac{\Im m+\Lambda}{N\eta}}
    \quad
    \text{and}
    \quad
    \max_i\{|\xi_1\Theta_i+\xi_2\Theta_i^2|\}\prec N^{\frac{2}{K}}\frac{\Im m+\Lambda}{N\eta}
    \label{eq self impro}
\end{equation}
are also deterministic bounds. Here, we also used $\Im m\sim\langle\Im M_1\rangle$ by Corollary~\ref{cor m reg}, Lemma~\ref{lemma m delta asymptotics} as well as \eqref{sol DE}. The first bound in \eqref{eq self impro} self-improving and applying it iteratively yields 
\begin{equation}
    \max_i\{|\bm\Delta_i|\}\prec \theta +N^{\frac2K}\left(\frac{N^{\frac2K}}{N\eta}+\sqrt{\frac{\Im m+\theta}{N\eta}}\right)
\end{equation}
and therefore the second bound in \eqref{eq self impro} improves to
\begin{equation}
    \max_i\{|\xi_1\Theta_i+\xi_2\Theta_i^2|\}\prec N^{\frac{2}{K}}\frac{\Im m+\theta}{N\eta}+N^{\frac4K}\frac{1}{(N\eta)^2}.
\end{equation}
Now we separately treat $\Re z\in\supp(\rho)$ and $\Re z\notin\supp(\rho)$ and we start with the former. Then by Corollary~\ref{cor m reg} we know that $\Im m\sim\sqrt{\kappa+\eta}$. For fixed $\theta$ we apply Lemma~\ref{lemma quad to lin} with 
\begin{equation}
    d=N^{\frac2K}\frac{\sqrt{\kappa+\eta}+\theta}{N\eta}+N^{\frac4K}\frac{1}{(N\eta)^2}
\end{equation}
to obtain
\begin{equation}
    \max_i\{|\Theta_i|\}\prec\min\left\{\frac{d}{\sqrt{\kappa+\eta}},\sqrt{d}\right\}.
\end{equation}
This is also a self-improving bound and iterating it gives
\begin{equation}
    \max_i\{|\Theta_i|\}\prec N^{\frac2K}\frac{1}{N\eta},
    \quad
    \text{hence}
    \quad
    \max_i\{\bm|\Delta_i|\}\prec N^{\frac2K}\left\{\sqrt{\frac{\Im m}{N\eta}}+\frac{N^{\frac2K}}{N\eta}\right\}.
\end{equation}
For $\Re z\notin\supp(\rho)$ we have $\Im m\sim \frac{\eta}{\sqrt{\kappa+\eta}}$, again by Corollary~\ref{cor m reg}. Analogously to the $\Re z\in\supp(\rho)$ we obtain
\begin{equation}
    \max_i\{|\Theta_i|\}\prec N^{\frac2K}\frac{1}{N(\eta+\kappa)}+N^\frac4K\frac{1}{(N\eta)^2\sqrt{\kappa+\eta}}.
\end{equation}
Finally we use \eqref{Eq Delta from Theta}, \eqref{bound error av} and the bounds on $\max_i\{|\Theta_i|\}$ to arrive at the averaged bounds
\begin{equation}
    \max_i\{|\bm\Delta_i|_{\mr{av}}\}\prec
    N^{\frac2K}
    \begin{cases}
    \frac{1}{N\eta}\text{ if } \Re z\in\mr{supp}{\rho_1}\\
    \frac{1}{N(\eta+\kappa)}+\frac{N^{\frac2K}}{N^2\eta^2(\eta+\kappa)^{\frac12}}\text{ if } \Re z\notin\mr{supp}{\rho_1}.
    \end{cases}
\end{equation}
This concludes the proof of \eqref{eq boot strapping} on $\mathbb D_{\gamma_1}^{\kappa_0}$.

Now, assume \eqref{eq boot strapping away} on $\mathbb G_{\gamma_0}^{C,\eta_0}$. From \eqref{eq stab operator} we have
\begin{equation}
    \IL[\mb \Delta_\delta]=-M_\delta\mb D+M_\delta\mc S[\mb \Delta_\delta]\mb\Delta_\delta.
    \label{IL of Delta}
\end{equation}
We apply $\IL^{-1}$ to the equation and take its *-norm for some deterministic $\mb x,$ $\mb y$ to find
\begin{equation}
\begin{split}
   \Vert \mb \Delta_\delta\Vert_*
   &\leq\Vert\IL^{-1}[M_\delta\mb D]\Vert_*+\Vert\IL^{-1}[M_\delta\mc S[\mb \Delta_\delta]\mb \Delta_\delta]\Vert_*\\
   &\leq\Vert\IL^{-1}\Vert_{*\to*}\left(\Vert M_\delta\mb D\Vert_*+\Vert M_\delta\mc S[\mb \Delta_\delta]\mb \Delta_\delta\Vert_*\right)\\
   &\lesssim_C N^{\frac2K}\Vert \mb D\Vert_*+N^{\frac2K}\Vert \mb \Delta_\delta\Vert_*^2.
   \label{Delta * est}
\end{split}
\end{equation}
Here, $\Vert\IL^{-1}\Vert_{*\to*}$ denotes the operator norm of $\IL^{-1}$ with respect to the *-norm.
In the last estimate, we have used $\Vert\IL^{-1}\Vert_{*\to*}\lesssim\Vert\IL^{-1}\Vert_{\mr{sp}}\lesssim_C1$ by \cite[Equation~(70c)]{ERDOS_2019SCD} and Proposition~\ref{prop stab} as well as $\Vert M_\delta\mb R\Vert_*\lesssim N^{2/K}\Vert \mb R\Vert_*$ and $\Vert M_\delta\mc S[\mb R]\mb R\Vert_*\lesssim N^{2/K}\Vert \mb R\Vert_*^2$ by \cite[Equation~(70a) and (70b)]{ERDOS_2019SCD}.
Equation \eqref{rough bound Delta} also holds on $\mathbb G_{\gamma_1}^{C,\eta_0}$ by the same argument as in the case $z\in \mathbb{D}_{\gamma_1}^{\kappa_0}$
 and for sufficiently large $K$ and sufficiently small $\gamma_s$ we have in particular $\Vert \mb D\Vert_*\prec N^{-\frac7K}$ for $\delta\in\{0,1\}$. We use this bound on with \eqref{Delta * est} to estimate
\begin{equation}
    \Vert \mb \Delta_\delta\Vert_*\chi(\Vert \mb \Delta_\delta\Vert_*\leq N^{-\frac3K})
    \prec_C N^{-\frac5K}.
\end{equation}
By a stochastic continuity argument, we establish the rough bound
\begin{equation}
    \Vert \mb \Delta_\delta\Vert_*
    \prec_C N^{-\frac5K}
    \quad
    \text{and therefore}
    \quad
    |\mb \Delta_\delta|\prec_C N^{-\frac5K}
\end{equation}
since $\mb x$ and $\mb y$ were arbitrary. Now, assume $\max_i\{|\bm\Delta_i|\}\prec_C\Lambda$ for some deterministic $\Lambda\leq N^{-3/K}$. Then we have another deterministic bound in the form of
\begin{equation}
    \max_i\{|\bm\Delta_i|\}\prec_C N^{\frac{2}{K}}\sqrt{\frac{\Im m +\Lambda}{N\eta}}.
\end{equation}
Iterating this self-improving bound and using $\Im m\sim_C\eta$ from Lemma~\ref{lemma m away from rho} we find 
\begin{equation}
    \max_i\{|\bm\Delta_i|\}\prec_C\frac{N^{\frac4K}}{N\eta}+N^{\frac2K}\frac{1}{\sqrt{N}}.
    \label{iso bound away}
\end{equation}
Finally from \eqref{iso bound away}, \eqref{IL of Delta} and \eqref{bound error av} we obtain the averaged bound
\begin{equation}
    \max_i\{|\mb \Delta_i|_{\mr{av}}\}\prec_C\frac{N^{\frac4K}}{(N\eta)^2}+\frac1N.
\end{equation}
Therefore we have shown that \eqref{eq boot strapping away} holds on $\mathbb G_{\gamma_1}^{C,\eta_0}$.
\end{proof}

\begin{proof}[Proof of Propositions~\ref{prop LLL} and \ref{prop LLL away}]
For all $K>0$ Equation~\eqref{eq boot strapping} holds on $\mathbb D_1^{\kappa_0}$ for some $\kappa_0$ and \eqref{eq boot strapping away} on $\mathbb G_1^{C,\eta_0}$ for all $C>0$ and some $\eta_0$ depending on $C$ due to the global law Proposition~\ref{prop global law}.
The local laws, Proposition~\ref{prop LLL} and Proposition~\ref{prop LLL away}, follow immediately from applying Proposition~\ref{prop bootstrapping} finitely many times in the respective domains.
\end{proof}
This concludes the proof of Theorem~\ref{thm LL} and Proposition~\ref{prop LL away} and we are left with proving their Corollaries, \ref{cor delocalization} and \ref{cor rigidity}.

\begin{proof}[Proof of Corollary~\ref{cor delocalization}]
Let $\lambda_i$, $i\in\llbracket N\rrbracket$, denote the eigenvalues of $q(\mb X)$ with eigenvectors $\mb v_i$. let $\mb x\in\C^{N\times N}$ be a deterministic and normalized vector and let $\mb v$ be an eigenvector of $q(\mb X)$ with eigenvalue $\lambda$ such that $|\lambda-\tau_0|<\kappa_0$ for $\kappa_0$ from Theorem~\ref{thm LL}. Evaluating $\mb g$ at spectral parameter $\lambda+\I\eta$ have with very high probability that
\begin{equation}
    1\gtrsim\Im\langle \mb x,\mb g(\lambda+\I\eta)\mb x\rangle
    =\sum_{i=1}^N\eta\frac{\eta}{\eta^2+(\lambda-\lambda_i)^2}|\langle \mb x,\mb v_i\rangle|^2
    \geq \frac{|\langle \mb x,\mb v_i\rangle|}{\eta}
\end{equation}
for all $\eta\geq N^{-1+\veps}$ and all $\veps>0$. As the deterministic vector $\mb x$ was arbitrary, the eigenvalue delocalization, \eqref{eq deloc}, follows.
\end{proof}

\begin{proof}[Proof of Corollary~\ref{cor rigidity}]
Let $q$ have regular edge at $\tau_0$. W.l.o.g. we can assume that $\tau_0=\tau_+$ is a right edge. Otherwise, consider the right edge of $-q$.
From \eqref{eq ll imp} and Proposition~\ref{prop LL away} it follows that
\begin{equation}
    \Pb\left(|\langle \mb B(\mb g-m)\rangle|>\Vert \mb B\Vert N^\veps\left(\frac{1}{N(\kappa+\eta)}+\frac{1}{(N\eta)^2\sqrt{\kappa+\eta}}\right)\right)
    \lesssim_{\veps,\gamma,D,C}N^{-D}
\end{equation}
for all deterministic $\mb B\in\C^{N\times N}$ and $z\in\mathbb{D}_\gamma^{\kappa_0}\cup\mathbb G_\gamma^{C,\eta_0}$ with $E\notin\supp(\rho)$. In particular, this holds true for $C\geq\kappa_0^{-1}$. Following \cite[Chapter 11.1]{Erdos_Yau_2017}, this implies for all $\veps>0$ that with very high probability there are no eigenvalues $\lambda$ such that 
$\lambda\notin\supp(\rho)$ and $N^{-2/3+\veps}\leq\lambda-\tau_+\leq C$, i.e.
\begin{equation}
    \Pb\left(\exists \lambda\in\Spec(q(\mb X)) \, : \,  N^{-\frac23+\veps}\leq\lambda-\tau_+<C
    \right)
    \lesssim_{\veps,D,C} N^{-D}.
    \label{eq no eigenvalues 1}
\end{equation}
Additionally, it follows from the trivial bound \eqref{trivial bound} that there is a $K>0$ such that 
\begin{equation}
    \Pb\left(\Vert q(\mb X)\Vert\geq K
    \right)
    \lesssim_{D} N^{-D}.
    \label{eq no eigenvalues 2}
\end{equation}
Combining \eqref{eq no eigenvalues 1} for some $C>0$ such that $C+\tau_0\geq K$ and \eqref{eq no eigenvalues 2} we have
\begin{equation}
    \Pb\left(\exists \lambda\in\Spec(q(\mb X)) \, : \, \lambda-\tau_+\geq N^{-\frac23+\veps}
    \right)
    \lesssim_{\veps,D} N^{-D}.
    \label{eq no eigenvalues 3}
\end{equation}
By a standard argument, \eqref{eq no eigenvalues 3} in conjunction with the averaged local law, \eqref{eq ll av}, implies eigenvalue rigidity around the edge as in Theorem~\ref{cor rigidity} (see e.g. \cite[Chapter 11.2-11.4]{Erdos_Yau_2017}).
\end{proof}

\appendix
\section{Appendix}
\subsection{The entrywise real part of a Hermitian matrix.}
Let $n\in\N$ and $H\in\C^{n\times n}$ be a Hermitian matrix and $\widehat{H}=\frac12(H+H^t)$ be its entrywise real part. The following lemma summarizes how the two matrices are related.
\begin{lemma}
    Let $h_i$ and $\widehat{h}_i$ be the eigenvalues of $H$ and $\widehat H$ respectively, both arranged in non-increasing order. The following holds true
    \begin{enumerate}
        \item $h_1\geq\widehat{h}_1$ and $h_n\leq\widehat{h}_n$.
        \item $\Vert H\Vert\geq\Vert\widehat{H}\Vert$.
        \item If $H\geq0$, then $\widehat H\geq0$ and conversely $H\leq0$ implies $\widehat H\leq0$.
        \item Let additionally $\rank H=1$. Then we have $\rank\widehat H=1$ if and only if $H\in\R^{l\times l}$. Otherwise we have $\rank \widehat{H}=2$.
    \end{enumerate}
    \label{lemma hat A}
\end{lemma}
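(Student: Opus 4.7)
The starting observation is that since $H$ is Hermitian, $H^t = \overline{H}$, so $\widehat{H} = \tfrac{1}{2}(H + \overline{H})$ is precisely the entrywise real part of $H$. Writing $H = A + \I B$ with $A \in \R^{n\times n}$ symmetric and $B \in \R^{n\times n}$ antisymmetric, we have $\widehat{H} = A$, which in particular is a real symmetric matrix with real eigenvectors.

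For part 1, I would use the variational characterization of eigenvalues. Pick a real unit eigenvector $v \in \R^n$ of $\widehat{H}$ corresponding to $\widehat{h}_1$. Since $\langle v, Hv\rangle$ is a scalar, $\langle v, Hv\rangle = (v^t H v)^t = v^t H^t v = \langle v, H^t v\rangle$, where we used that $v$ is real. Averaging gives $\langle v, \widehat{H} v\rangle = \langle v, H v\rangle$, so
\[
\widehat{h}_1 = \langle v, \widehat{H}v\rangle = \langle v, Hv\rangle \leq \sup_{\Vert w\Vert=1} \langle w, Hw\rangle = h_1,
\]
and the same argument applied to $-H$ yields $\widehat{h}_n \geq h_n$. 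Part 2 is immediate: $\Vert H\Vert = \max\{h_1, -h_n\} \geq \max\{\widehat{h}_1, -\widehat{h}_n\} = \Vert \widehat{H}\Vert$. Part 3 is also a direct consequence: if $H \geq 0$ then $h_n \geq 0$, so $\widehat{h}_n \geq 0$ and $\widehat{H} \geq 0$, with the analogous statement for $H \leq 0$.

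For part 4, write $H = \lambda v v^*$ for some $\lambda \in \R\setminus\{0\}$ and unit $v \in \C^n$, and split $v = a + \I b$ with $a, b \in \R^n$. A direct computation of entries gives
\[
\Re(v_i \overline{v_j}) = a_i a_j + b_i b_j,
\]
so $\widehat{H} = \lambda(a a^t + b b^t)$. The rank of $a a^t + b b^t$ equals the real dimension of $\Span_{\R}\{a,b\}$. If $H \in \R^{n\times n}$, then every $v_i \overline{v_j}$ is real, which forces all nonzero $v_i$ to share a common phase, i.e.\ $v = e^{\I\phi} u$ for some $u \in \R^n$. In that case $b = (\tan\phi) a$ (if $\cos\phi \neq 0$; otherwise $a=0$), so $a$ and $b$ are $\R$-linearly dependent and $\rank \widehat{H} = 1$. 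Conversely, if $v$ is not of that form, then $a, b$ are $\R$-linearly independent, giving $\rank \widehat{H} = 2$.

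The routine content is in part 4, where the only care needed is in handling the global phase; parts 1--3 are essentially a single Rayleigh-quotient observation. I do not anticipate a genuine obstacle.
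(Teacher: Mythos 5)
Your proposal is correct. The only place where it genuinely diverges from the paper is part 1: the paper proves $h_1\geq\widehat h_1$ by shifting both matrices by a large constant $a$ so that the top eigenvalue equals the operator norm, and then compares $\Vert (H+a)v_1\Vert$ with $\Vert(\widehat H+a)v_1\Vert$ via the Pythagorean identity $\Vert(H+a)v_1\Vert^2=\Vert(\widehat H+a)v_1\Vert^2+\Vert\widetilde H v_1\Vert^2$, which relies on $(\widehat H+a)v_1$ being real and $\widetilde Hv_1$ purely imaginary. You instead use the Rayleigh-quotient characterization directly: for a real test vector the quadratic forms of $H$ and $\widehat H$ coincide (the antisymmetric part contributes nothing), so evaluating at a real eigenvector of $\widehat H$ immediately gives $\widehat h_1\leq h_1$, and applying this to $-H$ gives the other inequality. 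Your route is shorter and avoids the shift and the norm comparison altogether, at no loss of generality; parts 2 and 3 are then deduced exactly as in the paper. For part 4 the two arguments are essentially equivalent: the paper works with $\widehat H=\frac{\alpha}{2}(vv^*+\bar v\bar v^*)$ and the linear (in)dependence of $v$ and $\bar v$, while you work with $\widehat H=\lambda(aa^t+bb^t)$, $a=\Re v$, $b=\Im v$, and the $\R$-linear (in)dependence of $a$ and $b$; these are the same dichotomy. One small point worth making explicit in your part 4 is the observation that $v=e^{\I\phi}u$ with $u\in\R^n$ is equivalent to $H\in\R^{n\times n}$ (if $v=e^{\I\phi}u$ then $H=\lambda uu^t$ is real), which is what lets you convert your two implications into the stated ``if and only if''; the paper leaves this equally implicit.
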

\begin{proof}
Let $\widetilde H=\frac1{2\I}(H-H^t)$ be the entrywise imaginary part of $H,$ i.e. $H=\widehat{H}+\I\widetilde H$.
First, we will prove $h_1\geq\widehat{h}_1$. Note that for all $a\in\R$ the norms of $H+a$ and $\widehat{H}+a$ are given by $\Vert H+a\Vert=\max\{h_1+a,-h_n-a\}$ and $\Vert \widehat{H}+a\Vert=\max\{\widehat{h}_1+a,-\widehat{h}_n-a\}$. Thus for sufficiently large $a\in\R$ (depending on $H$) we have $\Vert H+a\Vert=h_1+a$ and $\Vert \widehat{H}+a\Vert=\widehat{h}_1+a$. For such an $a$ let $v_1\in\R^n$ be a normalized eigenvector of $\widehat H+a$ corresponding to the eigenvalue $\widehat{h}_1+a$ (it can be chosen purely real since $\widehat{H}$ is a real symmetric matrix). Then
\begin{equation}
h_1+a=\Vert H+a\Vert\geq\Vert (H+a)v_1\Vert=\sqrt{\Vert (\widehat{H}+a)v_1\Vert^2+\Vert\widetilde{H}v_1\Vert^2}\geq\Vert (\widehat{H}+a)v_1\Vert=\widehat{h}_1+a.
\end{equation}
Here the second equality holds since $\widetilde H v_1$ is a purely imaginary vector. The claim $h_1\geq\widehat{h}_1$ follows. Choosing $a$ sufficiently small, we find $h_n\leq\widehat{h}_n$ by a similar argument. Since $\Vert H\Vert=\max\{h_1,-h_n\}$ and $\Vert \widehat{H}\Vert=\max\{\widehat{h}_1,-\widehat{h}_n\}$, the inequality $\Vert H\Vert\geq\Vert \widehat{H}\Vert$ follows. Next, let $H\geq0$. Then $h_n\geq0$ and thus $\widehat{h}_n\geq h_n\geq0$. The inequality $\widehat H\geq0$ follows. Similarly, $H\leq0$ implies $\widehat H\leq0$.

Now let $\rank H=1$. Then $\rank \widehat H=1$ if $H\in\R^{n\times n}$ is clear. Let on the other hand $H\in\C^{n\times n}\setminus\R^{n\times n}$. Then $H=\alpha v v^*$ for some $\alpha\in\R\setminus\{0\}$ and normalized $ v\in\C^n\setminus e^{\I\phi}\R^n$ for all $\phi\in\R$. Then $\widehat H=\frac{\alpha}{2}( v v^*+\bar{ v}\bar{ v}^*)$ and since $ v$ and $\bar{ v}$ are only linearly dependent if $ v\in e^{\I\phi}\R^n$ the claim $\rank \widehat{H}=2$ follows.
\end{proof}

\subsection{Matrix inequality}
Here we provide a proof for the matrix inequality used in \eqref{Eq G_0 est}.
\begin{lemma}
    Let $R,T\in\C^{n\times n}$ be arbitrary matrices. Then the following inequality holds:
    \begin{equation}
        (R+T)^*(R+T)\leq 2(R^*R+T^*T).
    \end{equation}
    \label{Lemma Matrix ineq}
\end{lemma}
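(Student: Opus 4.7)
The plan is to prove this by expanding the left-hand side and comparing it term-by-term with the right-hand side. Starting from the identity
\begin{equation*}
(R+T)^*(R+T)=R^*R+T^*T+(R^*T+T^*R),
\end{equation*}
the claim reduces to showing that the cross term $R^*T+T^*R$ is bounded above (in the sense of Hermitian matrices) by $R^*R+T^*T$.

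The key observation is that $(R-T)^*(R-T)$ is positive semi-definite, since it is of the form $S^*S$ for $S=R-T$. Expanding this product yields
\begin{equation*}
0\leq (R-T)^*(R-T)=R^*R+T^*T-(R^*T+T^*R),
\end{equation*}
which immediately gives the desired estimate $R^*T+T^*R\leq R^*R+T^*T$. Adding $R^*R+T^*T$ to both sides and combining with the initial identity completes the argument.

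There is essentially no obstacle here; this is a direct two-line manipulation using only the positivity of $S^*S$ for any matrix $S$. The plan is simply to write out the two displayed identities above and conclude.
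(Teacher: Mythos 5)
Your proof is correct. It follows the same basic reduction as the paper's proof, namely expanding $(R+T)^*(R+T)$ and bounding the cross term $R^*T+T^*R$ by $R^*R+T^*T$, but your mechanism for that bound is slightly different: you invoke the positivity of $(R-T)^*(R-T)$ directly at the matrix level, whereas the paper tests the Hermitian matrix $R^*T+T^*R$ against an arbitrary vector $v$ and estimates $2\Re\langle Rv,Tv\rangle\leq 2\Vert Rv\Vert\Vert Tv\Vert\leq\Vert Rv\Vert^2+\Vert Tv\Vert^2$ via Cauchy--Schwarz and the arithmetic--geometric mean inequality. The two arguments are equivalent (the paper's vector-level estimate is just $\Vert (R-T)v\Vert^2\geq0$ unwound), and your version is, if anything, a bit more streamlined since it dispenses with Cauchy--Schwarz altogether; nothing is missing.
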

\begin{proof}
We first note that
\begin{equation}
    (R+T)^*(R+T)=R^*R+T^*T+R^*T+T^*R
\end{equation}
and it is thus sufficient to bound the last two terms. Let $v\in\C^n$ be arbitrary. Since $R^*T+T^*R$ is Hermitian, $\langle v,(R^*T+T^*R)v\rangle\in\R$ and we estimate
\begin{equation}
    \langle v,(R^*T+T^*R)v\rangle=2\Re(\langle Rv,Tv\rangle)\leq2\Vert Rv\Vert\Vert Tv\Vert\leq\Vert Rv\Vert^2+\Vert Tv\Vert^2=\langle v,(R^*R+T^*T)v\rangle,
\end{equation}
where we used the Cauchy-Schwarz inequality in the second step. As $v$ was arbitrary, $R^*T+R^*T\leq R^*R+T^*T$ follows and we conclude the lemma.
\end{proof}

\subsection{Some properties of the solution of the Dyson Equation\label{Appendix DE}}
\begin{proof}[Proof of Proposition~\ref{prop SCE}]
    There can at most be one analytic function in the upper half-plane that satisfies \eqref{scalar equation} and $\lim_{z\to\infty}zm(z)=-1$ since \eqref{scalar equation} is stable at infinity. Thus we are left with proving existence.
    First, consider the case of $A$ being invertible. Let $\{s_1,\ldots,s_l\}$ be a family of free semi-circular variables in a $C^*$ probability space $(\IS,\tau)$ (see \cite[Appendix~B]{erdos2019local}) and let $s:=(s_i)_{i\in\llbracket l\rrbracket}$. Define
    \begin{equation}
        \mb L_{\mr{sc}}=K_0\otimes\mathbb1+\sum_{j=1}^lK_j\otimes s_j.
        \label{linearization SC}
    \end{equation}
    Following the proof of \cite[Lemma~2.6]{erdos2019local}, the matrix-valued function $M_0:\HH\to\C^{(l+1)\times(l+1)},$ given by
    \begin{equation}
        M_0(z)=(\mr{id}\otimes\tau)(\mb L_{\mr{sc}}-zJ\otimes\mathbb 1)^{-1},
    \end{equation}
    is a solution to \eqref{DE} for $\delta=0$. Thus its (1,1) entry satisfies \eqref{scalar eq delta} for $\delta=0$ and therefore \eqref{scalar equation}. Using the Schur complement formula, we find
    \begin{equation}
        (M_0(z))_{11}=\tau(q(s)-z\otimes\mathbb1)^{-1}.
    \end{equation}
    The polynomial $q(s)$, defined in \eqref{eq polynomial} is self-adjoint, thus $(M_0)_{11}$ is analytic on $\HH$, has positive imaginary part and $\lim_{z\to\infty}z(M_0(z))_{11}=-1$. Therefore $m:=(M_0)_{11}$ is the unique function that satisfies all conditions of Proposition~\ref{prop SCE}.
    
    Now, let $A$ be non-invertible. Then there is a $u\sim1$ such that $A^\veps:=A+\veps$ is invertible for all $0<\veps<u.$ Let $q^\veps$ and $\gamma^\veps$ be the objects given by replacing $A$ with $A+\veps$ in the definitions of $q$ in \eqref{eq polynomial} and $\gamma$ in \eqref{eq gamma}, respectively. By the above argument, the function 
    \begin{equation}
        m^\veps(z):=\tau(q^\veps(s)-z\otimes\mathbb1)^{-1}
    \end{equation}
    is a solution to the equation
    \begin{equation}
        -\frac{1}{m^\veps}=z+\gamma^\veps(m^\veps).
        \label{scalar eq epsilon}
    \end{equation}
    For any fixed $z\in\HH$, both $m^\veps$ and $\gamma^\veps$ are continuous in $\veps$ at $\veps=0$. Thus $m:=m^0$ solves \eqref{scalar equation} and $m$ is analytic on $\HH$, has positive imaginary part and satisfies  $\lim_{z\to\infty}zm(z)=-1$. Therefore it is the unique function that satisfies all conditions of Proposition~\ref{prop SCE}.
\end{proof}
The function $m$ is a Stieltjes transform of a real-valued probability measure $\rho$ with compact support. As such it can be analytically extended to $\C\setminus\supp(\rho).$ The following lemma summarizes the properties of the extension, also called $m$, on and above the real axis outside of the spectrum
\begin{lemma}
    For the analytical extension of $m$ to $\C\setminus\supp(\rho)$ we have the following.
    \begin{enumerate}
        \item The function $m$ is real-valued on $\R\setminus\supp(\rho)$ and $m'(E)>0$ for all $E\in\R\setminus\supp(\rho)$.
        \item For all $C>0$ there is an $\eta_0>0$ such that for all $z=E+\I\eta$ with $C^{-1}\leq \dist(E,\supp(\rho))\leq C$ and $0<\eta\leq \eta_0$ we have $\Im m\sim_C\eta.$
    \end{enumerate}
    \label{lemma m away from rho}
\end{lemma}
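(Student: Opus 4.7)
The plan is to deduce both statements directly from the Stieltjes transform representation
\begin{equation*}
m(z)=\int_\R\frac{\rho(\dd x)}{x-z},
\end{equation*}
which is available from Proposition~\ref{prop SCE}, together with the fact that $\rho$ is a probability measure whose support is compact (in fact a single compact interval by \cite[Theorem~1.1~(3)]{Shlyakhtenko_2015}). Neither part requires the scalar equation \eqref{scalar equation}; they are structural consequences of being a Stieltjes transform of such a measure, and hence both extend automatically to $\C\setminus\supp(\rho)$ via the same formula.

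For Part~1, I would fix $E\in\R\setminus\supp(\rho)$. Since $\dist(E,\supp(\rho))>0$, the integrand $x\mapsto (x-E)^{-1}$ is smooth and bounded on $\supp(\rho)$, so $m(E)$ is real and we may differentiate under the integral sign to obtain
\begin{equation*}
m'(E)=\int_\R\frac{\rho(\dd x)}{(x-E)^2}.
\end{equation*}
As $\rho$ is a probability measure (total mass one) and the integrand is strictly positive on $\supp(\rho)$, this integral is strictly positive, which settles Part~1.

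For Part~2, taking imaginary parts in the Stieltjes representation gives
\begin{equation*}
\Im m(E+\I\eta)=\eta\int_\R\frac{\rho(\dd x)}{(x-E)^2+\eta^2}.
\end{equation*}
Given $C>0$, I would set $\eta_0:=C^{-1}/2$ (say) and restrict to $z=E+\I\eta$ with $C^{-1}\le\dist(E,\supp(\rho))\le C$ and $0<\eta\le\eta_0$. For the upper bound, the distance condition yields $(x-E)^2\ge C^{-2}$ for every $x\in\supp(\rho)$, so the integrand is bounded above by $C^2/(1+\eta^2 C^2)\le C^2$, giving $\Im m\le C^2\eta$. For the lower bound, because $\supp(\rho)$ is compact and $E$ stays within distance $C$ of it, $(x-E)^2$ is uniformly bounded above by some $K=K(C,\diam\supp(\rho))$ for $x\in\supp(\rho)$; combined with $\eta\le\eta_0\le 1$ this yields $\Im m\ge \eta/(K+1)$. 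Combining the two estimates gives $\Im m\sim_C \eta$ uniformly on the prescribed region.

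The argument is essentially routine once one invokes the Stieltjes representation and compactness of $\supp(\rho)$; there is no real obstacle. The only mild care required is in specifying how $\eta_0$ depends on $C$ for the upper bound and making the compactness quantitative enough for the lower bound, both of which are immediate from the assumed bounds on $\dist(E,\supp(\rho))$.
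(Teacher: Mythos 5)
Your proposal is correct. Part~1 is exactly the paper's argument: reality of $m(E)$ off the support and $m'(E)=\int_\R\rho(\dd x)/(x-E)^2>0$ follow directly from the representation \eqref{eq m as trafo}. For Part~2 you take a genuinely different (and arguably cleaner) route: the paper writes $\Im m(E+\I\eta)=\eta m'(E)+\mc O(\eta^2)$ and then uses that $m'$ is bounded above and away from zero on the compact set $C^{-1}\leq\dist(E,\supp(\rho))\leq C$, so that $\eta_0$ must be taken small enough for the quadratic error not to spoil the comparison; you instead take the imaginary part inside the integral, $\Im m=\eta\int_\R\rho(\dd x)/((x-E)^2+\eta^2)$, and bound the kernel two-sidedly using $\dist(E,\supp(\rho))\geq C^{-1}$ for the upper bound and compactness of $\supp(\rho)$ (so $(x-E)^2\leq K(C)$) for the lower bound. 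This gives explicit uniform constants with no expansion or continuity argument, which is a small gain in robustness; the paper's version is marginally shorter because it recycles \eqref{derivative m}. One cosmetic slip: your choice $\eta_0:=C^{-1}/2$ need not satisfy $\eta_0\leq1$ when $C<1/2$, which you then use in the lower bound; simply take $\eta_0:=\min\{1,C^{-1}/2\}$ (or just $\eta_0=1$, since neither of your estimates actually needs $\eta_0$ to shrink with $C$) and the argument is complete.
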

\begin{proof}
    The fact that $m(E)\in\R$ for all $\R\setminus\supp(\rho)$ follows immediately for from \eqref{eq m as trafo} and taking the derivative of \eqref{eq m as trafo} we find
    \begin{equation}
        m'(E)=\int_\R\frac{\rho(\dd x)}{(x-E)^2}>0.
        \label{derivative m}
    \end{equation}
    Using $m(E)\in\R$ we have for $z\in\HH$
    \begin{equation}
        \Im m(z)=\Im(m(z)-m(E))=\eta m'(E)+\mc O(\eta^2).
    \end{equation}
    By \eqref{derivative m} and continuity of the derivative, we have that $m'(E)$ is bounded from above and bounded away from 0 on all compact subsets of $\R\setminus\supp(\rho)$. Therefore we have $m'(E)\sim_C1$ for all $C>0$ and $E$ such that $C^{-1}\leq \dist(E,\supp(\rho))\leq C$.
\end{proof}

\begin{proof}[Proof of Lemma~\ref{lemma m delta asymptotics}]
    The function $m$ is a Stieltjes transform and as such analytically extendable to $\C\setminus\supp(\rho)$. Since, by assumption, $\rho$ has a regular edge at $\tau_\pm$ it can also be continuously extended to $\tau_\pm$. For $x\in(\R\setminus\supp(\rho))\cup\{\tau_\pm\}$ we denote this extension by $m(x).$ By continuity it satisfies
    \begin{equation}
        -\frac{1}{m(x)}=x+\gamma(m(x)).
        \label{scalar eq on R}
    \end{equation}
    Consider \eqref{scalar eq delta} at $z\in\HH$ as a perturbation to \eqref{scalar eq on R}. More precisely, consider the following equation in the unknown function $\widetilde m_\delta(z)$,
    \begin{equation}
        \frac{1}{m(x)}-\frac1{\widetilde{m}_\delta(z)}=\gamma_\delta(\widetilde{m}_\delta(z))-\gamma_0(m(x))+(z-x),
        \label{eq m diff}
    \end{equation}
    where we have used $\gamma_0=\gamma.$
    Define by $\dot\gamma_\delta(y):=\partial_t\gamma_{\frac{t}{\I\eta}}(y)|_{t=\I\delta\eta}$ the partial derivative with respect to $\I\delta\eta$ and by $\gamma_\delta'(y):=\partial_y \gamma_\delta(y)$ the derivative with respect to its variable.

    Expanding \eqref{eq m diff} in $m_\delta(z)-m(x)$ and $\I\delta\eta$ we find 
\begin{equation}
\begin{split}
    &\left(\frac{1}{m(x)^2}-\gamma_0'(m(x))\right)(\widetilde{m}_\delta(z)-m(x))-\left(\frac{1}{m(x)^3}+\frac{1}{2}\gamma_0''(m(x))\right)(\widetilde{m}_\delta(z)-m(x))^2\\
    & \qquad\qquad =\I\dot\gamma_0(m(x))\delta\eta+(z-x)+\mc O(|\widetilde{m}_\delta(z)-m(x)|^3+|\widetilde{m}_\delta(z)-m(x)|\eta+\eta^2).
    \label{expansion m delta}
\end{split}
\end{equation}
We identify $\frac{1}{m^2}-\gamma_0'(m)=h(m)$ and $-\frac{2}{m^3}-\gamma_0''(m)=h'(m)$ with $h$ defined in \eqref{function h}. Now let $x=\tau_\pm$ and note that $m(\tau_\pm)=m_\pm.$ By Lemma~\ref{lemma h non-red} as well as Lemma~\ref{lemma h red} we have 
\begin{equation}
    h(m_\pm)=0,
    \quad
    |h'(m_\pm)|\sim1
    \quad
    \text{and}
    \quad
    \pm h'(m_\pm)>0.
    \label{h(z) asymptotics}
\end{equation}
Furthermore one finds $\dot\gamma_0(y)>0$ for all $y\in\R\setminus\IS(\gamma)$. Thus \eqref{expansion m delta} constitutes an approximate quadratic equation in $\widetilde{m}_\delta(z)-m_\pm$ and there are $u, u_1\sim1$ such that for all $z\in\HH$ with $0<|z-\tau_\pm|\leq u$, there are exactly two solutions $\widetilde{m}_\delta(z)$ with $|\widetilde{m}_\delta(z)-m_\pm|\leq u_1$. One of them has positive imaginary part and one of them has negative imaginary part. Combining \eqref{expansion m delta} and \eqref{h(z) asymptotics}, we find that the unique solution with positive imaginary part satisfies both parts of \eqref{eq m_delta asymptotics} and we denote it by $m_\delta$.

Now let $C>0$, $x=E=\Re z$ and $C^{-1}<\dist(E,\supp(\rho))<C$. Then we have 
\begin{equation}
    h(m(E))=(m'(E))^{-1}\sim_C1
    \label{eq h away from rho}
\end{equation}
by Lemma~\ref{lemma m away from rho} and the fact that $C^{-1}<\dist(E,\supp(\rho))<C$ constitutes a compact subset of $\R\setminus\supp(\rho)).$ Therefore \eqref{expansion m delta} is an approximate linear equation in $\widetilde{m}_\delta(z)-m(E)$. Hence there are $u,u_1\sim1$ such that for all $0<|z-E|=\eta\leq u$ there is a unique solution $\widetilde{m}_\delta(z)$ with $|\widetilde{m}_\delta(z)-m(E)|\leq u_1$ to \eqref{expansion m delta}. We denote this solution by $m_\delta$ and combining \eqref{h(z) asymptotics} with \eqref{eq h away from rho} we find that it satisfies both parts of \eqref{eq m_delta away from rho}.

From now on let $A$ be invertible and
\begin{equation}
    M_\delta(z)=(\mr{id}\otimes\tau)(\mb L_{\mr{sc}}-(zJ+\delta\eta(I-J))\otimes\mathbb 1)^{-1},
\end{equation}
where $\mb L_{\mr{sc}}$ was defined in \eqref{linearization SC}. For all $\eta>0$, the imaginary part of $zJ+\delta\eta(I-J)$ is positive. Therefore, by Lemma~\cite[Lemma~5.4]{haagerup2005}, the function $M_\delta$ is the unique solution to \eqref{DE} with $\Im M_\delta(z)>0$. In particular, this implies that its (1,1) entry satisfies \eqref{scalar eq delta}. By the Schur complement formula, it is given by
\begin{equation}
    (M_\delta(z))_{11}=\tau(q_\delta(s)-z\otimes\mathbb1)^{-1},
\end{equation}
where $q_\delta$ denotes the polynomial $q$, defined in \eqref{eq polynomial}, with $A$ replaced by $A_\delta$. For $E\notin\supp(\rho)$ consider
\begin{equation}
\begin{split}
    (M_\delta)_{11}-m
    &=\tau((q_\delta(s)-z\otimes\mathbb1)^{-1}-(q_0(s)-z\otimes\mathbb1)^{-1})\\
    &=\tau((q_\delta(s)-z\otimes\mathbb1)^{-1}(q_0(s)-q_\delta(s))(q_0(s)-z\otimes\mathbb1)^{-1}).
    \label{eq factorization m diff}
\end{split}
\end{equation}
The middle factor satisfies
\begin{equation}
    \Vert q_0(s)-q_\delta(s)\Vert=\left\Vert\sum_{i,j=1}^l(A-A_\delta)s_is_j\right\Vert\lesssim\eta\delta. 
    \label{bound factor 1}
\end{equation}
Since $E\notin\supp(\rho)$, the term $q_0(s)-E$ is invertible and by continuity in $\eta$ we have 
\begin{equation}
    \Vert (q_0(s)-z\otimes\mathbb1)^{-1}\Vert\lesssim_E 1
    \label{bound factor 2}
\end{equation}
uniformly in $\eta$ for $0<\eta\leq1$. In particular, for $\eta$ sufficiently small (depending on $E$), we have
\begin{equation}
    \Vert(q_0(s)-z\otimes\mathbb1)^{-1}(q_0(s)-q_\delta(s))\Vert\leq\frac12.
\end{equation}
Therefore the first factor in \eqref{eq factorization m diff} factorizes further into
\begin{equation}
    (q_\delta(s)-z\otimes\mathbb1)^{-1}=(1\otimes\mathbb1 +(q_0(s)-z\otimes\mathbb1)^{-1}(q_0(s)-q_\delta(s)))^{-1}(q_0(s)-z\otimes\mathbb1)^{-1}
    \label{bound factor 3}
\end{equation}
and
\begin{equation}
    \|(1\otimes\mathbb1 +(q_0(s)-z\otimes\mathbb1)^{-1}(q_0(s)-q_\delta(s)))^{-1} \| \leq 2.
    \label{bound factor 4}
\end{equation}
Combining \eqref{eq factorization m diff} with \eqref{bound factor 1},\eqref{bound factor 2},\eqref{bound factor 3} and \eqref{bound factor 4}, we find 
\begin{equation}
    |(M_\delta)_{11}-m|\lesssim_E\delta\eta.
    \label{eq diff M delta m}
\end{equation}
As shown in the first part of the proof, there are $u,u_1>0$ such that \eqref{eq m diff} has a unique solution $m_\delta$ with positive imaginary part and $|m_\delta(z)-m_\pm|\leq u_1$ for all $z\in\HH$ such that $|z-\tau_\pm|\leq u$. For such $u,u_1$ we choose a $u_2$ with $0<u_2<u$ such that $|m_z-m_\pm|\leq\frac{u_1}{2}$ for all $z\in\HH$ with $|z-\tau_\pm|\leq u_2$ (such a $u_2$ exists by Corollary~\ref{cor m reg}). Fix $E\notin \supp(\rho)$ with $|E-\tau_\pm|<u_2$. We choose $\eta=\Im z$ sufficiently small such that $|z-\tau_\pm|<u_2$ and $|(M_\delta)_{11}-m|\lesssim\frac{u_1}{2}$ (which is possible by \eqref{eq diff M delta m}). Thus there is a $z\in\HH$ with $|z-\tau_\pm|\leq u$ such that
\begin{equation}
    |(M_\delta)_{11}(z)-m_\pm|\leq|(M_\delta)_{11}(z)-m(z)|+|m(z)-m_\pm|\leq u_1.
\end{equation}
Since this condition is unique among solutions to \eqref{scalar eq delta} with positive imaginary part, we must have $(M_\delta)_{11}(z)=m_\delta(z)$ for some $z\in\HH$ with $|z-\tau_\pm|\leq u$. By continuity of both $m_\delta$ and $(M_\delta)_{11}$ we must have $(M_\delta)_{11}(z)=m_\delta(z)$ for all $z\in\HH$ with $|z-\tau_\pm|\leq u$.

For $C>0$ and $E$ with $C^{-1}<\dist(E,\supp(\rho))<C$ we find similarly from Lemma~\ref{lemma m away from rho} and \eqref{eq diff M delta m} that for all $u_1>0$ there is an $\eta>0$ such that
\begin{equation}
    |(M_\delta)_{11}(z)-m(E)|\leq|(M_\delta)_{11}(z)-m(z)|+|m(z)-m(E)|\leq u_1.
\end{equation}
Thus $(M_\delta)_{11}(z)$ must be the unique solution to \eqref{scalar eq delta} that satisfies \eqref{eq m_delta away from rho}, which concludes the proof of the lemma.
\end{proof}

\bibliographystyle{plain}
\bibliography{ReferencesPolynorm}

\end{document}